\documentclass{article}
%
\usepackage{graphics}
\usepackage{amssymb}
\usepackage{amsmath}
\usepackage{color,soul}
\usepackage{enumerate}
\usepackage{mathrsfs}
\usepackage{tikz}
\usetikzlibrary{matrix}
\usetikzlibrary{arrows}
\usetikzlibrary{shapes}
\usepackage{enumitem}

\usepackage[affil-it]{authblk}

\usepackage{amsthm}
\newtheorem{thm}{Theorem}
\numberwithin{thm}{section}
\newtheorem{lem}[thm]{Lemma}
\newtheorem{defn}[thm]{Definition}
\newtheorem{cor}[thm]{Corollary}
\newtheorem{prop}[thm]{Proposition}
\newtheorem{remk}[thm]{Remark}

\newtheorem{claim}[thm]{Claim}
\let\oldremark\remk
\renewcommand{\remk}{\oldremark\normalfont}
\let\olddefn\defn
\renewcommand{\defn}{\olddefn\normalfont}

\def \conf {{\mathrm{Conf}}}
\def \harm {{\mathcal{H}}}
\newcommand{\cev}[1]{\overleftarrow{#1}}

\begin{document}

\title{Thermodynamic Formalism for Transient Potential Functions}
\author{Ofer Shwartz
\thanks{ofer.shwartz@weizmann.ac.il}
}                     

\affil{Faculty of Mathematics and Computer Science,\\ The Weizmann Institute of Science
}

\maketitle
\begin{abstract}
We study the thermodynamic formalism of locally compact Markov shifts with transient potential functions. In particular, we show that the Ruelle operator admits positive continuous eigenfunctions and positive Radon eigenmeasures in forms of Martin kernels. These eigenmeasures can be characterized in terms of the direction of escape to infinity of their orbits, when viewed inside a suitable Martin-like compactification of the underlying shift space. We relate these results to first-order phase transitions in one-dimensional lattice gas models with infinite set of states. This work complements earlier works by Sarig \cite{sarig_1999,sarig_2001} who focused on the recurrent scenario.

\end{abstract}
\section{Introduction}
The main tool in the study of Thermodynamic Formalism for topological Markov shifts (or shortly TMS, see definition in  Section \ref{section:preliminaries}) is the Ruelle operator
$$ (L_\phi f)(x) = \sum_{y:Ty=x}e^{\phi(y)}f(y)$$
and in particular its eigenfunctions and eigenmeasures, see for example  
\cite{bowen_1975,sarig_1999,mauldin_2001,stratmann_2007,ruelle_2004,lanford_1969,aaronson_1993,baladi_2000}. 

For a topologically transitive one-sided TMS $(X^{+},T)$ with finite set of states $S$ and a H\"older continuous function $\phi:X^+\rightarrow\mathbb{R}$, Ruelle \cite{ruelle_2004} and Bowen \cite{bowen_1975}   showed that there is a positive continuous eigenfunction $L_{\phi }h = \lambda h$ and a positive  eigenmeasure $L_\phi^*\nu=\lambda \nu$ with $\nu(h)=1 $ and $\log \lambda$ is the pressure of $\phi$. The eigenvectors $
\nu, h$ are unique up to scaling and the measure $\mu = h \nu$ is the unique equilibrium state which maximizes $h_\mu(T) + \mu(\phi)$.
The eigenmeasures of the Ruelle operator are also called \textit{conformal measures} and  their Jacobian $\frac{d\nu}{d\nu \circ T}$ is $\lambda^{-1}\exp \phi$. 

For a topologically transitive TMS with countable number of states, $|S|=\infty$, Sarig \cite{sarig_2001} showed that the behaviour of a H\"older continuous potential function $\phi$ with finite Gurevich pressure can be characterized either as  positive recurrent, null recurrent or transient. If $\phi$ is positive recurrent, then the situation is similar to the finite case: the eigenfunction, eigenmeasure and the equilibrium state exist and are unique if $X^+$ is topologically mixing \cite{buzzi_2003}. If $\phi$ is null recurrent, $h$ and $\nu$ still exist and unique but now  $hd\nu$ is an infinite conservative measure, which makes the discussion on entropy and equilibrium states more subtle. 

As for  transient $\phi$, for a locally compact $X^+$, Cyr \cite{cyr_2010} showed the existence of a totally-dissipative eigenmeasure, as  a weak$^*$-converging sub-sequence of
$$\mu_k(A) := \frac{\sum_{n=0}^\infty L_\phi^n 1_A(T^kx)}{\sum_{n=0}^\infty L_\phi^n 1_{[o]}(T^kx)} $$
where $T^kx\xrightarrow[k\rightarrow\infty]{} \infty$ (escapes every compact set) and $o\in S$ is  arbitrary. 
Later on, motivated by the study of KMS states,  Thomsen had  analyzed the class of conformal measures of Markovian potentials on countable-states Markov shifts, and produced examples where the dependence of this class on the potential exhibits a phase transition, see  \cite{thomsen_2016} and also \cite{thomsen_2017}.
  Stadlbauer  used the Patterson-Sullivan approach to construct eigenmeasures and eigenfunctions for a class of topological Markov shifts obtained from countable group extensions of topological Markov shifts with the big images and pre-images property, see \cite{stadlbauer_2017}.  For more on the Thermodynamic Formalism of a transient potential function, see \cite{iommi_2013}.

The purpose of this paper is to complete the analysis on the eigenmeasures and the eigenfunctions in the transient case. For a locally compact topologically transitive Markov shift equipped with a $\lambda$-transient potential function (see definitions in Section \ref{section:preliminaries}), we show the following:
\begin{enumerate}
\item 
\underline{Existence of eigenvectors:} There exist a positive Radon measure $\mu$ and a positive continuous function $h$ s.t. $L_\phi^* \mu = \lambda \mu$ and $L_\phi h = \lambda h$. The existence of $\mu$ was shown before by Cyr \cite{cyr_2010}, but the existence of $h$ is, as far as we know, new.
Examples show that $\mu$ or $h$ need not be unique. 
\item
\underline{Representation of eigenvectors:} We extend Martin's representation theorem \cite{martin_1941} to the context of Ruelle operator. Specifically, we construct a compactification $\widehat{X^+}$ of $X^+$ with boundary $\mathcal{M}=\widehat{X^+} \setminus X^+ $ and construct a kernel $K(f,\omega|\lambda)$ ($f\in C_c^+(X^+), \omega \in \widehat{X^+}$) s.t. every Radon measure $\mu$ with $L_\phi^*\mu = \lambda \mu$ has the form $$\mu(\cdot) = \int_{\mathcal{M}}K(\cdot, \omega|\lambda)d\nu(\omega)$$ for some finite measure $\nu$ on $\mathcal{M}$. We provide a similar representation for positive eigenfunctions using a compactification of the negative one-sided Markov shift $X^-$ (see definitions in Section \ref{section:harmonic_functions}). \item
\underline{Direction of escape to $\infty$:} We show that every extremal $\lambda$-eigenmeasure can be represented by $\nu \propto \delta_\omega$, for some $\omega\in \mathcal{M}$. For this $\omega$, $$T^nx\xrightarrow[n\rightarrow\infty]{}\omega\quad  \mu\text{-a.e.}$$
so the extremal eigenmeasures are characterized by the almost-sure direction of escape to infinity of their orbits. 
\item
\underline{Duality:} We show that positive $\lambda$-eigenfunctions with uniformly continuous logarithm can be canonically identified with $\lambda$-eigenmeasures for a ``reversed" Ruelle operator on the  negative one-sided Markov shift $X^-$. This duality is valid for the recurrent case as well.
\item
\underline{Representation of dominated eigenfunctions}: We show that any $\lambda$-eigenfunction $f$ which is dominated by a positive $\lambda$-eigenfunction $h$ is fully
characterized by an almost-everywhere bounded function $\varphi$ on a suitable Poisson boundary and that $\varphi$ describes the limiting behaviour of $\frac{f}{h}$. 
\item
\underline{First-order phase transitions}:
We apply the main results of this work to the study of Gibbs states and first-order phase transitions. In particular, we show that a phase transition occurs when the Martin boundary has more than a single point and provide an analogues interpretation of a thermodynamic limit in the transient case.  
\end{enumerate}

As in \cite{cyr_2010,thomsen_2016}, our  approach is motivated by the theory of the Martin boundary for random walks. Recall that for a transient random walk on a  locally finite graph, one can represent every positive harmonic function in terms of  Martin kernels and show that the walk almost surely converges  to a boundary point, see Section \ref{section:random_walk}. We emphasize that unlike in the probabilistic settings, where a compactification of the set of states $S$ is considered, our proposed compactification is of the space of paths $X^+$. For an alternative approach of a compactification of Markov shifts, see \cite{fiebig_1995,fiebig_2013}.

\subsection{Topological Markov shifts, Ruelle operator and transience}
\label{section:preliminaries}
Let $S$ be an infinite countable set of states and let $\mathbb{A}\in \{0,1\}^{S\times S}$ be a transition matrix over $S$. For a subset  $A \subseteq \mathbb{Z}$ and a vector $x\in S^{A}$, we denote by $(x)_i$ the $i$-th coordinate of $x$. 

The \textit{(positive) one-sided topological Markov shift} (TMS) is the space
$$X^{+} =\{x\in S^{\mathbb{N}\cup \{0\}} : \mathbb{A}_{(x)_i, (x)_{i+1} }= 1, \forall i\geq 0\} $$
with the transformation $T:X^{+}\rightarrow X^{+}$, $(T(x))_i = (x)_{i+1}$ and the  metric
\begin{equation}
\label{eq:def_d}
d(x,y) = 2^{-\inf\{i\geq 0:(x)_i\neq (y)_i\}}.
\end{equation}

If $\sum_{b}\mathbb{A}_{a,b}<\infty$ for every $a\in S$, then the space $(X^{+},d)$ is locally compact  and all \textit{cylinder} sets
\begin{equation}
\label{eq:def_cylinders}
[a_0,\dots, a_m] := \{x\in X^{+} : (x)_i = a_i, 0\leq i \leq m\}
\end{equation}
are compact. A word $(a_0, \dots, a_m)\in S^{m+1}$ is called \textit{admissible} if $[a_0,\dots, a_m]\neq \varnothing$. 
We say that a TMS $X^{+}$ is \textit{topologically transitive}, or simply \textit{transitive}, if for every $a,b\in S$ there exists $n$ s.t. $T^{-n}[a] \cap [b] \neq \varnothing$. 
For every state $a\in S$ on a transitive TMS $X^{+}$, we arbitrarily pick $x_a\in T^{}[a]$. 

For two states $a,b\in S$ let 
$d_{graph}(a,b)$ be the directed-graph distance from  $a$ to $b$,
$$ d_{graph}(a,b) := \inf\{n\geq 0:[a] \cap T^{-n}[b]\}.$$
Observe that $d_{graph}$ is not necessarily a metric.
 
For  numbers $r_1,r_2,c\in \mathbb{R}^+$, we write $r_1 =e^{ \pm c} r_2$ if
$ e^{-c}r_2 \leq r_1 \leq e^{c} r_2$. We write $o_n(1)$ for a small quantity, converging to zero as $n$ tends to infinity.

 We denote by $C_c(X^{+})$  the space of all continuous functions from $X^+$ to $\mathbb{R}$ with compact support, by $C^+(X^{+})$  the space of all non-negative continuous functions and by $C^+_c(X^{+})=C^+(X^{+}) \cap C_c(X^{+})$ the space of all non-negative continuous functions with compact support.  

The $m$-th \textit{variation} of a  function $\phi:X^{+}\rightarrow\mathbb{R} $ is 
\begin{equation}
\label{eq:def_var} Var_m(\phi) = \inf\{|\phi(x) - \phi(y)| : x,y\in X^{+}, (x)_i=(y)_i, 0\leq i< m-1\}.
\end{equation}
Henceforth we always assume that  $\phi$  has \textit{summable variations}:   $\sum_{m\geq 2} Var_m(\phi) <\infty$. Notice that this is satisfied by all Markovian potential functions, $\phi(x) = \phi((x)_0, (x)_1)$, and by all H\"older-continuous functions as well.  

\begin{defn}
 The \textit{Ruelle operator} $L_\phi$ evaluated at a function $f\in C(X^{+})$ and a point $x\in X^+$ is
$$(L_\phi f)(x) = \sum_{y:Ty=x}e^{\phi(y)}f(y). $$
\end{defn}
Then, 
$$(L_\phi^nf)(x)=\sum_{y:T^{n}y=x}e^{\phi_{n}(y)}f(y) $$ 
where $\phi_n(x) = \sum_{i=0}^{n-1}\phi(T^ix)$. Observe that when $X^+$ is locally compact,  $(L_\phi^n f)(x)<\infty $ for all $n\geq 0,f\in C_c(X^{+})$ and $x\in X^+$.

The \textit{Gurevich pressure} of $\phi$ is defined to be the limit
$$P_G(\phi) = \limsup_{n\rightarrow\infty}\frac{1}{n}\log \sum_{T^n x=x}e^{\phi_n(x)}1_{[a]}(x) $$
for some $a\in S$ and $x\in X^{+}$. It can be shown that if $(X^{+},T)$ is topologically mixing then the limit exists and independent of  the choice
of $a$, see \cite{sarig_1999}. 
\begin{defn}The \textit{Green's function} for a function $f\in C^+(X^+)$ and  $\lambda>0$ evaluated at a point $x\in X^+$ is (the possibly divergent) sum
$$ G(f,x|\lambda) = \sum_{n=0}^\infty \lambda^{-n}\left(L_\phi^nf\right)(x).$$
\end{defn}

We now introduce the notion of $\lambda$-recurrence and $\lambda$-transience, for an arbitrary $\lambda \in (0,\infty)$.
\begin{defn}
A potential function $\phi$ with summable variations on a transitive one-sided TMS $(X^+,T)$ is called \textit{$\lambda$-recurrent} if for some (or every)  $f\in C_c^{+}(X)\setminus\{0\}$ and $x\in X^{+}$, $G(f,x|\lambda)=\infty$ and is called \textit{$\lambda$-transient} if for some (or every) $f\in C_c^{+}(X^{+})\setminus\{0\}$ and $x\in X^{+}$, $G(f,x|\lambda)<\infty$. 
\end{defn}
Notice that  $\phi$ is $\lambda$-recurrent for all $\lambda < \exp P_G(\phi)$  and $\lambda$-transient for all $\lambda > \exp P_G(\phi)$. 

 In this paper we adapt the notion of $\lambda$-recurrence and $\lambda$-transience as it appears in  common probability literature. In \cite{sarig_1999}, the term recurrent or transient is actually interpreted as $\exp (P_G(\phi))$-recurrent or $\exp(P_G(\phi))$-transient, with finite $P_G(\phi)$. To obtain more general results, we considered arbitrary value of $\lambda$, rather than the specific but important value  $\lambda = \exp(P_G(\phi))$. If $\phi$ is $\lambda$-transient  with $\lambda=1$, we simply say that $\phi$ is \textit{transient}. Then, we write $G(f,x) = G(f,x|1)$.

Recall that a measure $\mu$ is called a \textit{\textit{Radon} measure} if it is a Borel measure which is finite on compacts.
For two positive, possibly infinite, Radon measures $\mu$ and $\nu$ on $X^+$, we write $\mu\leq \nu$  if $\mu(K)\leq \nu(K)$, for every compact set $K\subseteq X^+$. We say that a measure $\mu$ is \textit{non-singular} if $\mu \circ T^{-1} \sim \mu$ i.e. for every measurable set $E$, $\mu(E)=0 \Leftrightarrow \mu(T^{-1}E)=0$. Recall that a sequence of Borel measures $\mu_n$ converges to a measure $\mu$ in the \textit{weak$^*$ topology} if for every $f\in C_c(X^+)$, $\mu_n(f)\xrightarrow[n\rightarrow\infty]{} \mu(f)$. 

For two Radon measures $\mu_1$ and $\mu_2$, we write $\mu_1 \propto \mu_2$ if  the two measures are proportional, i.e. there exists $c\in \mathbb{R}$ s.t. $\mu_1 = c \mu_2$. We use the same notation for functions as well: $f_1 \propto  f_2$ means that there is $c\in \mathbb{R}$ s.t. $f_1 = c f_2$. 
\begin{defn}A positive Radon measure $\mu$ on $X^+$ is called  \textit{$\lambda$-conformal} (or simply conformal) if $L_\phi^*\mu=\lambda\mu$, i.e. $(L_\phi^*\mu)(1_{K}) = \lambda\mu(1_{K})$ for every compact set $K\subseteq X^{+}$. Similarly,  $\mu$ is called  \textit{$\lambda$-excessive}  if $L_\phi^* \mu \leq \lambda\mu$.  We denote by $\conf(\lambda)$ the space of all positive Radon $\lambda$-conformal measures. If $X^+$ is transitive and $\mu \neq 0$ is conformal then for every state $a\in S$, $\mu([a])>0$; see Lemma \ref{lemma:positive_on_cylinders} in the appendix. 
\end{defn}

\subsection{Standing assumptions}
In this work, we make the following assumptions:
\begin{enumerate}[label={(A\arabic*)}]
\item
$(X^{+},T)$ is topologically transitive with $|S|=\infty$.
\item
\label{assumption:locally_compact}
For every $a\in S$, $\sum_{b}\mathbb{A}_{a,b}<\infty$ (equivalently, $X^+$ is locally compact). \item
$\phi$ has summable variations,  $\sum_{k=2}^\infty Var_k(\phi) < \infty.$
\item 
\label{assumption:finite_pressure}
There is $\lambda >0$ s.t. $\phi$ is $\lambda$-transient, i.e. $\sum \lambda^{-n} L_\phi ^n f(x) <\infty$ for every $f\in C_c^+(X^+)$ and $x\in X^+$ (equivalently for some $0\neq f \in C_c^+(X)$ and some $x\in X^+$). 
\end{enumerate} 
Assumption \ref{assumption:locally_compact} is crucial.
See \cite{cyr_2010} for an example of a non locally-compact Markov shift and a potential function with no conformal measures. Though assumption   \ref{assumption:locally_compact} excludes several interesting models, it is satisfied by  the symbolic models for non-uniformly hyperbolic diffeomorphisms in \cite{sarig_2013,ovadia_2016}.
By definition, assumption \ref{assumption:finite_pressure} is equivalent to the assumption that $P_G(\phi)<\infty$.

Observe that for all $\lambda>0$, 
$$L_{(\phi-\log \lambda)}^* \mu = \mu \text{ iff } L_\phi^* \mu = \lambda \mu.$$
Hence in order to study the $\lambda$-conformal measures of $\phi$ one can study the $1$-conformal measures of $\phi - \log\lambda $. We use this reduction frequently. 

In Section \ref{section:harmonic_functions} we shall consider two-sided and negative one-sided topological Markov shifts. The following assumption is essential to ensure that these topological Markov shifts are locally compact;
\begin{enumerate}[label={(A\arabic*)}]
\setcounter{enumi}{4}
 \item 
 \label{assumption:locally_compact_two_sides}
For every $b\in S$, $\sum_{a}\mathbb{A}_{a,b}<\infty$. 
\end{enumerate}

\section{Martin boundary and the existence of eigenmeasures}
\label{section:construction}
In this section we construct a special compactification of $X^+$ and use it to show the following result of Cyr \cite{cyr_2010}:
\begin{thm}
\label{thm:conf_existence}
Under assumptions (A1)-(A4), if $\sum_{n\geq 0}\lambda^{-n}(L_\phi^n f)(x)<\infty$ for some $\lambda>0, f\in C_c^+(X^+)$ and $x\in X^+$, then there exists a positive Radon measure $\mu$ s.t. $L_\phi^* \mu = \lambda \mu$,
\end{thm}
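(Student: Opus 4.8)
The plan is to realise the conformal measure as a weak$^*$ limit of normalised Green's measures, mimicking the construction of the Green's/Martin kernel for transient Markov chains. First I would reduce to $\lambda=1$: since $L^n_{\phi-\log\lambda}=\lambda^{-n}L_\phi^n$, the hypothesis says exactly that $\phi-\log\lambda$ is transient, i.e. $G(f,x)=\sum_n L^n_{\phi-\log\lambda}f(x)<\infty$ for all $f\in C_c^+(X^+)$ and $x\in X^+$, while $L^*_{\phi-\log\lambda}\mu=\mu$ iff $L_\phi^*\mu=\lambda\mu$, and $\phi-\log\lambda$ still has summable variations. So I replace $\phi$ by $\phi-\log\lambda$ and seek a $1$-conformal measure. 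For each $x$ the map $f\mapsto G(f,x)$ is a positive linear functional on $C_c(X^+)$, finite by transience, hence by the Riesz representation theorem it is given by a Radon measure $G_x$. A one-line telescoping computation yields the fundamental excessive identity $L_\phi^* G_x=G_x-\delta_x$, where $\delta_x$ is the unit mass at $x$; thus $G_x$ fails to be conformal only through a defect concentrated at the single point $x$.

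The strategy is then to push the defect off to infinity. I fix a reference state $o$ and, since $|S|=\infty$, choose a sequence $x_k$ whose zeroth coordinates are pairwise distinct, so that $x_k\to\infty$ (every compact set meets only finitely many $1$-cylinders). By transitivity there is an admissible path from $o$ to $(x_k)_0$, so $G_{x_k}([o])\in(0,\infty)$, and I set $\mu_k:=G_{x_k}/G_{x_k}([o])$, normalised so $\mu_k([o])=1$. Dividing the excessive identity gives $L_\phi^*\mu_k=\mu_k-G_{x_k}([o])^{-1}\delta_{x_k}$. I would then extract a weak$^*$ (vaguely) convergent subsequence $\mu_{k_j}\to\mu$; for this I need the $\mu_k$ to be uniformly bounded on compact sets, which is the heart of the argument.

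The main obstacle is this uniform bound, a Harnack-type inequality: for every state $a$ there should be $C_a<\infty$ with $G_x([a])\le C_a\,G_x([o])$ for all $x$. I would prove it by path surgery. Writing $G_x([a])$ as a sum of $e^{\phi_n}$ over all finite admissible words starting at $a$ and ending adjacent to $(x)_0$, I fix once and for all an admissible word $\sigma$ realising a path $o\to a$ and prepend it to each such word $w$; the map $w\mapsto \sigma w$ is injective into the words counted by $G_x([o])$. Because $\sigma w$ shifted past $\sigma$ is exactly $w$, the Birkhoff sum splits as $\phi_{|\sigma|+|w|}(\sigma w.x)=\phi_{|w|}(w.x)+\phi_{|\sigma|}(\sigma w.x)$, and the summable-variation assumption (bounded distortion on the cylinder $[\sigma]$) confines the prefix term $\phi_{|\sigma|}(\sigma w.x)$ between two constants depending only on $\sigma$. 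Hence $e^{\phi_{|\sigma|+|w|}(\sigma w.x)}\ge \delta_\sigma\,e^{\phi_{|w|}(w.x)}$ with $\delta_\sigma>0$ independent of $w$ and $x$, and summing gives $G_x([o])\ge \delta_\sigma\,G_x([a])$, i.e. $C_a=\delta_\sigma^{-1}$. Since any compact $K$ is covered by finitely many $1$-cylinders $[a_1],\dots,[a_m]$, this yields $\mu_k(K)\le\sum_i C_{a_i}$ uniformly in $k$, exactly the boundedness needed for vague compactness.

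It remains to verify that the limit $\mu$ is nonzero and conformal. As $[o]$ is compact and open, $1_{[o]}\in C_c(X^+)$, so $\mu([o])=\lim_j\mu_{k_j}([o])=1$ and $\mu\neq0$. For conformality I test against $f\in C_c^+(X^+)$: since $L_\phi f\in C_c(X^+)$ (its support lies in the compact set $T(\mathrm{supp}\,f)$), vague convergence gives $\mu(L_\phi f)=\lim_j\mu_{k_j}(L_\phi f)$. Using $\mu_{k_j}(L_\phi f)=(L_\phi^*\mu_{k_j})(f)=\mu_{k_j}(f)-G_{x_{k_j}}([o])^{-1}f(x_{k_j})$ and noting $f(x_{k_j})=0$ once $x_{k_j}$ leaves the support of $f$, the defect term drops out in the limit, so $(L_\phi^*\mu)(f)=\mu(L_\phi f)=\mu(f)$. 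As $f\in C_c^+(X^+)$ was arbitrary, $L_\phi^*\mu=\mu$, which translates back to $L_\phi^*\mu=\lambda\mu$ for the original potential.
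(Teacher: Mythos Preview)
Your proof is correct and is essentially the paper's own argument, stripped of the compactification packaging. Your normalised measures $\mu_k(\cdot)=G_{x_k}(\cdot)/G_{x_k}([o])$ are exactly the Martin kernels $K(\cdot,x_k)$ of the paper, your Harnack estimate $G_x([a])\le C_a G_x([o])$ is Lemma~\ref{lemma:kernel_boundness} (proved there by the same path-prepending and summable-variation distortion bound), and your vague limit along $x_k\to\infty$ is precisely the measure $\mu_\omega=K(\cdot,\omega)$ attached to a boundary point $\omega\in\mathcal M$. The only difference is organisational: the paper introduces the metric $\rho$ and the Martin compactification $\widehat{X^+}$ so that \emph{every} escaping sequence has subsequential limits indexed by boundary points, a structure it needs for the later representation and convergence theorems; for the bare existence statement your direct extraction of a single vague limit is entirely sufficient and slightly leaner.
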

In the following two sections we will use this compactification to describe all conformal measures $\mu$ and to describe the asymptotic behaviour of $T^nx$ as $n\rightarrow\infty$ for $\mu$-typical $x\in X^+$. 

The construction is motivated by the well-known Martin compactification of transient Markov chains, but differs from it by at least one important aspect: we compactify the space of infinite paths $X^+$ and not the set of states $S$. See  \cite{sawyer_1997,woess_2000} for a detailed exposition of the probabilistic Martin boundary.

Recall the definition of Green's function; $G(f,x|\lambda) = \sum_{n\geq 0}\lambda^{-n}L_\phi^n f(x)$.

\begin{defn} Let $o\in S$ be an arbitrary ``origin state''. The \textit{Martin kernel} $K:C_c(X^{+})\times X^{+} \rightarrow\mathbb{R}^+$ is 
$$  K(f,x|\lambda) := \frac{  G(f,x|\lambda)}{  G(1_{[o]},x|\lambda)}. $$
\end{defn}
Later on we show that the choice of the origin state is not crucial (see Corollary \ref{cor:ind_of_ref_fucntion}). We write $  K(f,x)=  K(f,x|1)$.

When $X^+$ is locally compact and  $\phi$ has summable variations, the Green's function and the Martin kernel\ are continuous, see Lemma \ref{lemma:G_continuous} in the appendix.

To construct the compactification, we  introduce a new metric $\rho$ on $X^{+}$, which coincides with  the convergence according to the original metric $d$ and for which the Martin kernels $   K(f,x|\lambda)$ are $\rho$-continuous  for every $f\in C_c(X^{+})$.  The Martin boundary  is then the set of all new  points in the completion of $X^{+}$ w.r.t. $\rho$. 

We start by showing that for every fixed $f\in C_c(X^{+})$,  $  K(f,x|\lambda)$ is  bounded in $x$. 
\begin{lem}
\label{lemma:kernel_boundness}
Let $(X^{+},T)$ be a transitive locally compact one-sided TMS and let $\phi$ be a $\lambda$-transient potential function with summable variations. Then, for every $ a,b\in S$,  there exist $0<c_{a,b} \leq C_{a,b}$ s.t.
\begin{equation}
\label{eq:green_func_bound} c_{a,b}   G(1_{[b]},x|\lambda)\leq   G(1_{[ a]},x|\lambda) \leq C_{ a,b}   G(1_{[b]},x|\lambda),\quad \forall x\in X^{+}.
\end{equation}
\end{lem}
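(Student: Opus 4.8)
The plan is to read $G(1_{[a]},x|\lambda)$ as a weighted sum over admissible paths that start at the symbol $a$ and arrive at the point $x$, and to obtain the comparison with $G(1_{[b]},x|\lambda)$ by prepending a fixed connecting word. Explicitly,
$$G(1_{[a]},x|\lambda)=\sum_{n\geq 0}\lambda^{-n}\!\!\sum_{\substack{y\in[a]\\ T^ny=x}}\!\!e^{\phi_n(y)},$$
so each term is indexed by a pair $(n,y)$ with $(y)_0=a$ and $(y)_{n+i}=(x)_i$. Since $X^+$ is transitive, $k:=d_{graph}(b,a)<\infty$, so there is an admissible word $w=(b,w_1,\dots,w_{k-1})$ of length $k$ for which $(w_{k-1},a)$ is an admissible transition. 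For each $(n,y)$ as above I would form the concatenation $y':=w\cdot y\in[b]$; then $y'$ is admissible, $T^ky'=y$ and $T^{n+k}y'=x$. The assignment $(n,y)\mapsto(n+k,y')$ is injective, because $k$ is fixed and $y$ is recovered as $T^ky'$, and its image is a subset of the index set of $G(1_{[b]},x|\lambda)$.

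Next I would track the change of weight under this map. Writing $\phi_{n+k}(y')=\phi_k(y')+\phi_n(T^ky')=\phi_k(y')+\phi_n(y)$, the only new factor is $e^{\phi_k(y')}$, and every $y'$ in the image shares the fixed prefix $(b,w_1,\dots,w_{k-1},a)$ of length $k+1$. This is exactly where summable variations enters: for any two points $\eta,\eta'$ in the cylinder $[b,w_1,\dots,w_{k-1},a]$ one has $|\phi_k(\eta)-\phi_k(\eta')|\leq\sum_{i=0}^{k-1}Var_{k+1-i}(\phi)\leq\sum_{m\geq2}Var_m(\phi)<\infty$, so $c_w:=\inf\{e^{\phi_k(\eta)}:\eta\in[b,w_1,\dots,w_{k-1},a]\}>0$. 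Hence $\lambda^{-(n+k)}e^{\phi_{n+k}(y')}\geq\lambda^{-k}c_w\cdot\lambda^{-n}e^{\phi_n(y)}$ for every term, and summing over $(n,y)$ and discarding the terms of $G(1_{[b]},x|\lambda)$ lying outside the image (all terms being non-negative) gives $G(1_{[b]},x|\lambda)\geq\lambda^{-k}c_w\,G(1_{[a]},x|\lambda)$, i.e. the upper bound with $C_{a,b}=\lambda^{k}c_w^{-1}$.

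Finally, exchanging the roles of $a$ and $b$ and repeating the construction with a connecting word from $a$ to $b$ of length $k'=d_{graph}(a,b)<\infty$ yields $G(1_{[a]},x|\lambda)\geq\lambda^{-k'}c_{w'}\,G(1_{[b]},x|\lambda)$, the lower bound with $c_{a,b}=\lambda^{-k'}c_{w'}>0$; transitivity guarantees $G(1_{[b]},x|\lambda)>0$ for every $x$ (there is always a path from $b$ to $x$), so $c_{a,b}\leq C_{a,b}$ is then automatic. The estimates hold for every $x$ and do not actually use finiteness of the Green's functions, $\lambda$-transience only ensuring that the compared quantities are finite. I expect the only genuinely delicate step to be the uniform lower bound $c_w>0$ for $e^{\phi_k}$ over the prefix cylinder; everything else is the bookkeeping of the path-prepending injection together with the summation of non-negative terms.
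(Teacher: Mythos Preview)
Your argument is correct and is essentially the same as the paper's: both prepend a fixed connecting word from $b$ to $a$ of length $d_{graph}(b,a)$, use summable variations to bound the Birkhoff sum over the fixed prefix uniformly, and then swap the roles of $a$ and $b$ for the reverse inequality. The only cosmetic difference is that the paper phrases the comparison through $G(1_{[b]},x|\lambda)\geq\lambda^{-N}G(L_\phi^N 1_{[b]},x|\lambda)$ and then restricts the sum defining $L_\phi^N 1_{[b]}$ to paths with the chosen prefix, whereas you describe the same restriction as an explicit injection $(n,y)\mapsto(n+k,w\cdot y)$ on index sets; the resulting constants coincide up to the harmless difference between evaluating $\phi_k$ at a fixed point versus taking its infimum over the prefix cylinder.
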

\begin{proof}
Let $N=d_{graph}(b_{},a_{})$, the graph distance from $b_{}$ to $a$, and let $a_1,\dots, a_N$ a path from $a_1=b$ to $a_N=a_{}$. For every $x\in X^{+}$,
$$   G(L_\phi^N1_{[b]},x|\lambda) = \lambda^{N} \sum_{n\geq N}\lambda^{-n}L_\phi^n 1_{[b]}(x) \leq   \lambda^{N}G(1_{[b]},x|\lambda). $$
Therefore,
\begin{align*}
&  G(1_{[b]},x|\lambda)\\
&\geq    \lambda^{-N} G( L_\phi^{^N}1_{[b]},x|\lambda) \\
&= \sum_{n=0}^\infty \sum_{\substack{b_1, \dots,b_{n+N-1}}}\lambda^{-n-N}e^{\phi_{n+N}(bb_1,\dots,b_{n+N-1}x)}\\
&\geq  \sum_{n=0}^\infty \sum_{b_{N+1},\dots,b_{N+n-1}}\lambda^{-n-N}e^{\phi_{n+N}(ba_2,\dots, a_{N-1}ab_{N+1},\dots,b_{N+n-1}x)}\\
&\geq   \lambda^{-N}e^{\phi_N(ba_2,\dots, a_{N-1}ax_a)-\sum_{i=2}^N Var_i(\phi)}\\
&\cdot \sum_{n=0}^\infty \sum_{b_{N+1},\dots,b_{N+n-1}}\lambda^{-n}e^{\phi_{n}(ab_{N+1},\dots,b_{N+n-1}x)}\\
&=\lambda^{-N} e^{\phi_N(ba_2,\dots, a_{N-1}ax_a)-\sum_{i=2}^N Var_i(\phi)}   G(1_{[a]},x|\lambda)
\end{align*}
where all inner sums range over all admissible paths. Then, the inequalities in  Eq. (\ref{eq:green_func_bound}) hold with  $C_{a,b}= \lambda^{N}e^{-\phi_N(ba_2,\dots, a_{N-1}ax_a)+\sum_{i=2}^N Var_i(\phi)}$ and $c_{a,b} = C_{b,a}^{-1}$.
\end{proof}
\begin{lem}
\label{lemma:kernel_uniform_boundness}
Let $(X^{+},T)$ be a transitive locally compact one-sided TMS and let $\phi$ be a $\lambda$-transient potential function with summable variations.  Then, for every $f\in C_c^{+}(X^{+})$, $f\neq 0$, there exist $c_{f }, C_f>0$ s.t.
$$ c_f\leq   K(f,x|\lambda)\leq C_f, \quad \forall x\in X^{+}.$$
\end{lem}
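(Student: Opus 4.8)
The plan is to bound the numerator $G(f,x|\lambda)$ of $K(f,x|\lambda)=G(f,x|\lambda)/G(1_{[o]},x|\lambda)$ from above and below by constant multiples of the denominator $G(1_{[o]},x|\lambda)$, uniformly in $x$. Both estimates rest on Lemma \ref{lemma:kernel_boundness}, together with the facts that $f\mapsto G(f,x|\lambda)$ is linear and monotone (the latter because $L_\phi$ preserves non-negativity). The constants will depend on $f$ but not on $x$, which is exactly the content of the claim.

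For the upper bound I would use local compactness: the single-state cylinders $\{[a]:a\in S\}$ form a cover of $X^{+}$ by compact open sets, so the compact support of $f$ meets only finitely many of them, say $[a_1],\dots,[a_m]$. Hence $f\le \|f\|_\infty\sum_{i=1}^m 1_{[a_i]}$, and by linearity and monotonicity $G(f,x|\lambda)\le \|f\|_\infty\sum_{i=1}^m G(1_{[a_i]},x|\lambda)$. Applying the right-hand inequality of Lemma \ref{lemma:kernel_boundness} (with $b=o$) to each term gives $G(f,x|\lambda)\le \|f\|_\infty\bigl(\sum_{i=1}^m C_{a_i,o}\bigr)G(1_{[o]},x|\lambda)$, i.e. $K(f,x|\lambda)\le C_f$ with $C_f=\|f\|_\infty\sum_{i} C_{a_i,o}$.

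For the lower bound, continuity and $f\neq 0$ let me fix a point $z^\ast$ with $f(z^\ast)>0$ and, shrinking a cylinder neighbourhood, an admissible word $w=(b_0,\dots,b_k)$ and $\delta>0$ with $f\ge \delta 1_{[w]}$, so $G(f,x|\lambda)\ge \delta G(1_{[w]},x|\lambda)$. Here lies the only real obstacle: Lemma \ref{lemma:kernel_boundness} compares \emph{single-state} cylinders, whereas the inclusion $[w]\subseteq[b_0]$ yields only the useless upper bound $G(1_{[w]},x|\lambda)\le G(1_{[b_0]},x|\lambda)$. I would overcome this by re-running the prepending construction of Lemma \ref{lemma:kernel_boundness}: every $n$-step preimage $z$ of $x$ with $z_0=b_k$ extends, by prepending the fixed prefix $b_0\cdots b_{k-1}$, to an $(n{+}k)$-step preimage $y\in[w]$ of $x$ satisfying $T^k y=z$, and this assignment is injective. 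Writing $\phi_{n+k}(y)=\phi_k(y)+\phi_n(z)$ and bounding $\phi_k(y)$ below by $\phi_k$ at a fixed reference point of $[w]$ up to $\sum_{j\ge 2}Var_j(\phi)$ (using summable variations), I obtain $G(1_{[w]},x|\lambda)\ge \gamma\lambda^{-k}G(1_{[b_k]},x|\lambda)$ for a constant $\gamma>0$ independent of $x$. Finally the left-hand inequality of Lemma \ref{lemma:kernel_boundness} gives $G(1_{[b_k]},x|\lambda)\ge c_{b_k,o}G(1_{[o]},x|\lambda)$, whence $K(f,x|\lambda)\ge \delta\gamma\lambda^{-k}c_{b_k,o}=:c_f>0$.

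In summary, the upper bound is routine once local compactness is invoked, while the crux is the lower bound: one cannot appeal to Lemma \ref{lemma:kernel_boundness} directly and must reproduce its prepending and variation estimate to pass from the word cylinder $[w]$, on which $f$ is bounded below, to the single-state cylinder $[b_k]$ covered by the lemma.
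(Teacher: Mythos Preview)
Your proposal is correct and follows essentially the same route as the paper. Both the upper bound (cover $\mathrm{supp}(f)$ by finitely many single-state cylinders and apply Lemma~\ref{lemma:kernel_boundness} termwise) and the lower bound (find a word cylinder $[w]$ on which $f$ is bounded below, then recover a single-state cylinder at the \emph{last} letter of $w$ via the same prepending-plus-variation estimate used in Lemma~\ref{lemma:kernel_boundness}, and finally apply that lemma) match the paper's argument almost verbatim.
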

\begin{proof}
Since $supp(f)$ is compact, there exist $b_1,\dots, b_N\in S$ with $supp(f) \subseteq  \cup_{i=1}^N [b_i]$. Then, by the linearity of the Ruelle operator and by Lemma \ref{lemma:kernel_boundness},
\begin{align*}
  K(f,x|\lambda)=& \frac{  G(f,x|\lambda)}{  G(1_{[o]},x|\lambda)}\\
\leq& \sum_{i=1}^N \frac{\max_{x'\in [b_i]}\{f(x')\}  G(1_{[b_i]},x|\lambda)}{  G(1_{[o]},x|\lambda)}\\
\leq & \sum_{i=1}^N {\max_{x'\in [b_i]}\{f(x')\}}C_{b_i,o}.
\end{align*}
Let $(b_1',\dots, b_{M}')$ an admissible word of length $M$ s.t. $c:=\min_{x'\in [b_1',\dots, b_{M}']} \{f(x')\}>0.$
Then, with $x_{b_M'}\in T[b_M']$,
\begin{align*}   G(f,x| \lambda) \geq& c  G(1_{[b_1',\dots, b_{M}']},x|\lambda)\\
\geq&c \sum_{k=M}^\infty \lambda^{-k}(L_\phi^k 1_{[b_1',\dots, b_{M}']})(x)\\
\geq & ce^{-\phi_M(b_1',\dots, b'_M x_{b'_M})-\sum_{j=2}^\infty Var_j(\phi) \ }\lambda ^{-M} \sum_{k=0}^\infty \lambda^{-k}(L_\phi^k 1_{[ b_{M}']})(x)\\
=&c  e^{-\phi_M(b_1',\dots, b'_M x_{b'_M})-\sum_{j=2}^\infty Var_j(\phi) }\lambda ^{-M}   G(1_{[b_M']},x|\lambda).
\end{align*}
By Lemma \ref{lemma:kernel_boundness}, 
\begin{align*}
  G(1_{[b_M']},x|\lambda)\geq c_{b_{M}' ,o}  G(1_{[o]},x|\lambda) 
\end{align*}
and the lemma follows.
\end{proof}
Let $S^*=\{w_i\}_{i\in \mathbb{N}}$ be an enumeration of all admissible finite words on $S$. We define a new metric $\rho$ on $X^{+}$ by$$ \rho(x,y|\lambda) = \sum_{i=1}^\infty \frac{|  K(1_{[w_i]},x|\lambda)-  K(1_{[w_i]},y|\lambda)|+|1_{[w_i]}(x)-1_{[w_i]}(y)|}{2^i(C_{1_{[w_i]}}+1)}$$ where $C_{1_{[w_i]}}$ is a constant as in Lemma \ref{lemma:kernel_uniform_boundness}. It is easy to verify that $\rho$ is indeed a metric.
\begin{defn}
  The \textit{$\lambda$-Martin compactification} of $X^{+}$ and $\phi$, denoted by $ \widehat{X^+}(\lambda)$, is the completion of $X^{+}$ w.r.t. $\rho$. 
The \textit{$\lambda$-Martin boundary} of $X^{+}$ and $\phi$ is $ {\mathcal{M}}(\lambda):=  \widehat{X^+} (\lambda)\setminus X^{+}$. \end{defn}
We  will often abuse the notations and write $\widehat {X^+}(\lambda) = \widehat {X^+}, {\mathcal{M}}(\lambda)= {\mathcal{M}}$. For  examples of non-trivial Martin boundaries in the probabilistic settings see \cite{sawyer_1997}   and also \cite{pieordeiio_1992} for examples of the dependence (or independence) of the Martin boundary with $\lambda$.
 The following proposition describes  the main  properties of $(\widehat{X^+} (\lambda), \rho)$. The proof is elementary and can be found in the appendix.
\begin{prop}
\label{claim:compactification_prop}
Let $(X^{+},T)$ be a transitive locally compact one-sided TMS and let $\phi$ be a $\lambda$-transient potential function with summable variations.  
\begin{enumerate}[label={(\arabic*)}]
\item
\label{claim:compactification_prop_2}
Let $x_n,x\in X^+$. Then, 
$$x_n \xrightarrow[]{d} x \Longleftrightarrow x_n \xrightarrow[]{\rho} x. $$
\item
\label{claim:compactification_prop_escapes_compact}
If $x_n\xrightarrow[]{\rho}\omega \in \mathcal{  M}(\lambda)$ with $x_n\in X^{+}$, then $x_n\rightarrow \infty$ (escapes every $d$-compact set).

\item 
$\widehat{X^+} (\lambda)$ is compact and $\mathcal{M}(\lambda)$ is closed w.r.t. $\rho$.
 
\item 
Let $A\subseteq X^{+}$. Then, $A$ is $d$-open iff $A$ is $\rho$-open.

\item
For every $f\in C_c(X^{+})$, $  K(f,\cdot|\lambda)\in C(X^+)$ and   can be extended uniquely to a continuous function on $\widehat{X^+} (\lambda)$.
\item
\label{claim:compactification_prop_6}
For every $x \in {\widehat{X^+} (\lambda)},$ $  K(\cdot, x|\lambda)$ viewed as a positive linear functional on $C_c(X^{+})$ defines an $\lambda$-excessive measure on $X^{+}$. If $\omega\in  {\mathcal{M}}(\lambda)$, then $  K(\cdot, \omega|\lambda)$ defines a $\lambda$-conformal measure.
\end{enumerate}
\end{prop}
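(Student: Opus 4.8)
The plan is to view $(X^{+},\rho)$ as an isometric copy of a subset of a compact product space and then read off parts (1)--(6) from this picture, the uniform bounds of Lemmas \ref{lemma:kernel_boundness} and \ref{lemma:kernel_uniform_boundness}, and the continuity statement of Lemma \ref{lemma:G_continuous}. Concretely, the map $\Phi(x)=\big((K(1_{[w_i]},x|\lambda),\,1_{[w_i]}(x))\big)_{i\ge 1}$ sends $X^{+}$ into $\prod_{i}\big([0,C_{1_{[w_i]}}]\times\{0,1\}\big)$, and by construction $\rho$ is precisely the pullback under $\Phi$ of the natural weighted product metric; thus $\Phi$ is an isometric embedding and $\widehat{X^+}(\lambda)$ is isometric to the closure of $\Phi(X^{+})$ in this compact Tychonoff space, which already yields the compactness in part (3). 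For the equivalence of metrics in part (1), the forward implication $x_n\xrightarrow{d}x\Rightarrow x_n\xrightarrow{\rho}x$ follows by dominated convergence in the series defining $\rho$ (each summand is at most $2\cdot 2^{-i}$, and for fixed $i$ it tends to $0$ because $1_{[w_i]}$ and $K(1_{[w_i]},\cdot|\lambda)$ are $d$-continuous), while the converse uses only the indicator terms: $\rho$-convergence forces $1_{[w_i]}(x_n)\to 1_{[w_i]}(x)$ for all $i$, and applying this to the prefix cylinders of $x$ shows that $x_n$ eventually agrees with $x$ on arbitrarily long prefixes.

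Part (4) is then immediate, since two metrics on $X^{+}$ with the same convergent sequences induce the same topology. For part (5) I would separate continuity on $X^{+}$, which is Lemma \ref{lemma:G_continuous} together with part (1), from the continuous extension to $\widehat{X^+}(\lambda)$, which I would obtain by proving that $K(f,\cdot|\lambda)$ is $\rho$-uniformly continuous: each $K(1_{[w_i]},\cdot|\lambda)$ is $\rho$-Lipschitz directly from the definition of $\rho$; a general $f\in C_c(X^{+})$ is approximated uniformly by finite linear combinations $g$ of cylinder indicators on sufficiently deep cylinders; and $|K(f,x|\lambda)-K(g,x|\lambda)|\le \|f-g\|_\infty\sum_j C_{1_{[a_j]}}$ uniformly in $x$ by linearity and Lemma \ref{lemma:kernel_uniform_boundness}, so $K(f,\cdot|\lambda)$ is a uniform limit of uniformly continuous functions and hence extends uniquely to the completion.

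Parts (2) and the closedness in part (3) are where the escape property enters. For part (2), if $x_n\xrightarrow{\rho}\omega\in\mathcal M(\lambda)$ did not escape every compact set, a subsequence would remain in a finite union of (compact) cylinders and so admit a $d$-convergent sub-subsequence to some $x\in X^{+}$; by part (1) that sub-subsequence would $\rho$-converge to $x$ as well, forcing $\omega=x\in X^{+}$, a contradiction. To see that $\mathcal M(\lambda)$ is closed, equivalently that $X^{+}$ is $\rho$-open in $\widehat{X^+}(\lambda)$, I would take $\omega_k\in\mathcal M(\lambda)$ with $\omega_k\xrightarrow{\rho}\omega$ and assume for contradiction $\omega=x\in X^{+}$; choosing approximants $x_{k,n_k}\xrightarrow{\rho}\omega$ that part (2) forces to leave the compact cylinder $[(x)_0]$, part (1) would then give $x_{k,n_k}\xrightarrow{d}x$, contradicting that they avoid the $d$-open set $[(x)_0]$ containing $x$.

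For part (6) the mechanism is the identity $G(L_\phi f,x|\lambda)=\lambda\big(G(f,x|\lambda)-f(x)\big)$, obtained by reindexing $\sum_{n}\lambda^{-n}L_\phi^{n+1}f$; dividing by $G(1_{[o]},x|\lambda)>0$ gives $K(L_\phi f,x|\lambda)=\lambda K(f,x|\lambda)-\lambda f(x)/G(1_{[o]},x|\lambda)$, which for $f\ge 0$ shows that the positive, linear, and (by Lemma \ref{lemma:kernel_uniform_boundness}) finite functional $K(\cdot,x|\lambda)$ is $\lambda$-excessive at every $x\in X^{+}$ and, via Riesz representation, a Radon measure. Passing to $\omega\in\mathcal M(\lambda)$ along $x_n\to\omega$, the defect term vanishes because $x_n\to\infty$ (part (2)) and $f$ has compact support, so $f(x_n)=0$ eventually; since $L_\phi f\in C_c(X^{+})$ by assumption \ref{assumption:locally_compact} and the extension of part (5) is continuous, the identity survives as $K(L_\phi f,\omega|\lambda)=\lambda K(f,\omega|\lambda)$, i.e. $L_\phi^*K(\cdot,\omega|\lambda)=\lambda K(\cdot,\omega|\lambda)$. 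I expect the main obstacle to be the closedness of $\mathcal M(\lambda)$, the one place where completeness of the completion is not enough and one must splice together the escape property and the topological equivalence via a diagonal argument; a close second is the vanishing of the excessive defect on the boundary, which is exactly what upgrades ``excessive'' to ``conformal'' and again rests on escape to infinity.
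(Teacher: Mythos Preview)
Your proposal is correct and follows essentially the same approach as the paper's proof. The one notable cosmetic difference is in part (3): you obtain compactness by embedding $(X^{+},\rho)$ isometrically into the Tychonoff product $\prod_i\big([0,C_{1_{[w_i]}}]\times\{0,1\}\big)$ and taking the closure, whereas the paper runs an explicit diagonalization to extract Cauchy subsequences---but these are two packagings of the same observation that each coordinate of $\Phi$ is bounded. Likewise, your ``uniform limit of $\rho$-Lipschitz functions'' argument for part (5) and the paper's direct verification that $K(f,x_n)$ is Cauchy are the same approximation-by-cylinders idea; and your treatments of parts (1), (2), (4), (6), and the closedness of $\mathcal M(\lambda)$ match the paper's arguments step for step.
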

\begin{defn}
\label{def:omega_measure}
Given $\omega\in \mathcal{M}(\lambda)$, let $\mu_\omega$ denote the measure $$\mu_\omega (f) := K(f,\omega|\lambda) = \lim_{x\xrightarrow[]{\rho}\omega}K(f,x|\lambda). $$
\end{defn}
By Proposition \ref{claim:compactification_prop}, $\mu_\omega$ is a $\lambda$-conformal measure.  
Since $X^{+}$ is not compact but $\widehat {X^+}(\lambda)$ is compact, the boundary  is not empty and  Theorem \ref{thm:conf_existence} follows. In fact, the conformal measure constructed by Cyr in \cite{cyr_2010} to prove Theorem \ref{thm:conf_existence} is of the form $  K(\cdot, \omega|\lambda)$ for some $\omega\in \mathcal{  M}(\lambda)$.  The assumption that $|S|=\infty$ is crucial; otherwise $X^+$ is compact and the boundary is empty. In the next section, we show that all extremal conformal measures correspond to  boundary points.

\section{Integral representation of eigenmeasures}
\label{section:integration}
 In this section we describe all positive $\lambda$-eigenmeasures using an integral formula:
\begin{thm}
\label{thm:boundary_rep_theorem}
Let $(X^{+},T)$ be a transitive locally compact one-sided TMS and let $\phi$ be a $\lambda$-transient potential function with summable variations. Then, for every
$\mu\in \conf(\lambda)$,  there exists a finite measure $\nu$ on $\mathcal{M}(\lambda)$ s.t. for every $f\in C_c(X^{+})$, 
\begin{equation}
\label{eq:boundary_representation}
\mu(f) = \int_{\omega \in\mathcal{M}(\lambda)}K^{}(f,\omega|\lambda)d\nu(\omega).
\end{equation}
\end{thm}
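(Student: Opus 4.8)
The plan is to realize $\mu$ as a weak$^*$ limit of Green potentials whose ``sources'' are pushed out to the Martin boundary, mimicking the classical proof that a harmonic (here: conformal) measure of a transient chain is represented by an integral of Martin kernels over the exit boundary. Using the reduction $L_{\phi-\log\lambda}^*\mu = \mu \Leftrightarrow L_\phi^*\mu = \lambda\mu$ noted after the standing assumptions, I first normalize to $\lambda = 1$ and write $G(f,x) = G(f,x|1)$, $K(f,x) = K(f,x|1)$. The two structural facts I would exploit are: (i) the Green measure based at $x$, namely $G(\cdot,x)$, satisfies $L_\phi^* G(\cdot,x) = G(\cdot,x) - \delta_x$, so it is excessive with a single source at $x$; and (ii) since $\mu$ is conformal its defect $\mu - L_\phi^*\mu$ vanishes, so in the Riesz dichotomy $\mu$ is purely harmonic and carries no interior potential part. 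This is what will force any representing measure to live on the boundary rather than on $X^+$.

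Fix a compact exhaustion $K_1\subseteq K_2\subseteq\cdots$ of $X^+$ by finite unions of (clopen, compact) cylinders with $\bigcup_N K_N = X^+$, and set $U_N = X^+\setminus K_N$. The heart of the argument is a last-exit / balayage decomposition: I would construct, for each $N$, a finite measure $\beta_N$ supported on $U_N$ such that the reconstruction identity
\[
\mu(f) = \int_{U_N} K(f,y)\,d\beta_N(y)
\]
holds for every $f\in C_c(X^+)$ with $\mathrm{supp}(f)\subseteq K_N$ (so that, for a fixed $f$, it holds for all large $N$). Concretely, $\beta_N$ is obtained by sweeping $\mu$ off $K_N$: one decomposes each backward orbit contributing to the Green's function according to its last visit to $K_N$ and collects the resulting exit weights, renormalized by $G(1_{[o]},\cdot)$ so that the Green potential of $\beta_N/G(1_{[o]},\cdot)$ matches $\mu$ on $K_N$. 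The vanishing of the defect of $\mu$ from (ii) is exactly what guarantees that no residual charge is left inside $K_N$, so that $\beta_N$ is genuinely supported on $U_N$. I expect this bookkeeping---verifying the renewal identity for the Ruelle operator and checking that summable variations keep all the exit weights finite---to be the main obstacle; the remainder is soft.

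Two observations then make the limiting argument clean. First, the total masses are controlled for free: since the kernel is normalized at the origin cylinder, $K(1_{[o]},\cdot)\equiv 1$, so taking $f = 1_{[o]}$ (legitimate once $[o]\subseteq K_N$) in the reconstruction identity gives $\beta_N(X^+) = \mu(1_{[o]})$, a finite positive constant by transitivity and the Radon property. Second, $\widehat{X^+}(\lambda)$ is compact by Proposition \ref{claim:compactification_prop}(3), so, viewing each $\beta_N$ as a Radon measure on $\widehat{X^+}(\lambda)$, Banach--Alaoglu yields a subsequence converging weak$^*$ to a finite measure $\nu$ with $\nu(\widehat{X^+}) = \mu(1_{[o]})$. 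Because each cylinder is clopen in $\widehat{X^+}(\lambda)$ (cylinders are $d$-open, hence $\rho$-open by Proposition \ref{claim:compactification_prop}(4), while also being compact and thus closed in the Hausdorff space $\widehat{X^+}(\lambda)$, whose boundary $\mathcal{M}(\lambda)$ is closed), every $K_M$ is clopen; applying the portmanteau inequality for open sets to $K_M$ together with $\beta_N(K_M)=0$ for $N>M$ gives $\nu(K_M)\le\liminf_N\beta_N(K_M)=0$, whence $\nu(X^+)=0$ and $\nu$ is carried by $\mathcal{M}(\lambda)$.

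Finally I would pass to the limit. For a fixed $f\in C_c(X^+)$, Proposition \ref{claim:compactification_prop}(5) extends $K(f,\cdot)$ to a continuous function on the compact space $\widehat{X^+}(\lambda)$, so weak$^*$ convergence gives $\int K(f,y)\,d\beta_N(y)\to\int_{\widehat{X^+}}K(f,\omega)\,d\nu(\omega) = \int_{\mathcal{M}(\lambda)}K(f,\omega)\,d\nu(\omega)$. On the other hand, once $N$ is large enough that $\mathrm{supp}(f)\subseteq K_N$, the left-hand side equals $\mu(f)$ by the reconstruction identity. Hence $\mu(f) = \int_{\mathcal{M}(\lambda)}K(f,\omega)\,d\nu(\omega)$ for all $f\in C_c(X^+)$, which is the claimed representation; undoing the normalization $\lambda=1$ recovers the general statement.
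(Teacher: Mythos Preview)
Your overall strategy---approximate $\mu$ by Martin integrals against sources escaping to infinity, extract a weak$^*$ limit on the compactification, and show the limit lives on $\mathcal{M}(\lambda)$---is sound, and the soft parts (mass control via $K(1_{[o]},\cdot)\equiv 1$, portmanteau, passage to the limit using Proposition~\ref{claim:compactification_prop}) are correct. But the hard step you flag is a genuine gap, not just bookkeeping. Your proposed construction of $\beta_N$ rests on a last-exit/renewal decomposition of $G(f,y)$: split each contributing path at its last visit to $K_N$ and factor the weight $e^{\phi_m}$. For a Markovian potential this factorization is exact and yields the classical balayage identity. For a potential with merely summable variations, however, Birkhoff sums do \emph{not} split exactly: $\phi_m(w_0)$ differs from $\phi_\sigma(w_0)+\phi_{m-\sigma}(w_\sigma)$ by an error of order $\sum_{k\ge 2}\mathrm{Var}_k(\phi)$, bounded but not tending to zero with $N$. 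Consequently the ``exit weights'' you collect do not reconstruct $\mu(f)$ exactly as $\int_{U_N}K(f,y)\,d\beta_N(y)$; at best you get the identity up to a fixed multiplicative constant, which is not enough to conclude $\mu(f)=\int K(f,\omega)\,d\nu(\omega)$ in the limit. One can recover an \emph{exact} identity $\mu(f)=\int_{U_N}H_N f\,d\mu$ via a first-exit telescoping with the killed operator $L_N=1_{K_N}L_\phi$, but the resulting kernel $H_N f$ is not $G(f,\cdot)$, and the discrepancy depends on $f$, so it cannot be absorbed into a measure $\beta_N$.

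The paper circumvents this entirely with a different idea. Rather than sweeping $\mu$ outward, it approximates $\mu$ from \emph{below} by $\mu_n=\mu\wedge\eta_n$, where $\eta_n(f)=\int_{W_n}G(f,x)\,d\mu(x)$ is an explicit Green potential. The lattice minimum $\wedge$ (Lemma~\ref{lemma:min_measures}, Corollary~\ref{cor:min_measures}) guarantees that $\mu_n$ is excessive; the bound $\mu_n\le\eta_n$ forces $(L_\phi^*)^m\mu_n\to 0$, so $\mu_n$ is a \emph{pure potential} and Riesz (Lemma~\ref{lemma:riesz}) represents it exactly as $\mu_n(f)=\int K(f,x)\,d\nu_n'(x)$ with no renewal identity needed. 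No control on $\mathrm{supp}(\nu_n')$ is claimed or required at finite $n$: instead, after passing to the weak$^*$ limit $\nu$, the inclusion $\mathrm{supp}(\nu)\subseteq\mathcal{M}$ is deduced directly from conformality of $\mu$ and the elementary identity $K(1_{[a]},x)-K(L_\phi 1_{[a]},x)=1_{[a]}(x)/G(1_{[o]},x)$. The lattice trick is the missing ingredient in your proposal: it manufactures exact potential approximants without any path-decomposition, and thus works uniformly for all potentials with summable variations.
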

Later on in this section, we will introduce the \textit{minimal Martin boundary} which yields a unique representation.  

To prove Theorem \ref{thm:boundary_rep_theorem} we need the following lemmas.
\begin{lem}
\label{lemma:riesz}
Let $(X^{+},T)$ be a transitive locally compact one-sided TMS and let $\phi$ be a $\lambda$-transient potential function with summable variations.  Let $\mu$ be a $\lambda$-excessive measure. 
Then the limit $\mu^* = \lim_{n\rightarrow\infty} \lambda^{-n}(L_\phi^*)^n \mu$ exists, $L_\phi^* \mu^* = \lambda\mu^*$ and if $\nu = \mu - \lambda^{-1}L_\phi^* \mu$, then 
$$\mu(f) = \int G(f,x|\lambda)d\nu(x) + \mu^*(f), \quad \forall f\in C_c(X). $$
\end{lem}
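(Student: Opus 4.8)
The plan is to prove this as the measure-theoretic Riesz decomposition, reducing to $\lambda=1$ and then exploiting the monotonicity built into excessiveness. Replacing $\phi$ by $\phi-\log\lambda$ turns every $\lambda$-excessive (resp. $\lambda$-conformal) measure into a $1$-excessive (resp. $1$-conformal) measure and replaces $G(\cdot,\cdot|\lambda)$ by $G(\cdot,\cdot|1)=\sum_{k\ge 0}L_\phi^k(\cdot)$, so I would assume $\lambda=1$ throughout and recover the general case by unwinding this substitution, as in the reduction already recorded in Section \ref{section:preliminaries}. Set $\mu_n:=(L_\phi^*)^n\mu$ and $\nu:=\mu-L_\phi^*\mu$. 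The first routine observation I would record is that $L_\phi$ maps $C_c^+(X^+)$ into itself: if $f$ is supported on $\bigcup_{i\le N}[b_i]$, then $L_\phi f$ is supported on the compact set $T(\mathrm{supp}(f))$ and is locally a finite sum of continuous functions, hence lies in $C_c^+(X^+)$. Consequently $L_\phi^*$ is a well-defined positive operator on Radon measures, and excessiveness $L_\phi^*\mu\le\mu$ says precisely that $\nu(f)=\mu(f)-\mu(L_\phi f)\ge 0$ for all $f\in C_c^+(X^+)$; thus $\nu$ is a positive Radon measure with $\nu\le\mu$.

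Next I would establish existence of $\mu^*$. Since $f\ge 0\Rightarrow L_\phi f\ge 0$, the operator $L_\phi^*$ is order-preserving on positive measures, so applying it repeatedly to $L_\phi^*\mu\le\mu$ yields the decreasing chain $\mu_{n+1}\le\mu_n\le\cdots\le\mu_0=\mu$. For each fixed $f\in C_c^+(X^+)$ the numbers $\mu_n(f)$ are nonincreasing and nonnegative, hence converge; the resulting positive linear functional $f\mapsto\lim_n\mu_n(f)$ defines, via Riesz representation, a Radon measure $\mu^*$ with $\mu^*\le\mu$ (in particular finite on compacts). This is the asserted weak$^*$ limit $\mu^*=\lim_n(L_\phi^*)^n\mu$. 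To see that $\mu^*$ is conformal, I fix $f\in C_c^+(X^+)$ and use that $L_\phi f$ is itself a legitimate test function in $C_c^+(X^+)$: then $(L_\phi^*\mu^*)(f)=\mu^*(L_\phi f)=\lim_n\mu_n(L_\phi f)=\lim_n\mu_{n+1}(f)=\mu^*(f)$, so $L_\phi^*\mu^*=\mu^*$.

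Finally I would derive the representation by telescoping. Writing $\mu_k-\mu_{k+1}=(L_\phi^*)^k\nu$ and summing gives $\mu-\mu_{n+1}=\sum_{k=0}^n(L_\phi^*)^k\nu$, so for $f\in C_c^+(X^+)$ one has $\mu(f)-\mu_{n+1}(f)=\sum_{k=0}^n\nu(L_\phi^k f)$. Letting $n\to\infty$, the left side tends to $\mu(f)-\mu^*(f)$, while on the right the summands $\nu(L_\phi^k f)=\int L_\phi^k f\,d\nu$ are nonnegative, so Tonelli gives $\sum_{k\ge 0}\int L_\phi^k f\,d\nu=\int\sum_{k\ge 0}L_\phi^k f\,d\nu=\int G(f,x|1)\,d\nu(x)$. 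This yields the claimed formula on $C_c^+(X^+)$, and the general $f\in C_c(X^+)$ follows by splitting $f=f^+-f^-$, finiteness being guaranteed by $\int G(|f|,x|1)\,d\nu(x)=\mu(|f|)-\mu^*(|f|)\le\mu(|f|)<\infty$.

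The argument is essentially the classical Riesz decomposition transported to the Ruelle setting, so the only genuine obstacles are bookkeeping: verifying that $L_\phi$ preserves $C_c^+(X^+)$, which is what makes $L_\phi^*$, the order relation, and all iterates simultaneously well-defined on Radon measures, and justifying the two limit interchanges — passing the monotone limit through the fixed test function $L_\phi f$ to obtain conformality, and the Tonelli step that reassembles $\sum_k L_\phi^k f$ into the Green's function. The $\lambda$-transience hypothesis enters only to guarantee that $G(f,\cdot|\lambda)$ is pointwise finite, so that the potential term is a genuine integral rather than identically infinite.
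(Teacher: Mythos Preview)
Your proof is correct and follows essentially the same approach as the paper: both arguments telescope $\mu=\sum_{k=0}^{n-1}\lambda^{-k}(L_\phi^*)^k\nu+\lambda^{-n}(L_\phi^*)^n\mu$, use excessiveness to get a monotone decreasing sequence $\lambda^{-n}(L_\phi^*)^n\mu(f)$ whose limit defines the conformal $\mu^*$, and pass to the limit to assemble the Green's function term. You supply more of the bookkeeping (the reduction to $\lambda=1$, the verification that $L_\phi$ preserves $C_c^+(X^+)$, the explicit Tonelli and $f=f^+-f^-$ steps), but the argument is the same.
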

\begin{proof}
We write
$$ \mu = \sum_{k=0}^{n-1}\lambda^{-k}(L_\phi^*)^k\left( \mu - \lambda^{-1}L_\phi^*\mu  \right) +\lambda^{-n} (L_\phi^* )^n \mu.$$
Since $\mu$ is $\lambda$-excessive, the sequence  $\lambda^{-n}((L_\phi^* )^n \mu)(f)$ is non-increasing for every $f\in C_c^+(X^+)$ and the limit $\lim_{n\rightarrow\infty}\lambda^{-n}(L_\phi^* )^n \mu$ is a well-defined positive linear functional on $C_c(X^+)$.  As such, it defines a non-negative Radon measure on $X^+$ which is obviously conformal. In particular, the infinite sum $$\sum_{k=0}^{\infty}\lambda^{-k}(L_\phi^*)^k\left( \mu - \lambda^{-1}L_\phi^*\mu  \right)(f)$$ converges for all $f\in C_c(X^+)$ and the lemma follows. 
\end{proof}

\begin{defn}
 The minimum $\mu_1\wedge \mu_2$  of two measures $\mu_1, \mu_2$ is defined by $$\frac{d(\mu_1\wedge \mu_2)}{d(\mu_1+ \mu_2)} = \min\left\{ \frac{d\mu_{1}}{d(\mu_1+ \mu_2)},\frac{d\mu_{2}}{d(\mu_1+ \mu_2)} \right\} \quad (\mu_1+\mu_2 )-a.e.$$
\end{defn}
\begin{lem}
\label{lemma:min_measures}
Let $(X^{+},T)$ be a  locally compact one-sided TMS.  Then, for every two positive Radon measure $\mu_1,\mu_2$ on $X^+$,
$$(\mu_1\wedge \mu_2)(f) \leq \min\{\mu_1(f), \mu_2(f)\}, \quad \forall f\in C_c(X^+).$$ 
If $\mu'$ is a positive Radon measure such that $\mu'(f)\leq \min\{\mu_1(f), \mu_2(f)\}$ for every $f\in C_c^+(X^+)$, then $\mu' \leq \mu_1\wedge \mu_2$.
\end{lem}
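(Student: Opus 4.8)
The plan is to reduce everything to the Radon--Nikodym theorem relative to the reference measure $\nu := \mu_1 + \mu_2$. Since $X^+ = \bigcup_{a\in S}[a]$ is a countable union of compact cylinders, it is $\sigma$-compact (and second countable), so every Borel measure finite on compacts is $\sigma$-finite and regular; in particular $\nu$ is $\sigma$-finite and $\mu_1,\mu_2\ll\nu$. Writing $g_i := d\mu_i/d\nu \in L^1_{loc}(\nu)$ for $i=1,2$, the definition of the minimum gives $d(\mu_1\wedge\mu_2)/d\nu = \min\{g_1,g_2\} =: g$. I would note at the outset that the inequalities are to be read for $f\in C_c^+(X^+)$, since for a sign-changing $f$ the first inequality reverses on the negative part.

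For the first inequality the argument is immediate: for $f\in C_c^+(X^+)$, since $g\le g_i$ pointwise $\nu$-a.e.\ and $f\ge 0$,
$$(\mu_1\wedge\mu_2)(f) = \int f\,g\,d\nu \le \int f\,g_i\,d\nu = \mu_i(f), \qquad i=1,2,$$
and taking the minimum over $i$ yields $(\mu_1\wedge\mu_2)(f)\le\min\{\mu_1(f),\mu_2(f)\}$.

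The substance is in the second statement. Given $\mu'$ with $\mu'(f)\le\mu_i(f)$ for all $f\in C_c^+(X^+)$ and both $i$, I would first upgrade this functional inequality to an inequality of measures $\mu'\le\mu_i$ on all Borel sets. For compact $K$ and $\varepsilon>0$, outer regularity of $\mu_i$ provides an open $U\supseteq K$ with $\mu_i(U)<\mu_i(K)+\varepsilon$, and Urysohn's lemma (using local compactness, assumption (A2)) provides $f\in C_c^+(X^+)$ with $1_K\le f\le 1_U$; then $\mu'(K)\le\mu'(f)\le\mu_i(f)\le\mu_i(U)<\mu_i(K)+\varepsilon$, so $\mu'(K)\le\mu_i(K)$, and inner regularity of $\mu'$ extends this to all Borel sets. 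In particular $\mu'\ll\nu$, so $g':=d\mu'/d\nu\in L^1_{loc}(\nu)$ is defined and $\int_B g'\,d\nu = \mu'(B)\le\mu_i(B)=\int_B g_i\,d\nu$ for every Borel $B$ contained in a compact set, where all integrals are finite. Comparing on $B=\{g'>g_i\}\cap C$ for compact $C$ forces $\nu(B)=0$, i.e.\ $g'\le g_i$ $\nu$-a.e., hence $g'\le\min\{g_1,g_2\}=g$ $\nu$-a.e. Integrating over any Borel set gives $\mu'(B)\le(\mu_1\wedge\mu_2)(B)$, and in particular $\mu'(K)\le(\mu_1\wedge\mu_2)(K)$ for every compact $K$, which is precisely $\mu'\le\mu_1\wedge\mu_2$.

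The only genuinely non-algebraic steps are the two appeals to measure-theoretic regularity: the Urysohn approximation converting the test-function inequality into an inequality of measures, and the $\sigma$-finiteness of $\nu$ that makes the Radon--Nikodym densities available despite $\mu_1,\mu_2,\mu'$ being possibly infinite. I expect the former --- passing from $\mu'(f)\le\mu_i(f)$ on $C_c^+$ to $\mu'(K)\le\mu_i(K)$ on compacts --- to be the main point requiring care, since it is exactly where local compactness of $X^+$ is used; the remainder is a routine comparison of densities.
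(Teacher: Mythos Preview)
Your proof is correct and shares the same basic setup as the paper's---both work with the densities $g_i = d\mu_i/d\nu$ relative to $\nu=\mu_1+\mu_2$, and the first inequality is handled identically. Your observation that the first claim should really be stated for $f\in C_c^+(X^+)$ is also on point.

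The second part is where the two arguments diverge slightly. You pass through Urysohn and regularity to convert the test-function inequality into $\mu'\le\mu_i$ on Borel sets, then deduce $\mu'\ll\nu$ and compare densities pointwise to get $g'\le\min\{g_1,g_2\}$. The paper instead extends the inequality $\mu'(f)\le\int f\,g_i\,d\nu$ from $C_c^+$ to all nonnegative measurable $f$ in one line, and then applies it to the two pieces of the decomposition
\[
f = f\cdot 1_{\{g_1\le g_2\}} + f\cdot 1_{\{g_1>g_2\}},
\]
pairing each piece with the index $i$ on which $g_i$ realizes the minimum; summing gives $\mu'(f)\le\int f\min\{g_1,g_2\}\,d\nu$ directly. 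This trick sidesteps the need to prove $\mu'\ll\nu$ or to introduce a density $g'$ for $\mu'$ at all. Your route is more explicit about where local compactness and regularity enter (which you correctly flag as the only non-formal step), while the paper's decomposition is a bit slicker once the extension to measurable $f$ is granted---though that extension, of course, hides exactly the Urysohn/regularity step you spell out.
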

\begin{proof}
Let $\mu = \mu_1 \wedge \mu_2$. For every $f\in C_c^{+}(X^{+})$ and for $i=1,2$
\begin{align*}
\mu(f) =&\int f(x)  \min\left\{ \frac{d\mu_{1}}{d(\mu_1+ \mu_2)},\frac{d\mu_{2}}{d(\mu_1+ \mu_2)} \right \}(x)d(\mu_1+\mu_2)(x)\\
\leq& \int f(x)  \frac{d\mu_i}{d\mu_1+d\mu_2}(x) d(\mu_1+\mu_2)(x)=\mu_i(f)
\end{align*}
and  $\mu\leq \mu_i$. 

Let $\mu'$ be a positive Radon measure with $\mu' (f) \leq \min_{i}\{\mu_i(f)\}$. Then, 
\begin{equation}
\label{eq:min_measures1}\mu'(f) \leq \int f(x) d\mu_i(x) =  \int f(x) \frac{d\mu_i}{d(\mu_1 + \mu_2)}(x) d(\mu_1 + \mu_2)(x). 
\end{equation}
Inequality (\ref{eq:min_measures1}) extends to all non-negative measurable functions. By decomposing 
\begin{equation}
\label{eq:min_f_decomp}
f=1_{\left[  \min\left\{ \frac{d\mu_{1}}{d(\mu_1+ \mu_2)},\frac{d\mu_{2}}{d(\mu_1+ \mu_2)} \right \}=\frac{d\mu_{1}}{d(\mu_1+ \mu_2)}\right]}f+1_{\left[  \min\left\{ \frac{d\mu_{1}}{d(\mu_1+ \mu_2)},\frac{d\mu_{2}}{d(\mu_1+ \mu_2)} \right \}<\frac{d\mu_{1}}{d(\mu_1+ \mu_2)}\right]}f
\end{equation}
we obtain
that
$$ \mu'(f) \leq  \int f(x) \min\left\{ \frac{d\mu_{1}}{d(\mu_1+ \mu_2)},\frac{d\mu_{2}}{d(\mu_1+ \mu_2)} \right \} (x)d(\mu_1 + \mu_2)(x)=\mu(f).$$
\end{proof}
\begin{cor}
\label{cor:min_measures}
Let $(X^{+},T)$ be a  locally compact one-sided TMS and let $\phi$ be a potential function. If $\mu_1$ and $\mu_2$ are $\lambda$-excessive, then $\mu_1\wedge\mu_2$ is $\lambda$-excessive.
\end{cor}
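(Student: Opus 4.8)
The plan is to set $\mu:=\mu_1\wedge\mu_2$ and verify directly that $L_\phi^*\mu\le\lambda\mu$ by testing against functions $f\in C_c^+(X^+)$. Recall that for a positive Radon measure $\nu$ the dual operator acts by $(L_\phi^*\nu)(f)=\nu(L_\phi f)$, and that $\nu$ is $\lambda$-excessive precisely when $\nu(L_\phi f)\le\lambda\,\nu(f)$ for every $f\in C_c^+(X^+)$ (for positive Radon measures, comparison on compact sets is equivalent to comparison against nonnegative compactly supported test functions, by inner regularity). The first observation I would record is that $L_\phi$ maps $C_c^+(X^+)$ into itself: if $f\in C_c^+(X^+)$ then $L_\phi f$ is continuous (by local compactness together with summable variations) and its support is contained in $T(supp(f))$, which is compact; hence $L_\phi f\in C_c^+(X^+)$. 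This is exactly what makes the minimum lemma applicable to $L_\phi f$.

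Since $\mu\le\mu_1$ and $\mu_1$ is Radon, $\mu$ is itself a positive Radon measure, so $\lambda^{-1}L_\phi^*\mu$ is a well-defined positive Radon measure. By the second half of Lemma \ref{lemma:min_measures}, to conclude $\lambda^{-1}L_\phi^*\mu\le\mu_1\wedge\mu_2$ it suffices to show
\[
\lambda^{-1}(L_\phi^*\mu)(f)\ \le\ \min\{\mu_1(f),\mu_2(f)\}\qquad\text{for all } f\in C_c^+(X^+).
\]
This is the core computation, which I would carry out as the following chain, using in turn duality, the first half of Lemma \ref{lemma:min_measures} applied to $L_\phi f\in C_c^+(X^+)$, duality again, and finally the $\lambda$-excessiveness of $\mu_1$ and $\mu_2$:
\begin{align*}
(L_\phi^*\mu)(f)=\mu(L_\phi f)=(\mu_1\wedge\mu_2)(L_\phi f)
&\le \min\{\mu_1(L_\phi f),\mu_2(L_\phi f)\}\\
&=\min\{(L_\phi^*\mu_1)(f),(L_\phi^*\mu_2)(f)\}\\
&\le \min\{\lambda\mu_1(f),\lambda\mu_2(f)\}=\lambda\min\{\mu_1(f),\mu_2(f)\}.
\end{align*}
Dividing by $\lambda$ gives exactly the displayed bound.

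Having established this inequality, the second half of Lemma \ref{lemma:min_measures} yields $\lambda^{-1}L_\phi^*\mu\le\mu_1\wedge\mu_2=\mu$, i.e. $L_\phi^*\mu\le\lambda\mu$, which is the definition of $\mu$ being $\lambda$-excessive. I do not expect a serious obstacle: the only point needing care is orchestrating the two halves of Lemma \ref{lemma:min_measures} together with the stability of $C_c^+(X^+)$ under $L_\phi$. The first half supplies the bound $(\mu_1\wedge\mu_2)(g)\le\min\{\mu_1(g),\mu_2(g)\}$ used to control $\mu$ at $g=L_\phi f$, while the second half supplies the reverse-direction characterization needed to upgrade a test-function bound into the measure inequality $\le$ on compacts; and the $C_c^+$-invariance of $L_\phi$ is precisely the hypothesis under which both halves apply to $L_\phi f$.
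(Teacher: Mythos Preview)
Your proof is correct and follows essentially the same route as the paper: apply the first half of Lemma~\ref{lemma:min_measures} to $L_\phi f$ to bound $(\mu_1\wedge\mu_2)(L_\phi f)$ by $\min\{\mu_1(L_\phi f),\mu_2(L_\phi f)\}$, use $\lambda$-excessiveness of each $\mu_i$, and then invoke the second half of the lemma to pass back to $\lambda(\mu_1\wedge\mu_2)(f)$. You are in fact more explicit than the paper about why $L_\phi f\in C_c^+(X^+)$ and about the role of the second half of the lemma, which is helpful.
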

\begin{proof}
By Lemma \ref{lemma:min_measures}, for every $f\in C_c^+(X)$,  
$$(\mu_1 \wedge \mu_2) (L_\phi f) \leq \min\{\mu_1(L_\phi f), \mu_2(L_\phi f)\}\leq \lambda \min\{\mu_1(f), \mu_2(f)\} $$
and  $(\mu_1 \wedge \mu_2) (L_\phi f)  \leq \lambda(\mu_1 \wedge \mu_2)(f)$.
\end{proof}
\begin{proof}[Proof of Theorem \ref{thm:boundary_rep_theorem}]
Assume w.l.o.g. that $\lambda=1$. Let $W_{n}$ be a sequence of compact sets increasing to $X^{+}$ and let 
$$ \eta_n (f) := \int_{W_n} G(f,x)d\mu(x).$$
Clearly $\eta_n$ are positive Radon measures. Consider $\mu_n = \mu \wedge \eta_n$.  Let $f\in C_c^{+}(X)$ and let $N$ be an integer large enough  so that $supp(f) \subseteq W_n$ for all $n\geq N$. Let $\mu' = \mu_{|supp(f)}$, the restriction of $\mu$ to $supp(f)$. Clearly $\mu'\leq \mu$ and $\mu'(f)=\mu(f)$. Moveover, for every $g\in C_c^+(X^{+})$ and $n\geq N$, 
\begin{align*} \mu'(g)=&\mu(g1_{supp(f)}) \\
\leq & \int_{W_n}g(x) d\mu(x)\\
\leq&   \int_{{W_{n}}}\sum_{k=0}^\infty(L_\phi^k g)(x) d\mu(x)=\eta_n(g). \end{align*}
Thus, by Lemma \ref{lemma:min_measures}, $\mu'\leq \mu_n$ and $\mu(f)= \mu'(f) \leq \mu_n(f)$ for all $n$ large enough. Since $\mu_n \leq \mu$ for all $n$, we have that $\lim_{n\rightarrow\infty}\mu_n(f)=\mu(f)$ for all $f\in C_c(X^+)$. 

 Observe that $\forall f\in C_c^+(X^+)$,
$$\ (L_\phi^* \eta_n) (f)  = \int_{W_n} \sum_{k\geq 1}(L_\phi^k f)(x)d\mu(x) \leq \int _{W_n}G(f,x)d\mu(x) = \eta_n(f),$$
and thus  $\eta_n$ is excessive. By Corollary \ref{cor:min_measures}, $\mu_n$ is excessive as well.  By the bounded convergence theorem, for every fixed $n$ and every $f\in C_c^+(X^+)$, $$((L_\phi^*)^m\mu_n)(f)\leq((L_\phi^*)^m\eta_n )(f)= \int_{W_n} \sum_{k\geq m}(L_\phi^kf)(x)d\mu(x)\xrightarrow[m\rightarrow\infty]{}0 .  $$  Then, by Lemma \ref{lemma:riesz}  there exists a measure $\nu_n$ on $\widehat{X^+}$ s.t. for every  $f\in C_c(X^{+})$, 
$$\mu_n(f) = \int G(f,x)d\nu_n(x) = \int K(f,x) G(1_{[o]},x)d\nu_n(x). $$
Let $d\nu_n'(x) = G(1_{[o]},x)d\nu_n(x)$. Since $\widehat{X^+}$ is compact and $\nu'_n(1) = \mu_n(1_{[o]})\leq \mu(1_{[o]})$ for all $n$, there exists a weak$^*$-converging subsequence $\nu'_{n_k}$ to a measure $\nu$.
Since $K(f,x)$ is $\rho$-continuous for every $f\in C_c(X^{+})$,
$$ \mu(f) = \int K(f,x)d\nu(x).$$ 

Next, we show that that $supp(\nu) \subseteq \mathcal{M}$. According to Proposition \ref{claim:compactification_prop}, for every $x\in \widehat {X^+}$,  
$$K^{ }(L_\phi f,x) \leq  K^{ }(f,x).$$
Moreover, for every $x\in X^{+}$ and $a\in S$,
$$K^{ }(1_{[a]},x) - K^{ }(L_\phi 1_{[a]},x) =\frac{1_{[a]}(x)}{G(1_{[o]},x)} .$$
Since $G(1_{[a]},x)$ is positive and continuous, it is bounded away from zero on any compact set $[a]$. Hence, 
\begin{align*}
0= \mu(1_{[a]}) - L_\phi^*\mu(1_{[a]})=& \int _{\widehat{ X^+}}\left(K^{ }(1_{[a]},x) - K^{ }(L_\phi 1_{[a]},x)_{ } \right) d\nu(x)\\
\geq & \int _{[a]}\left(K^{ }(1_{[a]},x) - K^{ }(L_\phi 1_{[a]},x)_{ } \right) d\nu(x)\\
= & \int_{[a]}G(1_{[o]},x)^{-1}d\nu(x)\\
\geq & \min_{x\in [a]}\{G(1_{[o]},x)^{-1}\}\nu([a])
\end{align*}
 whence $\nu([a])=0$, which implies that $\nu(X)=\sum_{a\in S}\nu([a])=0$.
\end{proof}
\begin{remk}
If we assume that $\mu$ is only $\lambda$-excessive, then we obtain similar results except that $\nu$ may charge $X^+$. 
\end{remk}

Next, we study the extremal points of the cone $\conf(\lambda)$.

\begin{defn}A measure  $\mu\in \conf(\lambda) $ is called \textit{extremal} if for every $\mu_1,\mu_2 \in \conf(\lambda)$ with $\mu = \mu_1+\mu_2$ we have that $\mu_i \propto\ \mu$. \end{defn}
Recall the definition of the measure $\mu_\omega$; $\mu_\omega (f)= K(f,\omega|\lambda)
$, for $\omega\in \mathcal{M}(\lambda)$ and $f\in C_c(X^+)$. Observe that if $\omega_1,\omega_2\in \mathcal{M(\lambda)}$ with $K(1_{[w]},\omega_1)= K(1_{[w]},\omega_2)$ for all $w\in S^*$ then $\rho(\omega_1,\omega_2)=0$, whence $\omega_1=\omega_2$.  Therefore, for all $\omega_1\neq \omega_2$, $\mu_{\omega_1}\neq \mu_{\omega_2}$ and vice versa. Moreover,  since  $\mu_{\omega_{i}}([o])=1$, $\mu_{\omega_1}$ and $\mu_{\omega_2}$ are not proportional.
\begin{lem}
\label{lemma:extremal_points}
Let $(X^{+},T)$ be a transitive locally compact one-sided TMS and let $\phi$ be a $\lambda$-transient potential function with summable variations.  Let $\mu\in \conf(\lambda)$ and assume that $\mu$ is extremal. If $\mu = \int_{\mathcal{M}}\mu_\omega d\nu(\omega)$ where $\nu $ is a positive and finite Borel measure on $\mathcal{M}$,  then there exists $\omega\in \mathcal{M}(\lambda)$ s.t. $\nu \propto \delta_\omega$.
\end{lem}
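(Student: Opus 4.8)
The plan is to run the standard Choquet-type argument adapted to this Martin setting: extremality of $\mu$ should force the representing measure $\nu$ to be concentrated on a single boundary point, and the mechanism is that every Borel ``slice'' of $\nu$ produces a conformal sub-measure of $\mu$, which extremality pins down to a scalar multiple of $\mu$.

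First I would record two facts about the family $\{\mu_\omega\}_{\omega\in\mathcal{M}(\lambda)}$: each $\mu_\omega$ lies in $\conf(\lambda)$ (Proposition \ref{claim:compactification_prop}, part \ref{claim:compactification_prop_6}) and is normalized by $\mu_\omega(1_{[o]})=K(1_{[o]},\omega|\lambda)=1$, since $K(1_{[o]},\cdot|\lambda)\equiv 1$ on $X^+$ and extends continuously. We may assume $\mu\neq 0$, so $\nu(\mathcal{M}(\lambda))=\mu(1_{[o]})>0$. For any Borel set $B\subseteq \mathcal{M}(\lambda)$ I define the sliced functional $\mu_B(f):=\int_B K(f,\omega|\lambda)\,d\nu(\omega)$ for $f\in C_c(X^+)$; the integrand is continuous and bounded in $\omega$ (Lemma \ref{lemma:kernel_uniform_boundness} and Proposition \ref{claim:compactification_prop}) and $\mu_B\leq\mu$, so $\mu_B$ is a positive Radon measure. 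Since each $\mu_\omega$ is conformal, linearity gives $\mu_B(L_\phi f)=\int_B \lambda K(f,\omega|\lambda)\,d\nu(\omega)=\lambda\mu_B(f)$, so $\mu_B\in\conf(\lambda)$, and $\mu=\mu_B+\mu_{\mathcal{M}(\lambda)\setminus B}$ is a decomposition inside $\conf(\lambda)$.

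Next, extremality of $\mu$ yields $\mu_B\propto\mu$, say $\mu_B=c_B\mu$ with $c_B\geq 0$. Evaluating at $1_{[o]}$ and using $\mu_\omega(1_{[o]})=1$ pins the constant: $c_B=\nu(B)/\nu(\mathcal{M}(\lambda))$. Thus for every $f\in C_c(X^+)$ and every Borel $B$,
$$\int_B K(f,\omega|\lambda)\,d\nu(\omega)=\frac{\nu(B)}{\nu(\mathcal{M}(\lambda))}\int_{\mathcal{M}(\lambda)}K(f,\omega'|\lambda)\,d\nu(\omega'),$$
which forces $\omega\mapsto K(f,\omega|\lambda)$ to be $\nu$-a.e. equal to its $\nu$-average. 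I then apply this to the countable family $f=1_{[w_i]}$, $w_i\in S^*$ (each $1_{[w_i]}\in C_c(X^+)$ because cylinders are compact and open), obtaining, after taking the union of the associated null sets, a single $\nu$-null set $N$ such that for all $\omega\notin N$ the values $K(1_{[w_i]},\omega|\lambda)=c_i$ are independent of $\omega$, for every $i$.

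Finally, the conclusion follows from the way $\rho$ was built: as noted just before the statement, two boundary points sharing all the values $K(1_{[w]},\cdot|\lambda)$ satisfy $\rho(\cdot,\cdot)=0$ and hence coincide. Therefore any two points of $\mathcal{M}(\lambda)\setminus N$ are equal, so $\mathcal{M}(\lambda)\setminus N$ is a single point $\omega_0$ (nonempty since $\nu(\mathcal{M}(\lambda)\setminus N)=\nu(\mathcal{M}(\lambda))>0$), giving $\nu=\nu(\mathcal{M}(\lambda))\,\delta_{\omega_0}\propto\delta_{\omega_0}$. The step needing the most care is the passage from the pointwise ``$\nu$-a.e.\ constant'' statements, one per test function each with its own null set, to a single null set and then to genuine equality of boundary points; this is exactly where restricting to the countable separating family $\{1_{[w_i]}\}$ and invoking the definition of $\rho$ does the work. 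One should also verify the routine measure-theoretic points --- that $\mu_B$ is genuinely Radon and conformal and that $c_B$ is correctly identified --- but these are straightforward given the normalization $\mu_\omega(1_{[o]})=1$.
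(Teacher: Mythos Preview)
Your proof is correct and follows essentially the same slicing-plus-extremality strategy as the paper (which in turn follows Woess, Theorem~24.8): split $\nu$ along a Borel set $B$, observe that each piece integrates to a conformal measure, and use extremality to force $\mu_B\propto\mu$ with constant $\nu(B)/\nu(\mathcal{M})$. The only difference is the endgame: the paper passes through the $0$--$1$ dichotomy $\nu(B)\in\{0,\nu(\mathcal{M})\}$ to conclude $\nu$ is a Dirac mass, while you instead deduce that each $\omega\mapsto K(1_{[w_i]},\omega|\lambda)$ is $\nu$-a.e.\ constant and then invoke the separating property of the cylinder family and the definition of $\rho$ to pin down the unique support point --- both finishes are standard and your version is arguably more self-contained.
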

\begin{proof}
Assume w.l.o.g. that $\mu([o])=1$ and that $\lambda=1$. We follow the proof of Theorem 24.8 in \cite{woess_2000}.
  Let $\nu$ be a positive finite measure on $\mathcal{M}$ s.t.$$ \mu(\cdot) = \int K^{}(\cdot, \omega) d\nu(\omega).$$
We show that $\nu \propto\ \delta_\omega$ for some $\omega\in \mathcal{M}$. Then, since $\mu([a]) = K([a], \omega)=1$, $\nu = \delta_\omega$.
Assume that there exists a Borel set $B \subseteq \mathcal{M}$ s.t. $0<\nu(B) <\nu(\mathcal{M})$.  We define
 
$$ \mu_1  = \frac{1}{\nu(B)}\int_{\omega\in B} K(\cdot, \omega)d\nu(\omega)$$
and
$$ \mu_2  = \frac{1}{\nu(\mathcal{M} \setminus B)}\int_{\omega\in \mathcal{M}\setminus B} K(\cdot, \omega)d\nu(\omega).$$
Observe that $\mu_1$ and $\mu_2$ are conformal measures as well since $K(L_\phi f, \omega) = K(f,\omega)$ for every $f\in C_c(X^+)$ and $\omega\in \mathcal{M}$.
 Since 
$$\mu =\nu(B) \mu_1 + \nu(\mathcal{M} \setminus B)\mu_2$$ and $\mu$ is extremal, we have that $\mu_1 ,\mu_2\propto \mu$. Since $\mu_1([o]) = \mu_2([o])=\mu([o])$, we have that $\mu_1=\mu_2=\mu$. However, $\mu_1$ and $\mu_2$ charge $B$ and $\mathcal{M}\setminus B$ respectively, which contradicts the existence of the set $B$. In particular, for all Borel $B \subseteq \mathcal{M}$, $\nu(B) \in \{0, \nu(\mathcal{M})\}$, whence 
$\nu \propto\ \delta_\omega$, for some $\omega\in \mathcal{M}$.
\end{proof}
\begin{cor}
\label{cor:extremal_points}
Let $(X^{+},T)$ be a transitive locally compact one-sided TMS and let $\phi$ be a $\lambda$-transient potential function with summable variations.  Let $\mu\in \conf(\lambda)$ and assume that $\mu$ is extremal. Then,  then there exists $\omega\in \mathcal{M}(\lambda)$ s.t. $\nu \propto \delta_\omega$.
\end{cor}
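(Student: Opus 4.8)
The plan is to obtain this as an immediate composition of the two preceding results, namely the integral representation of Theorem \ref{thm:boundary_rep_theorem} and its rigidity at extremal points established in Lemma \ref{lemma:extremal_points}. The corollary is stated in terms of a measure $\nu$ which is not introduced in its own hypothesis, so the first task is simply to produce that $\nu$. Since $\mu \in \conf(\lambda)$, Theorem \ref{thm:boundary_rep_theorem} furnishes a finite positive Borel measure $\nu$ on the Martin boundary $\mathcal{M}(\lambda)$ such that
$$\mu(f) = \int_{\mathcal{M}(\lambda)} K(f,\omega|\lambda)\, d\nu(\omega), \quad \forall f\in C_c(X^+).$$
Recalling Definition \ref{def:omega_measure}, which sets $\mu_\omega(f) = K(f,\omega|\lambda)$, this is exactly the representation $\mu = \int_{\mathcal{M}} \mu_\omega\, d\nu(\omega)$ that appears as the standing hypothesis of Lemma \ref{lemma:extremal_points}.

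Next I would invoke the extremality of $\mu$. The representing measure $\nu$ just produced is positive, finite and Borel on $\mathcal{M}(\lambda)$, so it meets verbatim the input requirements of Lemma \ref{lemma:extremal_points}. Applying that lemma to the extremal measure $\mu$ and its representation yields $\omega \in \mathcal{M}(\lambda)$ with $\nu \propto \delta_\omega$, which is precisely the asserted conclusion. Normalizing afterward by the relation $\mu_\omega([o]) = 1$ (so that in fact $\nu = \mu([o])\,\delta_\omega$) identifies $\mu$ itself with a scalar multiple of the boundary measure $\mu_\omega$.

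I do not expect any substantive obstacle here, since the corollary is the straightforward concatenation of an existence statement (every $\lambda$-conformal measure admits a boundary representation) with a uniqueness/rigidity statement (for extremal $\mu$ that representation is a point mass). The only point requiring a moment's attention is bookkeeping: one must confirm that the $\nu$ delivered by Theorem \ref{thm:boundary_rep_theorem} is the same type of object consumed by Lemma \ref{lemma:extremal_points}. This holds because both statements concern finite positive Borel measures on the single boundary $\mathcal{M}(\lambda)$, and the kernel $K(\cdot,\omega|\lambda)$ integrated in the theorem is literally the measure $\mu_\omega$ of the lemma, so the hypotheses align with no modification.
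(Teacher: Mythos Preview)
Your proposal is correct and matches the paper's own proof exactly: the paper writes simply ``The corollary follows by Theorem \ref{thm:boundary_rep_theorem} and Lemma \ref{lemma:extremal_points}.'' Your additional bookkeeping remarks (identifying $K(\cdot,\omega|\lambda)$ with $\mu_\omega$ and the normalization $\nu=\mu([o])\delta_\omega$) are accurate elaborations of this one-line argument.
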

\begin{proof}
The corollary follows by Theorem \ref{thm:boundary_rep_theorem} and Lemma \ref{lemma:extremal_points}.
\end{proof}
\begin{defn}
The \textit{$\lambda$-Minimal Martin Boundary} $\mathcal{M}_m(\lambda) $ is the set of all  points $\omega\in  \mathcal{M}(\lambda)$ with $\mu_\omega$ extremal in $\conf(\lambda)$.  
\end{defn}
The minimal boundary is a Borel set, see Proposition \ref{prop:minimal_borel} in the appendix. Corollary  \ref{cor:extremal_points} yields that $\{c \mu_\omega : \omega \in \mathcal{M}_m(\lambda), c \geq0\}$ is in fact the collection of all extremal points of $\conf(\lambda)$.
\begin{cor}
\label{cor:ind_of_ref_fucntion}
Let $(X^{+},T)$ be a transitive locally compact one-sided TMS and let $\phi$ be a $\lambda$-transient potential function with summable variations.  Then, the $\lambda$-minimal Martin boundary does not depend on the choice of the origin state $o$, namely if $\mathcal{M}_m^1(\lambda)$ and $\mathcal{M}_m^2(\lambda)$ are the minimal boundaries which correspond to origin points $o_1$ and $o_2$ respectively, then for every $\omega_1\in \mathcal{M}_m^1(\lambda)$ there exists a unique $\omega_2 \in \mathcal{M}_m^1(\lambda)$ s.t. $\mu^{1}_{\omega_1}\propto \mu^{2}_{\omega_2}$ where $\mu_{\omega_{i}}^i = \lim_{x\rightarrow \omega_i}\frac{K(\cdot, x)}{K(1_{[o_i]},x)}$.
\end{cor}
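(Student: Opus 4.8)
The plan is to avoid comparing the two metrics $\rho^1,\rho^2$ or the two compactifications directly, and instead to argue intrinsically through the cone $\conf(\lambda)$, which is defined without any reference to a choice of origin. First I would record that changing the origin only rescales the kernel pointwise: for every $f\in C_c(X^+)$ and $x\in X^+$,
$$ K^2(f,x|\lambda) = \frac{G(f,x|\lambda)}{G(1_{[o_2]},x|\lambda)} = \frac{K^1(f,x|\lambda)}{K^1(1_{[o_2]},x|\lambda)}, $$
so the two families of boundary functionals differ only by scalars depending on $x$. By Proposition \ref{claim:compactification_prop}\ref{claim:compactification_prop_6}, each $\mu^1_{\omega_1}$ and each $\mu^2_{\omega_2}$ is a member of the single cone $\conf(\lambda)$, and by construction $\mu^i_{\omega_i}([o_i])=K^i(1_{[o_i]},\omega_i|\lambda)=1$.

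The key observation is that the collection of extremal rays of $\conf(\lambda)$ is an intrinsic feature of the cone and hence independent of the origin used to parametrize it. By the remark following Corollary \ref{cor:extremal_points} (which packages Theorem \ref{thm:boundary_rep_theorem} together with Lemma \ref{lemma:extremal_points}), this collection equals $\{c\,\mu^1_\omega : \omega\in\mathcal{M}^1_m(\lambda),\ c\geq 0\}$ when the kernel is built from $o_1$, and equals $\{c\,\mu^2_\omega : \omega\in\mathcal{M}^2_m(\lambda),\ c\geq 0\}$ when built from $o_2$. Since both sets describe the same extremal measures of the same cone, they coincide, so $\mathcal{M}^1_m$ and $\mathcal{M}^2_m$ parametrize one and the same set of extremal conformal measures.

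From this the bijection would follow quickly. Given $\omega_1\in\mathcal{M}^1_m(\lambda)$, the measure $\mu^1_{\omega_1}$ is extremal and nonzero (it charges $[o_1]$), so by the previous paragraph there exist $\omega_2\in\mathcal{M}^2_m(\lambda)$ and $c>0$ with $\mu^1_{\omega_1}=c\,\mu^2_{\omega_2}$, i.e. $\mu^1_{\omega_1}\propto\mu^2_{\omega_2}$. For uniqueness I would suppose also $\mu^1_{\omega_1}\propto\mu^2_{\omega_2'}$, whence $\mu^2_{\omega_2}\propto\mu^2_{\omega_2'}$; since both assign mass $1$ to $[o_2]$ the proportionality constant is $1$, so $\mu^2_{\omega_2}=\mu^2_{\omega_2'}$, and the injectivity of $\omega\mapsto\mu^2_\omega$ on $\mathcal{M}^2(\lambda)$ (distinct boundary points give distinct kernel values on some cylinder indicator, as noted in the discussion preceding Lemma \ref{lemma:extremal_points}) forces $\omega_2=\omega_2'$. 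Swapping the roles of $o_1,o_2$ gives the inverse map, so the correspondence is a genuine bijection between minimal boundaries.

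The step that carries all the weight, and where care is needed, is the claim that the minimal boundary \emph{exhausts} the extremal rays rather than merely injecting into them; this completeness is exactly what the integral representation of Theorem \ref{thm:boundary_rep_theorem} combined with Lemma \ref{lemma:extremal_points} supplies. Without it one would obtain an injection in only one direction and could not conclude that the two parametrizations cover the same measures.
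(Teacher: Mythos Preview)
Your proposal is correct and follows essentially the same line as the paper's own proof: both arguments rest on the observation that the minimal Martin boundary, for either choice of origin, parametrizes exactly the extremal rays of the origin-independent cone $\conf(\lambda)$ (via Corollary \ref{cor:extremal_points} and the remark after it), so the two parametrizations must match up to scaling. Your write-up simply fills in more detail---the explicit rescaling identity, the uniqueness argument via the normalization $\mu^i_{\omega_i}([o_i])=1$, and the injectivity of $\omega\mapsto\mu_\omega$---than the paper's two-sentence proof provides.
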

\begin{proof}
For every $i$, every point of the minimal Martin boundary w.r.t. an origin point $o_{i}$  corresponds to an extremal conformal measure (up to a constant) and vice versa. Therefore the choice of the origin state only affects the normalizing factor.\end{proof} 

It is natural to ask if the measure $\nu$ in Theorem \ref{thm:boundary_rep_theorem} is unique. In general, $\nu$ may be non-unique. However,  if we restrict the support of $\nu$\ to $\mathcal{M}_m(\lambda)$, then we do obtain uniqueness.
\begin{thm}
\label{thm:minimal_kernel_rep}
Let $(X^{+},T)$ be a transitive locally compact one-sided TMS and let $\phi$ be a $\lambda$-transient potential function with summable variations. Let $\mu\in \conf(\lambda)$. Then, there exists a  \textbf{unique} finite measure $\nu$ on $\mathcal{M}_m(\lambda)$ s.t.
$$\mu(f) = \int_{\mathcal{M}_m(\lambda)}K(f,\omega|\lambda)d\nu(\omega). $$
\end{thm}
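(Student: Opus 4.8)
The plan is to realize $\conf(\lambda)$ as the cone over a compact convex base and to prove that this base is a Choquet simplex, so that the Choquet--Meyer uniqueness theorem applies. First I would normalize and set $\mathcal{B} = \{\mu\in\conf(\lambda):\mu([o])=1\}$ (nonempty and meaningful since transitive nonzero conformal measures charge every cylinder). Using Lemma \ref{lemma:kernel_uniform_boundness} together with Theorem \ref{thm:boundary_rep_theorem}, which gives $\mu(f)=\int_{\mathcal{M}} K(f,\omega|\lambda)d\nu(\omega)$ with $\nu(\mathcal{M})=\mu([o])$ because $K(1_{[o]},\omega|\lambda)=1$, every $\mu\in\mathcal{B}$ satisfies $\mu(f)\le C_f$ for each $f\in C_c(X^+)$; hence $\mathcal{B}$ is weak$^*$-bounded, and since conformality and the normalization are weak$^*$-closed, $\mathcal{B}$ is weak$^*$-compact, and metrizable because $C_c(X^+)$ is separable. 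The map $\omega\mapsto\mu_\omega$ is a continuous injection of $\mathcal{M}(\lambda)$ into $\mathcal{B}$ (continuity from the $\rho$-continuity of $K(f,\cdot|\lambda)$ in Proposition \ref{claim:compactification_prop}, injectivity from the observation preceding Lemma \ref{lemma:extremal_points}), and by Corollary \ref{cor:extremal_points} with the definition of $\mathcal{M}_m(\lambda)$ its restriction to $\mathcal{M}_m(\lambda)$ is a Borel isomorphism onto the extreme points $\mathrm{ext}(\mathcal{B})$.

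With this dictionary, existence of a representing measure supported on $\mathcal{M}_m(\lambda)$ is Choquet's theorem applied to $\mathcal{B}$: every $\mu\in\mathcal{B}$ is the barycenter of a probability measure $P$ on $\mathrm{ext}(\mathcal{B})$, and pulling $P$ back along $\omega\mapsto\mu_\omega$ produces the desired $\nu$ on $\mathcal{M}_m(\lambda)$. Uniqueness is then equivalent, by the Choquet--Meyer theorem, to $\mathcal{B}$ being a simplex, i.e.\ to $\conf(\lambda)$ being a lattice in its own order $\mu\le_c\sigma \Leftrightarrow \sigma-\mu\in\conf(\lambda)$. Since both members of a conformal difference already satisfy the eigenequation, $\mu\le_c\sigma$ is simply the measure order $\mu\le\sigma$, so it suffices to exhibit a greatest lower bound in $\conf(\lambda)$ for any pair $\mu_1,\mu_2$.

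The candidate is $(\mu_1\wedge\mu_2)^*$, the conformal part produced by the Riesz decomposition of Lemma \ref{lemma:riesz} applied to the measure-theoretic minimum $\mu_1\wedge\mu_2$, which is $\lambda$-excessive by Corollary \ref{cor:min_measures}. I would verify the two defining properties: it is a lower bound, since $(\mu_1\wedge\mu_2)^* \le \mu_1\wedge\mu_2 \le \mu_i$ and it is conformal; and it is the greatest such, since any conformal $\sigma$ with $\sigma\le\mu_1,\sigma\le\mu_2$ satisfies $\sigma\le\mu_1\wedge\mu_2$ by the universal property in Lemma \ref{lemma:min_measures}, after which applying the map $\eta\mapsto\eta^*=\lim_n\lambda^{-n}(L_\phi^*)^n\eta$ --- which is monotone because $L_\phi^*$ is positive, and which fixes conformal inputs --- yields $\sigma=\sigma^*\le(\mu_1\wedge\mu_2)^*$. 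Hence $\conf(\lambda)$ is a lattice cone, $\mathcal{B}$ is a simplex, and the representing measure on $\mathrm{ext}(\mathcal{B})\cong\mathcal{M}_m(\lambda)$ is unique.

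The main obstacle I anticipate is precisely this lattice step: the pointwise minimum $\mu_1\wedge\mu_2$ lands in the excessive cone rather than in $\conf(\lambda)$, so the whole argument hinges on passing to its conformal part via Lemma \ref{lemma:riesz} and confirming that this Riesz projection is order-preserving and acts as the identity on conformal measures. A secondary point requiring care is the Borel-isomorphism claim identifying $\mathrm{ext}(\mathcal{B})$ with $\mathcal{M}_m(\lambda)$, which is needed so that the abstract Choquet and Choquet--Meyer statements translate faithfully into the uniqueness of a measure $\nu$ supported on $\mathcal{M}_m(\lambda)$.
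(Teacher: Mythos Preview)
Your proposal is correct and follows essentially the same route as the paper: both arguments reduce uniqueness to showing that $\conf(\lambda)$ is a lower (semi)lattice, with the meet of $\mu_1,\mu_2$ constructed exactly as you suggest, namely $\lim_n \lambda^{-n}(L_\phi^*)^n(\mu_1\wedge\mu_2)$, verified via Corollary~\ref{cor:min_measures} and the monotonicity/idempotence of this limit on conformal inputs. The only cosmetic difference is that the paper invokes Furstenberg's packaged version of Choquet's theorem (which directly yields existence and uniqueness for lower-semilattice cones of measures) rather than Choquet plus Choquet--Meyer separately, so it does not need to spell out the compactness of $\mathcal{B}$ or the Borel identification $\mathrm{ext}(\mathcal{B})\cong\mathcal{M}_m(\lambda)$ as explicitly as you do.
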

\begin{proof}
Assume w.l.o.g. that $\lambda=1$.
To prove the theorem, we first recall the notion of a lower semilattice.
\begin{defn}
\label{defn:lattice}
Let $V$ be a topological vector space 
 with a partial ordering $\leq$. An element $u\in V$ is the called the \textit{meet} of two elements $v_1\in V$ and $v_2\in V$  if for every $w\in V$ with $w\leq v_i$ ($i=1,2$) we have that $w\leq u$. The vector space $V$ is called a \textit{lower semilattice} if every two elements have a meet. For more on on this subject, see \cite{clifford_1961}.\end{defn}
Our aim is to apply the following version of Choquet's theorem;
\begin{thm}[Furstenberg \cite{furstenberg_1965}]
Let $V$ be a weak$^*$-closed cone of positive measures on a separable, locally compact space and let $\mathcal{E}$ denote the extremal rays of $V$. Suppose that there is a positive function of compact support $\psi$ with $\mu(\psi)>0$ for all $\mu\in V$, $\mu\neq 0$, and let $V_1=\{\mu \in V: \mu(\psi)=1\}$. Then, for each $\mu\in V$ there exists a measure $\nu$ on a Borel subset $B\subseteq\mathcal{E} \cap V_1$ s.t. $\mu = \int_{B}\mu' d\nu(\mu')$.
If $V$ is a lower semilattice, then the measure $\nu$ is unique.
\end{thm}
It is easy to verify that $\conf$ is weak$^*$-closed. By Lemma \ref{lemma:positive_on_cylinders},   $\mu(1_{[o]})>0$ for every $\mu\in \conf$. Thus, to obtain  uniqueness it suffices to show that $\conf$ is a lower semilattice.

Let $\mu_1, \mu_2 \in \conf$ and let $\mu'=\mu_1 \wedge \mu_2$. By Corollary \ref{cor:min_measures}, $\mu'$ is excessive and the limit measure
$\mu'' = \lim_{n\rightarrow\infty}(L_\phi^*)^n \mu'$ exists.  Clearly $\mu'' \geq 0$. Since $L_\phi f\in C_c^{+}(X^{+})$  for all $f\in C_c^+(X^+)$,
$$ (L_\phi^* \mu'') (f) = \mu''(L_\phi f) = \lim_{n\rightarrow\infty}((L_\phi^*)^n \mu' )(L_\phi f) =\lim_{n\rightarrow\infty}((L_\phi^*)^{n+1} \mu' )( f)=\mu''(f) $$ 
and $\mu'' \in \conf$.

Suppose $\mu_3 \in \conf$ with $\mu_3 \leq \mu_i$, $i=1,2$. Then,  by Lemma \ref{lemma:min_measures}, $\mu_3 \leq \mu'$. Since $L_\phi^* \mu_3 = \mu_3$, we have that $\mu_3 \leq (L_\phi^*)^n \mu'$ for every $n$, which implies that $\mu_3 \leq \mu''$ and $\conf$ is indeed a lower semilattice. 
\end{proof}

\section{Convergence to the boundary}
\label{section:convergence}
We saw in the previous section that every $\lambda$-conformal measure $\mu$ can be uniquely presented in the form 
$$\mu = \int_{\mathcal{M}_m(\lambda)}\mu_\omega d\nu(\lambda) $$
for some boundary measure $\nu$ on the minimal Martin boundary. In this section we show that $\nu$ is determined by the $\mu$-almost sure behaviour of $T^nx$ as $n\rightarrow\infty$:
\begin{thm}
\label{thm:conv_to_boundary}
Let $(X^{+},T)$ be a transitive locally compact one-sided TMS and let $\phi$ be a $\lambda$-transient potential function with summable variations. Let $\mu\in \conf(\lambda)$. Then, 
\begin{enumerate}
\item 
For $\mu$-a.e. $x\in X^{+}$, the $\rho$-limit $\lim_{n\rightarrow\infty}T^nx$ exists and belongs to $\mathcal{M}_m(\lambda)$.
\item
If $\mu = \mu_\omega$ for some $\omega\in \mathcal{M}_m(\lambda)$, then for $\mu_\omega$-a.e. $x\in X^{+}$, $T^nx \rightarrow\omega$.
\item
For every Borel set $E \subseteq \mathcal{M}(\lambda)$,
\begin{equation}
\label{eq:nu_prop}
\nu(E) = \mu\{x\in [o] : \exists \omega \in E \text{ s.t. }T^nx \xrightarrow[]{\rho}\omega\}. 
\end{equation}
\end{enumerate}
\end{thm}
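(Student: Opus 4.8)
The plan is to reduce the whole statement to its heart---the behaviour of a single \emph{extremal} conformal measure $\mu_\omega$, $\omega\in\mathcal M_m(\lambda)$---and then to recover items (1) and (3) by integrating item (2) against the representing measure furnished by Theorem~\ref{thm:minimal_kernel_rep}. Throughout I may assume $\lambda=1$. The first, and routine, ingredient is \emph{escape to infinity}. Decompose the starting point by its initial symbol: for fixed $a,b\in S$, the transfer identity $\int (g\circ T^n)h\,d\mu=\int g\,L_\phi^n h\,d\mu$ (a consequence of $L_\phi((g\circ T)h)=g\,L_\phi h$ and conformality) gives
$$\sum_{n\ge 0}\mu\{x\in[b]:T^n x\in[a]\}=\sum_{n\ge0}\int_{[a]}L_\phi^n 1_{[b]}\,d\mu=\int_{[a]}G(1_{[b]},\cdot\,)\,d\mu<\infty,$$
since $1_{[b]}\in C_c^+(X^+)$ makes $G(1_{[b]},\cdot)$ finite and continuous (transience) and $[a]$ is compact with $\mu([a])<\infty$. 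Borel--Cantelli then shows that for $\mu$-a.e.\ $x\in[b]$ the orbit visits each cylinder $[a]$ finitely often; taking the countable unions over $a$ and over $b$ yields $T^n x\to\infty$ for $\mu$-a.e.\ $x$. By Proposition~\ref{claim:compactification_prop}\ref{claim:compactification_prop_escapes_compact} every $\rho$-limit point of such an orbit lies in $\mathcal M(\lambda)$.

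The crux of item (1) is to upgrade ``subsequential limits in $\mathcal M$'' to a genuine $\rho$-limit, i.e.\ to prove that for each fixed word $w\in S^*$ the bounded sequence $n\mapsto K(1_{[w]},T^n x|\lambda)$ (bounded by Lemma~\ref{lemma:kernel_uniform_boundness}) converges $\mu$-a.e.; since the countable family $\{K(1_{[w]},\cdot)\}_{w\in S^*}$ generates the metric $\rho$, this gives $\rho$-convergence of $T^nx$ to a point $\Omega(x)\in\mathcal M$. I would run this through the decreasing tail filtration $\mathcal T_n=\sigma(x_n,x_{n+1},\dots)$, with respect to which $K(1_{[w]},T^n\cdot)$ is $\mathcal T_n$-measurable. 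The conditional law of the prepended symbol, computed from conformality, is $P_\mu(x_n=a\mid T^{n+1}x=z)=1_{\{a\to z_0\}}e^{\phi(az)}L_\phi^n 1(az)/L_\phi^{n+1}1(z)$, and feeding this into the resolvent identity $G(f,\cdot)=f+\lambda^{-1}L_\phi G(f,\cdot)$ shows that $K(1_{[w]},T^n\cdot)$ is a reverse martingale \emph{up to an error coming from replacing $e^{\phi}$-weights by the $L_\phi^n1$-weights}. This is the main obstacle: the process is not an exact martingale because $\mu$ is only conformal rather than shift-invariant. I expect the correct remedy to be that summable variations of $\phi$ (the influence of the remote past being bounded by $\sum_{m\ge n}Var_m(\phi)$) make it a bounded reverse \emph{quasi}-martingale whose step-defects have summable total oscillation, so that Rao-type quasi-martingale convergence applies. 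Because $K(1_{[w]},\cdot)$ extends continuously to $\widehat{X^+}(\lambda)$ (Proposition~\ref{claim:compactification_prop}), the limit is $K(1_{[w]},\Omega(x))$, and $\Omega(x)\in\mathcal M(\lambda)$ by the escape established above.

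For item (2), fix extremal $\mu_\omega$. The map $\Omega$ is $T$-invariant, $\Omega\circ T=\Omega$ a.e., because $\Omega(Tx)=\lim_n T^{n+1}x=\Omega(x)$; moreover for every preimage $Ty=x$ one has $\Omega(y)=\Omega(x)$, so each set $A_E:=\Omega^{-1}(E)$ ($E\subseteq\mathcal M$ Borel) satisfies $1_{A_E}(y)=1_{A_E}(x)$ whenever $Ty=x$. Consequently $L_\phi(1_{A_E}g)=1_{A_E}L_\phi g$ and the restriction $\mu_\omega|_{A_E}$ is again conformal. If $\Omega$ took at least two values on sets of positive measure, choosing such an $E$ would split $\mu_\omega=\mu_\omega|_{A_E}+\mu_\omega|_{A_E^c}$ into two nonzero, mutually singular---hence non-proportional---conformal measures, contradicting extremality. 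Thus $\Omega\equiv\omega'$ is $\mu_\omega$-a.e.\ constant for some $\omega'\in\mathcal M$. It remains to identify $\omega'=\omega$. Here I would argue that the finite pushforward $\vartheta:=\Omega_*(\mu_\omega|_{[o]})$, which has total mass $\mu_\omega([o])=K(1_{[o]},\omega)=1$, is a representing measure for $\mu_\omega$: by bounded convergence $\int_{\mathcal M}K(f,\cdot)\,d\vartheta=\lim_n\int_{[o]}K(f,T^nx)\,d\mu_\omega(x)$, and the delicate point is the identity $\lim_n\int_{[o]}K(f,T^nx)\,d\mu_\omega(x)=\mu_\omega(f)$, which I would extract by tracking, through the Riesz decomposition of Lemma~\ref{lemma:riesz} and the weak$^*$-construction of the representing measure in the proof of Theorem~\ref{thm:boundary_rep_theorem}, that the mass of $\mu_\omega$ escapes to the boundary precisely in the direction recorded by $\Omega$. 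Granting this, $\vartheta$ represents $\mu_\omega$; since $\Omega\equiv\omega'$ we get $\vartheta=\delta_{\omega'}$, and comparing with the trivial representation $\mu_\omega=\int K(\cdot,\cdot)\,d\delta_\omega$ via Lemma~\ref{lemma:extremal_points} (extremal measures are represented by a point mass) together with the injectivity of $\omega\mapsto\mu_\omega$ forces $\omega'=\omega$.

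Finally I would assemble items (1) and (3) from (2). Writing the unique representation $\mu=\int_{\mathcal M_m(\lambda)}\mu_\omega\,d\nu(\omega)$ of Theorem~\ref{thm:minimal_kernel_rep}, item (2) applied to $\nu$-a.e.\ $\omega$ shows that for $\mu$-a.e.\ $x$ the orbit $T^nx$ converges to a point of $\mathcal M_m(\lambda)$, which is item (1). For item (3), for any Borel $E\subseteq\mathcal M(\lambda)$,
$$\mu\{x\in[o]:\Omega(x)\in E\}=\int_{\mathcal M_m(\lambda)}\mu_\omega\{x\in[o]:\Omega(x)\in E\}\,d\nu(\omega)=\int_E \mu_\omega([o])\,d\nu(\omega)=\nu(E),$$
using $\Omega=\omega$ $\mu_\omega$-a.e.\ and $\mu_\omega([o])=K(1_{[o]},\omega|\lambda)=1$. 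The two places I expect real work are the $\mu$-a.e.\ convergence of $K(1_{[w]},T^n x)$ (the quasi-martingale estimate driven by summable variations) and the identification step $\lim_n\int_{[o]}K(f,T^nx)\,d\mu_\omega=\mu_\omega(f)$; everything else is bookkeeping built on the compactification and the integral representation already in hand.
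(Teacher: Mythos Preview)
Your overall architecture---prove (2) for extremal $\mu_\omega$, then integrate against the representation of Theorem~\ref{thm:minimal_kernel_rep} to get (1) and (3)---matches the paper, as does the escape-to-infinity step and the final assembly. Where you diverge is in the route to (2), and the two places you flag as ``real work'' are exactly where the paper takes a different, shorter path that sidesteps both.

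The paper never establishes $\mu$-a.e.\ convergence of $K(1_{[w]},T^nx)$ by any (quasi-)martingale argument, and it never has to separately identify a limit point $\omega'$ with $\omega$. Instead it proves $T^nx\to\omega$ $\mu_\omega$-a.e.\ directly by contradiction. For $\epsilon>0$ set $F_\epsilon=\{x:\rho(x,\omega)\ge\epsilon\}$ and $A_\epsilon=(F_\epsilon)_\infty$; since $A_\epsilon$ is $T$-invariant and $\mu_\omega$ is ergodic (your extremality splitting is exactly Lemma~\ref{lemma:erogidicty_of_extremal_points}), either $\mu_\omega(A_\epsilon)=0$ or $\mu_\omega=\mu_\omega|_{A_\epsilon}$. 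The key input is Lemma~\ref{lemma:rep_support_closure}: for any Borel $F$, the restriction $\mu|_{F_\infty}$ has a Martin representation whose representing measure is supported in $\overline{F}\cap\mathcal M$. (This is proved by observing that $\mu|_{F_+}$ is excessive, applying the Riesz decomposition of Lemma~\ref{lemma:riesz}, noting that the potential part sits on $F_+\setminus T^{-1}F_+\subseteq F$, and then pushing $(L_\phi^*)^n\mu|_{F_+}\to\mu|_{F_\infty}$ onto the boundary.) Applied to $F=F_\epsilon$ in the second alternative, one gets $\mu_\omega=\int K(\cdot,\omega')\,d\nu(\omega')$ with $\mathrm{supp}(\nu)\subseteq\overline{F_\epsilon}\cap\mathcal M$; extremality and Lemma~\ref{lemma:extremal_points} force $\nu\propto\delta_\omega$, but $\omega\notin\overline{F_\epsilon}$---contradiction. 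Hence $\mu_\omega(A_\epsilon)=0$ for every $\epsilon$, which is exactly $T^nx\to\omega$ a.e.

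So the paper's argument never needs a pathwise convergence theorem as a precursor to (2): convergence to $\omega$ and the identification of the limit come out simultaneously from the support constraint on the representing measure. Your quasi-martingale route is plausible and would, if completed, give a proof that is more ``probabilistic'' and less reliant on the representation machinery of Section~\ref{section:integration}; but as you yourself note, neither the summability of the martingale defects nor the limit identity $\lim_n\int_{[o]}K(f,T^nx)\,d\mu_\omega=\mu_\omega(f)$ is actually established in your sketch, and both are genuinely nontrivial. The paper's approach trades these analytic estimates for one structural lemma about where the representing measure of $\mu|_{F_\infty}$ must live.
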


When $\phi$ is $\lambda$-transient, all $\lambda$-conformal measures admit a dissipative behaviour.
For a set $F \subseteq X^{+}$ let 
$$F_\infty = \{x\in X^{+} : T^nx \in F \text{ for infinitely many } n\geq0\}.$$
\begin{prop}
\label{prop:dissipative_on_compacts}
Let $(X^{+},T)$ be a transitive locally compact one-sided TMS and let $\phi$ be a $\lambda$-transient potential function with summable variations.  Let $F \subseteq X^{+}$ be a compact set. Then, for every $\mu\in \conf(\lambda)$, $\mu(F_{\infty})=0$.\end{prop}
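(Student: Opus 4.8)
The plan is to reduce to $\lambda=1$ (via the standard equivalence $L_{\phi-\log\lambda}^*\mu=\mu\iff L_\phi^*\mu=\lambda\mu$) and then run a Borel--Cantelli argument in which transience enters precisely through the finiteness of the Green's function. So fix $\mu\in\conf(1)$ and a compact $F\subseteq X^+$. Since $X^+$ is partitioned into the cylinders $[a]$, $a\in S$, it suffices to prove $\mu([a]\cap F_\infty)=0$ for each state $a$: then $\mu(F_\infty)\leq\sum_{a\in S}\mu([a]\cap F_\infty)=0$.

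The heart of the argument is a \emph{conformal pull-back} identity. For each state $a$ and each $n\geq 0$ the set $[a]\cap T^{-n}F$ is compact (the intersection of the compact cylinder $[a]$ with the closed set $T^{-n}F$), and a direct computation using $T^n y=x$ gives the pointwise identity $L_\phi^n 1_{[a]\cap T^{-n}F}=1_F\cdot L_\phi^n 1_{[a]}$. Applying conformality $(L_\phi^*)^n\mu=\mu$ to the compactly supported indicator $1_{[a]\cap T^{-n}F}$ then yields
\[
\mu([a]\cap T^{-n}F)=\mu\!\left(L_\phi^n 1_{[a]\cap T^{-n}F}\right)=\int_F \left(L_\phi^n 1_{[a]}\right)\,d\mu .
\]
Summing over $n$ and interchanging sum and integral by Tonelli (all terms are nonnegative), the right-hand side telescopes into the Green's function:
\[
\sum_{n=0}^\infty \mu([a]\cap T^{-n}F)=\int_F G(1_{[a]},x)\,d\mu(x).
\]
Here transience is decisive: $1_{[a]}\in C_c^+(X^+)$, so by $\lambda$-transience together with Lemma \ref{lemma:G_continuous}, $G(1_{[a]},\cdot)$ is finite and continuous, hence bounded on the compact set $F$; combined with $\mu(F)<\infty$ (as $\mu$ is Radon and $F$ is compact), the integral is finite.

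With $\sum_n \mu([a]\cap T^{-n}F)<\infty$ in hand, the Borel--Cantelli lemma gives $\mu\!\left(\limsup_n([a]\cap T^{-n}F)\right)=0$. Since $[a]$ does not depend on $n$, this $\limsup$ equals $\{x\in[a]:T^n x\in F\text{ for infinitely many }n\}=[a]\cap F_\infty$, so $\mu([a]\cap F_\infty)=0$, and summing over $a$ finishes the proof. The one point I would write out carefully---and the main technical obstacle---is the justification that the conformality relation $\mu(L_\phi g)=\mu(g)$, stated in the paper for indicators of compact sets, applies to the iterated functions $g=L_\phi^k 1_{[a]\cap T^{-n}F}$ appearing above. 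This holds because each such $g$ is a nonnegative, bounded, compactly supported measurable function (its support lies in the compact set $T^k(\mathrm{supp}\,1_{[a]\cap T^{-n}F})$, and on that set $g$ is bounded since each point has only finitely many preimages in the relevant compact set and $\phi$ is bounded there), so one extends the relation from compact-set indicators to simple functions by linearity and then to $g$ by monotone convergence, using that $\mu$ is finite on compacts. Everything else is a routine assembly of transience (finiteness of $G$) and Borel--Cantelli.
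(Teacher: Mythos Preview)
Your proof is correct and follows essentially the same route as the paper's: the conformal pull-back identity $\mu(1_{[a]}\cdot 1_F\circ T^n)=\int_F L_\phi^n 1_{[a]}\,d\mu$, summed over $n$ and bounded via transience (finiteness and continuity of the Green's function on compacts), followed by Borel--Cantelli. The only structural difference is that the paper first reduces to the case $F=[b]$ a single cylinder (so that all functions stay in $C_c(X^+)$ and the conformality relation applies without the measurable-extension step you discuss), and only afterwards covers a general compact $F$ by finitely many cylinders; your direct treatment of compact $F$ is equally valid once the routine extension of $L_\phi^*\mu=\mu$ to bounded compactly supported measurable functions is noted.
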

\begin{proof}
This result is well-known, see \cite{aaronson_1997}. For completeness, we provide a proof in the appendix.
\end{proof}
For a measure $\mu$ and a set $F \subseteq X^{+}$ let
$\mu_{|F}(f) = \mu(f \cdot 1_F)$,
the restriction of $\mu$ to $F$.
A measure $\mu$ on $X^+$ (possibly non-invariant or infinite) is said to be
\textit{ergodic} if for every measurable set $A\subseteq X^+$ with $T^{-1}A=A$, $\mu(A)=0$ or $\mu(X^+\setminus A)=0$. 
\begin{lem}
\label{lemma:erogidicty_of_extremal_points}
Let $(X^{+},T)$ be a transitive locally compact one-sided TMS and let $\phi$ be a $\lambda$-transient potential function with summable variations.  Let $\omega \in \mathcal{M}_m(\lambda)$. Then, $\mu_\omega$ is ergodic.
\end{lem}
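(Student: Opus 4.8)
The plan is to deduce ergodicity directly from extremality: I will show that for $\omega\in\mathcal{M}_m(\lambda)$ every fully invariant Borel set is either $\mu_\omega$-null or $\mu_\omega$-co-null, by writing the restrictions of $\mu_\omega$ to such a set and its complement as conformal measures that sum to $\mu_\omega$.

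First I would fix a Borel set $A$ with $T^{-1}A=A$ and consider the restrictions $\mu_\omega|_A$ and $\mu_\omega|_{X^+\setminus A}$ (in the notation $\mu_{|F}(f)=\mu(f\cdot 1_F)$). These are positive Radon measures dominated by $\mu_\omega$ with $\mu_\omega=\mu_\omega|_A+\mu_\omega|_{X^+\setminus A}$. The crucial observation is the pointwise identity
$$ L_\phi(g\cdot 1_A)=(L_\phi g)\cdot 1_A $$
valid for every function $g$ and every $A$ with $T^{-1}A=A$: if $Ty=x$ then $y\in A\iff x\in A$, so $1_A(y)=1_A(x)$ for all preimages $y$ of $x$, and the factor $1_A(x)$ can be pulled out of the defining sum of $L_\phi$.

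Next I would use this identity to verify that $\mu_\omega|_A$ is $\lambda$-conformal. Since both $L_\phi^*\mu_\omega$ and $\lambda\mu_\omega$ are Radon measures agreeing on compact sets, they agree on all Borel sets by regularity, so $\mu_\omega(L_\phi 1_B)=\lambda\mu_\omega(B)$ for every Borel $B$. Then for a compact $K$,
$$ (\mu_\omega|_A)(L_\phi 1_K)=\mu_\omega\bigl((L_\phi 1_K)1_A\bigr)=\mu_\omega\bigl(L_\phi(1_{K\cap A})\bigr)=\lambda\,\mu_\omega(K\cap A)=\lambda\,(\mu_\omega|_A)(K), $$
so $L_\phi^*(\mu_\omega|_A)=\lambda(\mu_\omega|_A)$, and hence $\mu_\omega|_A\in\conf(\lambda)$. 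The same computation applies to $X^+\setminus A$, which is also fully invariant. Finally, since $\omega\in\mathcal{M}_m(\lambda)$ means $\mu_\omega$ is extremal, the decomposition $\mu_\omega=\mu_\omega|_A+\mu_\omega|_{X^+\setminus A}$ into conformal measures forces $\mu_\omega|_A\propto\mu_\omega$, say $\mu_\omega|_A=c\,\mu_\omega$. Evaluating on $X^+\setminus A$ gives $0=c\,\mu_\omega(X^+\setminus A)$, so either $c=0$, i.e.\ $\mu_\omega(A)=0$, or $\mu_\omega(X^+\setminus A)=0$; this is exactly ergodicity.

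The main obstacle I expect is the measure-theoretic bookkeeping of the third step: because $1_A$ is not continuous, conformality — defined in the paper through indicators of compact sets — must first be upgraded to an identity of measures on all Borel sets, and one must check that $L_\phi^*(\mu_\omega|_A)$ is a genuine Radon measure (finite on compacts, since it is dominated by $\lambda\,\mu_\omega$) so that it may legitimately be compared with $\lambda(\mu_\omega|_A)$ on compact sets. Everything else is a routine consequence of the definition of extremality and of $\mathcal{M}_m(\lambda)$.
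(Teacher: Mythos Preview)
Your proposal is correct and follows essentially the same route as the paper: restrict $\mu_\omega$ to a fully $T$-invariant set $A$ and its complement, use the identity $(L_\phi g)\cdot 1_A=L_\phi(g\cdot 1_A)$ (equivalently $1_A\circ T=1_{T^{-1}A}=1_A$) to check that both restrictions are $\lambda$-conformal, and then invoke extremality. The paper finishes by noting that $\mu_{|A}$ and $\mu_{|A^c}$ are mutually singular, while you evaluate the proportionality constant on $X^+\setminus A$; these endings are equivalent, and your extra remarks about extending conformality from compacts to Borel sets are a welcome bit of care that the paper leaves implicit.
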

\begin{proof}
We write $\mu=\mu_\omega$. Let $A$ be a $T$-invariant set, with $T^{-1}A = A$.   
Then, 
\begin{align*}\mu_{|A}(L_\phi 1_{E})=&  \int (L_\phi1_E )(x) 1_A(x)d\mu(x) \\
=& \int (L_\phi\left(1_E \cdot 1_A\circ T\right))(x)d\mu(x)\\
=& \int (L_\phi(1_E \cdot 1_A))(x)d\mu(x)\\
=&\lambda  \int 1_E(x) \cdot 1_A(x)d\mu(x)=\lambda\mu_{|A}(E).
\end{align*}
Similarly, $L_\phi^* \mu_{|A^c}=\lambda\mu_{|A^c}$. Clearly $\mu_{|A}, \mu_{|A^c}$ are Radon, and thus $\mu_{|A},\mu_{|A^{c}} \in \conf(\lambda)$. Since $\mu = \mu_{|A}+\mu_{|A^c}$ and $\mu$ is extremal, we must have that $\mu_{|A},\mu_{|A^{c}}\propto \mu$. Since $\mu_{|A}$ and $\mu_{|A^{c}}$ are mutually singular,  $\mu_{|A}\equiv0$ or $\mu_{|A^c}\equiv0$ which implies that either $\mu(A)=0$ or $\mu(A^c)=0$.
\end{proof}
\begin{remk}
Observe that $\mu_\omega$ is not necessarily $T$-invariant. In fact, $\mu_\omega$ is $T$-invariant iff $L_\phi 1(x)=1$ for $\mu_\omega$-a.e. $x$, see \cite{keane_1972}.
One can easily ``fix" $\mu_\omega$ to be $T$-invariant with a positive eigenfunction as a density. For more on the positive eigenfunctions see Section \ref{section:harmonic_functions}.
\end{remk}
For a set $F \subseteq X^{+}$ let 
$$F_{+}= \{x\in X^{+} : \exists n\geq0 \text{ s.t. }T^nx \in F\}.$$
We denote by $\overline{F}$  the topological closure of $F$ in $(\widehat {X^+},\rho)$.
\begin{lem}
\label{lemma:restricted_integral}
Let $(X^{+},T)$ be a transitive locally compact one-sided TMS and let $\phi$ be a $\lambda$-transient potential function with summable variations.  Let $\mu \in \conf(\lambda)$ and let $F\subseteq X^{+}$ be a Borel set. Then, there exists a finite measure $\nu$ with $supp(\nu) \subseteq \overline{F}$  s.t.
$$\mu_{|F_+}(f) = \int K(f,x|\lambda)d\nu(x), \quad \forall f\in C_c(X^{+}). $$
\end{lem}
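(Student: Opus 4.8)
The plan is to recognise $\mu_{|F_+}$ as a $\lambda$-excessive measure, apply the Riesz-type decomposition of Lemma~\ref{lemma:riesz}, and then separately control the supports of the resulting potential part and conformal part. The main computational device throughout is the change-of-variables identity for a conformal measure: for every bounded Borel $w$ and every $g\in C_c^+(X^+)$,
$$\mu\big((L_\phi^n g)\cdot w\big)=\lambda^n\,\mu\big(g\cdot(w\circ T^n)\big),$$
which follows from the pointwise equality $(L_\phi^n g)\cdot w=L_\phi^n\big(g\cdot(w\circ T^n)\big)$ together with $L_\phi^*\mu=\lambda\mu$ (extended from $C_c$ to bounded Borel functions by the usual approximation argument). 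First I would note the elementary inclusion $T^{-1}F_+\subseteq F_+$, since the forward orbit of any $T$-preimage of $x$ meets $F$ whenever that of $x$ does. Taking $w=1_{F_+}$ and $n=1$ then gives $L_\phi^*\mu_{|F_+}(f)=\lambda\,\mu(f\,1_{T^{-1}F_+})\le\lambda\,\mu_{|F_+}(f)$ for $f\in C_c^+(X^+)$, so $\mu_{|F_+}$ is $\lambda$-excessive and Lemma~\ref{lemma:riesz} yields, with $\nu':=\mu_{|F_+}-\lambda^{-1}L_\phi^*\mu_{|F_+}$ and $\mu^*:=\lim_n\lambda^{-n}(L_\phi^*)^n\mu_{|F_+}$ conformal,
$$\mu_{|F_+}(f)=\int G(f,x|\lambda)\,d\nu'(x)+\mu^*(f).$$

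Next I would identify both pieces explicitly. The same identity gives $\lambda^{-1}L_\phi^*\mu_{|F_+}=\mu_{|T^{-1}F_+}$, hence $\nu'=\mu_{|F_+\setminus T^{-1}F_+}$; and since $F_+\setminus T^{-1}F_+\subseteq F$ (a point of this set lies in $F$ but has no later return to $F$), the charge $\nu'$ is supported in $F$. Writing $G(f,x|\lambda)=K(f,x|\lambda)\,G(1_{[o]},x|\lambda)$ turns the potential term into $\int K(f,x|\lambda)\,d\nu''(x)$ with $d\nu''=G(1_{[o]},\cdot|\lambda)\,d\nu'$, a measure on $X^+$ with $supp(\nu'')\subseteq F$ and finite mass $\mu_{|F_+}(1_{[o]})-\mu^*(1_{[o]})$. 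For the conformal part, the identity with $w=1_{F_+}$ gives $\lambda^{-n}(L_\phi^*)^n\mu_{|F_+}(f)=\mu(f\,1_{T^{-n}F_+})$, and because $T^{-n}F_+=\bigcup_{k\ge n}T^{-k}F$ decreases to $F_\infty$, dominated convergence identifies the abstract limit as $\mu^*=\mu_{|F_\infty}$, the restriction of $\mu$ to the orbits that meet $F$ infinitely often.

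Finally I would locate the boundary measure of the conformal part. Since $\mu^*=\mu_{|F_\infty}\in\conf(\lambda)$, Theorem~\ref{thm:conv_to_boundary} represents it as $\int_{\mathcal{M}_m(\lambda)}K(f,\omega|\lambda)\,d\nu_{\mathcal{M}}(\omega)$ with $\nu_{\mathcal{M}}(E)=\mu^*\{x\in[o]:\exists\,\omega\in E,\ T^nx\xrightarrow[]{\rho}\omega\}$. For $\mu^*$-a.e.\ $x$ one has $x\in F_\infty$, so $T^{k}x\in F$ for infinitely many $k$, while $T^nx\xrightarrow[]{\rho}\omega(x)\in\mathcal{M}_m(\lambda)$ by part~(1); the visiting subsequence then converges to $\omega(x)$ from within $F$, forcing $\omega(x)\in\overline F$. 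Hence $\nu_{\mathcal{M}}$ is supported on $\overline F\cap\mathcal{M}(\lambda)$, and $\nu:=\nu''+\nu_{\mathcal{M}}$ is a finite measure on $\widehat{X^+}(\lambda)$ with $supp(\nu)\subseteq\overline F$ satisfying $\mu_{|F_+}(f)=\int K(f,\omega|\lambda)\,d\nu(\omega)$. The hard part is exactly this support statement for the conformal part: everything hinges on the clean identification $\mu^*=\mu_{|F_\infty}$, which converts the opaque Riesz limit into a restriction to orbits returning to $F$ infinitely often and lets the convergence-to-the-boundary theorem supply the geometry.
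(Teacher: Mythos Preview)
Your decomposition via Lemma~\ref{lemma:riesz} is exactly the paper's first move, and your identification $\nu'=\mu_{|F_+\setminus T^{-1}F_+}$ with support in $F$ is correct. The problem is the second half: to locate the support of the conformal remainder $\mu^*=\mu_{|F_\infty}$ you invoke Theorem~\ref{thm:conv_to_boundary}, but in the paper's logical order that theorem is proved \emph{after} this lemma. Indeed, the proof of Theorem~\ref{thm:conv_to_boundary} calls Lemma~\ref{lemma:rep_support_closure}, whose proof in turn calls Lemma~\ref{lemma:restricted_integral}. So your argument is circular: you are using the convergence-to-the-boundary statement to establish one of its own prerequisites.

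The paper sidesteps the whole issue of controlling $\mu^*$ by first treating the case where $F$ is compact. For compact $F$, Proposition~\ref{prop:dissipative_on_compacts} gives $\mu(F_\infty)=0$, so $\mu^*=\mu_{|F_\infty}=0$ and only the potential part $\nu''$ survives, with $supp(\nu'')\subseteq F$. For arbitrary Borel $F$, one approximates from inside by compacts $F_m$, obtains representing measures $\nu_m$ supported in $F_m\subseteq F$, checks that the $\nu_m$ are uniformly bounded (via Lemma~\ref{lemma:kernel_uniform_boundness}), and passes to a weak$^*$ limit in $\widehat{X^+}$; the limiting measure is then supported in $\overline{F}$. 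In short, the paper never has to analyse the boundary measure of $\mu_{|F_\infty}$ at this stage---that analysis is precisely what Lemma~\ref{lemma:rep_support_closure} and Theorem~\ref{thm:conv_to_boundary} will later carry out, using the present lemma as input.
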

\begin{proof}
Assume w.l.o.g. that $\lambda=1$. Assume first that $F$ is compact.
Observe that if $Tx\in F_{+}$ then $x\in F_{+}$ as well, whence $T^{-1}F_{+} \subseteq\ F_{+}$. In particular, for every $f\in C_c^+(X^{+})$, 
\begin{align*} L_{\phi}^*(\mu_{|F_+})(f) =& \int (L_\phi f)(x)1 _{F_+}(x)d\mu(x) \\
=& \int L_\phi (f \cdot (1 _{F_+}\circ T))(x)d\mu(x)\\
=&   \int f(x) \cdot( 1 _{F_+}\circ T)(x)d\mu(x)=\mu_{|T^{-1}F_+}(f) 
\leq\mu_{|F_+}(f). \end{align*}
 Hence $\mu_{|F_+}$ is excessive and by Lemma \ref{lemma:riesz} we can write
\begin{align*}\mu_{|F_+} (f) =& \int_X G(f,x) d(\mu_{|F_+} - L_\phi^* (\mu_{|F_+}))(x) +  \lim_{n\rightarrow\infty}((L_\phi^n)^*(\mu_{|F_+}))( f).  \end{align*}
 Observe that $\bigcap_{n}  T^{-n}F_+ = F_\infty$.  By the bounded convergence theorem, for every compact set $K\subseteq X^+$$$ ((L_\phi^n)^*(\mu_{|F_+}))(K) = \mu\left(  T^{-n}F_+\cap K\right)\xrightarrow[n\rightarrow\infty]{} \mu(F_\infty \cap K). $$ 
By Proposition  \ref{prop:dissipative_on_compacts}, $\mu(F_\infty \cap K)=0$, which leads to  $$\mu_{|F_+} (f)=  \int_{\widehat{X^+}} K(f,x) d\nu(x)$$
with $d\nu(x) = G(1_{[o]},x) d(\mu_{|F_+} - L_\phi^* (\mu_{|F_+}))(x)$. Since $F_+ \setminus T^{-1}F_+ \subseteq\ F$, 
$$supp(\nu) = supp(\mu_{|F_+} - L_\phi^* (\mu_{|F_+})) = supp(\mu_{|F_+\setminus T^{-1}F_+}) \subseteq F. $$

Now, assume that  $F$ is arbitrary and let $F_m $ be compact increasing sets s.t. $\mu(F \setminus \cup_m F_m)=0$. Let $\nu_m$  be a measure with $supp(\nu_m) \subseteq  F_m$ and 
$$ \mu_{|(F_{m})_+} (\cdot)= \int K(\cdot, x)d\nu_m(x).$$ 
Let $g\in C_c^{+}(X^{+})$ with $\mu(g)>0$. By Lemma \ref{lemma:kernel_uniform_boundness} there exists $c_g>0$ s.t. $K(g,x) \geq c_g$, $\forall x\in \widehat {X^+}$.  Then, 
$$\nu_m( 1) \leq c_g^{-1}\int  K(g,x)d\nu_m(x)=c_{g}^{-1} \mu_{|(F_m)_+}(g) \leq c_g^{-1}\mu(g) <\infty$$  and the measures $\nu_m$ are uniformly bounded. Working in the compact space $\widehat{X^+}$, we can find a weak${}^*$ converging sub-sequence $\nu_{m_k}\rightarrow\nu$. Since $(F_m)_+ \nearrow F_{+}$ and $K(f, x)$ is $\rho$-continuous on $\widehat {X^+}$, we have that 
$$\mu_{|F_+}(f) =  \int K(f, x)d\nu(x), \quad \forall f\in C_c(X^{+}). $$ 
Lastly, since  $supp(\nu_{m_k}) \subseteq F_{m_{k}}\subseteq F$ then $supp(\nu) \subseteq\ \overline{F}$ (the closure in $\widehat{X^+})$.
\end{proof}
\begin{lem}
\label{lemma:rep_support_closure}
Let $(X^{+},T)$ be a transitive locally compact one-sided TMS and let $\phi$ be a $\lambda$-transient potential function with summable variations. Let $\mu \in \conf(\lambda)$ and let $F\subseteq X^{+}$ be a Borel set. Then, there exists a finite measure $\nu$ with $supp(\nu) \subseteq\ \overline{ F}\cap \mathcal{M}(\lambda)$ s.t.
$$\mu_{|F_\infty}(f) = \int K(f,x)d\nu(x), \quad \forall f\in C_c(X^{+}).$$
\end{lem}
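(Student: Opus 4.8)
The plan is to recognize that $F_\infty$ is a \emph{strictly} $T$-invariant set and that the desired statement then follows by feeding the restricted measure into the previous lemma. Reducing to $\lambda=1$ as elsewhere, I would first observe that $x\in F_\infty$ if and only if $Tx\in F_\infty$, since ``$T^nx\in F$ for infinitely many $n$'' is unaffected by dropping the first coordinate; hence $T^{-1}F_\infty=F_\infty$ and $1_{F_\infty}\circ T=1_{F_\infty}$. Repeating the computation in the proof of Lemma \ref{lemma:erogidicty_of_extremal_points} verbatim (using the identity $L_\phi(g\cdot(h\circ T))=h\cdot L_\phi g$ together with $L_\phi^*\mu=\mu$), one gets $L_\phi^*\mu_{|F_\infty}=\mu_{|F_\infty}$. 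Since $\mu_{|F_\infty}(K)\le\mu(K)<\infty$ for every compact $K$, the measure $\mu_{|F_\infty}$ is Radon, so $\mu_{|F_\infty}\in\conf(1)$.

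Next I would apply Lemma \ref{lemma:restricted_integral} not to $\mu$ but to the conformal measure $\mu_{|F_\infty}$ and to the \emph{same} Borel set $F$. Because $F_\infty\subseteq F_+$, we have $(\mu_{|F_\infty})_{|F_+}=\mu_{|F_\infty}$, so Lemma \ref{lemma:restricted_integral} produces a finite measure $\nu$ with $supp(\nu)\subseteq\overline{F}$ and
\[
\mu_{|F_\infty}(f)=\int K(f,x\,|\,1)\,d\nu(x),\qquad \forall f\in C_c(X^+).
\]
This already gives the correct representation with the correct localization of the support inside $\overline{F}$; what remains is to push the support off the interior $X^+$ and onto $\mathcal{M}(\lambda)$.

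For that last step I would reuse the ``conformality defect'' argument from the end of the proof of Theorem \ref{thm:boundary_rep_theorem} applied to this particular $\nu$. Fix $a\in S$; by assumption \ref{assumption:locally_compact} the state $a$ has finitely many successors, so both $1_{[a]}$ and $L_\phi 1_{[a]}$ lie in $C_c(X^+)$ and are admissible test functions in the display above. Since $\mu_{|F_\infty}$ is conformal, $\mu_{|F_\infty}(1_{[a]})=\mu_{|F_\infty}(L_\phi 1_{[a]})$, whence
\[
0=\int_{\widehat{X^+}}\bigl(K(1_{[a]},x)-K(L_\phi 1_{[a]},x)\bigr)\,d\nu(x).
\]
On $\mathcal{M}(\lambda)$ the integrand vanishes because $K(\cdot,\omega\,|\,1)$ is conformal (Proposition \ref{claim:compactification_prop}\ref{claim:compactification_prop_6}), while on $X^+$ it equals $1_{[a]}(x)/G(1_{[o]},x)\ge 0$. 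As $G(1_{[o]},\cdot)$ is positive, continuous, and hence bounded on the compact set $[a]$, we conclude $\nu([a])=0$, and summing over $a\in S$ gives $\nu(X^+)=\sum_a\nu([a])=0$. Therefore $supp(\nu)\subseteq\mathcal{M}(\lambda)$, and combined with $supp(\nu)\subseteq\overline{F}$ this yields $supp(\nu)\subseteq\overline{F}\cap\mathcal{M}(\lambda)$, as claimed.

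The main obstacle is the interplay in the middle step: one must notice that restricting $\mu$ to the invariant set $F_\infty$ yields a genuinely conformal (not merely excessive) measure, so that the Green-potential part disappears and only the boundary part of the representing measure can survive. The previous lemma supplies the $\overline{F}$-localization almost for free, but it only guarantees $supp(\nu)\subseteq\overline{F}$, which a priori includes interior points; it is precisely the vanishing of the conformality defect of $\mu_{|F_\infty}$ that forces the interior mass of $\nu$ to be zero and drives the support onto $\mathcal{M}(\lambda)$.
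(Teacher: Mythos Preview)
Your argument is correct, and it takes a genuinely different route from the paper's.

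The paper applies Lemma \ref{lemma:restricted_integral} to $\mu$ and $F$ to obtain a representing measure $\nu$ with $supp(\nu)\subseteq\overline{F}$ and $\mu_{|F_+}(\cdot)=\int K(\cdot,x)\,d\nu(x)$, and then iterates $L_\phi^*$: since $(L_\phi^*)^n\mu_{|F_+}=\mu_{|T^{-n}F_+}\to\mu_{|F_\infty}$ by dominated convergence, while in the integral one has $K(L_\phi^n f,x)\searrow 0$ for $x\in X^+$ and $K(L_\phi^n f,\omega)=K(f,\omega)$ for $\omega\in\mathcal{M}$, the limit automatically restricts $\nu$ to $\overline{F}\cap\mathcal{M}$. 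In contrast, you first note that $F_\infty$ is fully $T$-invariant, so $\mu_{|F_\infty}$ is itself conformal; you then apply Lemma \ref{lemma:restricted_integral} directly to $\mu_{|F_\infty}$ (using $F_\infty\subseteq F_+$) and finish by recycling the conformality-defect computation from the end of Theorem \ref{thm:boundary_rep_theorem} to kill $\nu(X^+)$. Your approach is a bit more modular, reusing two existing arguments as black boxes, while the paper's limiting argument is more self-contained and gives a cleaner dynamical explanation of why only the boundary part survives. Either way the key insight is the same: because $\mu_{|F_\infty}$ is conformal rather than merely excessive, no ``potential part'' can appear in its representation.
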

\begin{proof}
Assume w.l.o.g. that $\lambda=1$. Recall that $T^{-n}F_+$ is a non-increasing sequence of sets and that $\bigcap_n T^{-n}F_+ = F_\infty$. Thus, by the dominated convergence theorem, for every $f\in C_c^+(X^+)$
$$((L_\phi^*)^n\mu_{|F_+})(f)  = \mu_{|T^{-n}F_+}(f) = \mu(f \cdot 1_{_{T^{-n}F_{+}}})\rightarrow \mu(f \cdot 1_{F_\infty}) = \mu_{|F_\infty}(f). $$
Observe that for every $x\in X^{+}$ and
 every $f\in C_c^+(X^+)$$$K(L_{\phi}^nf,x)=\frac{\sum_{k\geq n}L_\phi^k f(x)}{G(1_{[o]},x)}\searrow0.$$ Moreover, for every $\omega\in \mathcal{M}$, $K(L_{\phi}^nf,\omega)=K(f,\omega)$ (see Proposition \ref{claim:compactification_prop}). By  Lemma \ref{lemma:restricted_integral} and the bounded convergence theorem, we obtain that 
\begin{align*}\mu_{|F_\infty}(f) =& \lim_{n\rightarrow\infty}((L_\phi^*)^n(\mu_{|F_+}))(f) \\
=&\lim_{n\rightarrow\infty} \int_{\overline{F}}K(L_{\phi}^nf,x)d\nu(x)\\
=&  \int_{\overline{F} \cap \mathcal{M}} K(f,\omega)d\nu(\omega).
\end{align*}
\end{proof}

\begin{proof}[Proof of Theorem \ref{thm:conv_to_boundary}]
Assume w.l.o.g. that $\lambda=1$. Assume first that $\mu = \mu_\omega $ for some $\omega \in \mathcal{M}_m$. 
For $\epsilon>0$ let 
$$F_\epsilon = \{x\in X^{+}:\rho(x, \omega)  \geq \epsilon \}$$
and let 
$$A_\epsilon = (F_\epsilon)_\infty = \{x\in X^{+} : T^nx \in F_\epsilon \text{ for infinitely many } n \} .$$
Clearly $T^n x\rightarrow\omega$ iff $x\not\in A_\epsilon$ for every $\epsilon>0$. Thus it suffices to show that $\mu_\omega(A_\epsilon)=0$, for every $\epsilon>0$. 

Since $A_\epsilon$ is a $T$-invariant set and $\mu_\omega$ is an extremal measure, we have by Lemma \ref{lemma:erogidicty_of_extremal_points} that either $\mu_\omega(A_\epsilon)=0$ or $\mu_\omega(A_\epsilon^c)=0$ and  we only have to exclude the second case. Assume that $\mu_\omega(A_\epsilon)\neq 0$. Then $\mu_\omega = \mu_{\omega | A_\epsilon}$.  According to Lemma \ref{lemma:rep_support_closure} there exists a measure $\nu$ with $supp(\nu) \subseteq \overline{F_\epsilon} \cap \mathcal{M}$ s.t. 
$$\mu_\omega (\cdot)= \mu_{\omega | A_\epsilon}(\cdot) = \int K(\cdot,\omega') d\nu(\omega'). $$
Since $\mu_\omega$ is extremal, by Lemma \ref{lemma:extremal_points} we must have that $\nu \propto\ \delta_\omega$. This implies that $\omega \in \overline{F_\epsilon}$, which is clearly a contradiction. Hence $\mu_\omega(A_\epsilon)=0$.

Next, consider some arbitrary $\mu \in \conf$. Let 
$$A=\{x\in X^{+} : T^nx \text{ has no $\rho$-limit}\}.  $$
Let $\nu$ be the measure from Theorem \ref{thm:minimal_kernel_rep} s.t.
$$\mu = \int_{\mathcal{M}_m} \mu_\omega d\nu_{}(\omega). $$
Since $\mu_\omega(A)=0$ for every $\omega \in \mathcal{M}_m$, $\mu(A)=0$ as well. In particular,  for $\mu$-a.e. $x\in X^+$, $T^nx$ converges. By Proposition \ref{prop:dissipative_on_compacts}, $T^nx$ must converge to a boundary point. For all $\omega \in \mathcal{M}_m$, $$\mu_\omega (\left\{ x\in X^+ : \lim T^nx \in \mathcal{M}\setminus \mathcal{M}_m \right\})=0 $$
and therefore for $\mu$-a.e. $x\in X^+$,   $T^nx$  converges to a point in $ \mathcal{M}_m$. 

As for Eq. (\ref{eq:nu_prop}), since for every $\omega\in \mathcal{M}_m$, $$1 = \mu_\omega([o]) = \mu_
\omega([o] \cap \{x\in X^+: T^nx\rightarrow\omega\}) $$
we have that
\begin{align*}
\nu(E) = & \int_E 1d\nu(\omega)\\
=&\ \int_E  \mu_
\omega\left([o] \cap \{x\in X^+: T^nx\rightarrow\omega\}\right)d\nu(\omega)\\
=& \int_{\mathcal{M}_m}  \mu_
\omega\left([o] \cap \{x\in X^+:\exists \omega\in E \text{ s.t. } T^nx\rightarrow\omega\}\right)d\nu(\omega)\\
=& \mu\left([o] \cap \{x\in X^+:\exists \omega\in E \text{ s.t. } T^nx\rightarrow\omega\}\right).
\end{align*}
\end{proof}  

\section{The reversed Martin boundary and positive eigenfunctions}
\label{section:harmonic_functions}
So far we have focused on positive $\lambda$-eigenmeasures. We now turn our attention to the positive $\lambda$-eigenfunctions, specifically to positive eigenfunctions with  uniformly continuous logarithm:
\begin{defn}
 Let 
$$\harm(\lambda)=\left\{f\in C(X^{+}) : f>0, L_\phi f=\lambda f \text{ and } \log f\text{ is uniformly continuous}\right\}.$$
\end{defn}
The uniform regularity condition in the definition of $\harm$ appears naturally when trying to represent  eigenfunctions in forms of Martin kernels. 

One possible approach to study the positive eigenfunctions is  via a direct construction of a suitable Martin boundary, as in the study of the eigenmeasures.
However, this approach tends to be technical, leads to redundant  proofs and does not establish any connection between the eigenfunctions and the eigenmeasures. Thus we take a different approach; studying the eigenmeasures on the negative one-sided TMS. In particular, we establish a $1-1$ correspondence between eigenfunctions on the positive one-sided TMS and eigenmeasures on the negative one-sided TMS.  For a similar duality  in the probabilistic settings,  see \cite{kemeny_2012} and also \cite{revuz_2008}.

We start with definitions.
 \begin{defn}Let
$$ X = \{z\in S^{\mathbb{Z}}:\mathbb{A}_{(z)_{i},(z)_{i+1}}=1, \forall i\in\mathbb{Z}\}$$
and let
$$ X^- = \{y\in S^{\mathbb{-N}\cup\{0\}}:\mathbb{A}_{(y)_{i},(y)_{i+1}}=1, \forall i<0 \}.$$
To avoid confusions, in this section points of $X^+$ will be denoted by $x$, points of $X^-$ will be denoted by $y$ and points of $X$ by $z$. 
The\textit{ (two-sided) left shift} $T:X\rightarrow X$ is the transformation $(Tz)_i = (z)_{i+1}$, the \textit{(two-sided) right shift} $T^{-1}:X\rightarrow X$ is the transformation $(T^{-1}z)_i = (z)_{i-1}$ and the \textit{(one-sided) right shift} $T^{-1}:X^-\rightarrow X^{-}$ is the transformation $(T^{-1}y)_i = (y)_{i-1}$.  Then, $(X^-, T^{-1})$ is the \textit{negative (one-sided) topological Markov shift} and $(X,T)$ is the \textit{two-sided topological Markov shift}. 

For $z\in X^+$, let $z^+$ be the projection of $z$ to $X^+$ 
$$z^+ = ((z)_{0}, (z)_{1},(z)_2,\dots) $$
and let $z^-$ the projection of $z$ to $X^-$
$$z^- = (\dots, (z)_{-2}, (z)_{-1},(z)_0). $$ The notion of cylinder sets, the metric $d$ and the variation of a function (Eq. (\ref{eq:def_d})-(\ref{eq:def_var})) can be naturally extended to $X$ and $X^-$ as well, by allowing negative indices. 

For a potential function $\phi^-:X^-\rightarrow\mathbb{R}$, the \textit{Ruelle operator} $L_{\phi^-}$ is
$$ (L_{\phi^-} f) (y) = \sum_{y':T^{-1}y'=y}e^{\phi^-(y')}f(y'), \quad f\in C(X^-), y\in X^-.$$
In particular, 
$$ (L_{\phi^-}^n f )(y) = \sum_{y':T^{-n}y'=y}e^{\phi_{n}^-(y')}f(y'), \quad f\in C(X^-), y\in X^-$$
where $\phi_n^-(y) = \sum_{i=0}^{n-1}\phi^-(T^{-i}y)$. 
\end{defn}
To simplify the notations we write  $T,T^{-1}$ and $d$ without stating which type of a TMS we acting on. The intention should be clear from the context. When handling a two-sided point $z\in X$, the zero entry may be marked with a dot over it, e.g. $z = (\dots, (z)_{-1}, \dot{(z)}_0, (z)_1,\dots)$. 

 Our approach to the problem is via the Martin boundary of the negative one-sided shift $X^-$, which we denote it by $\cev{ \mathcal{M}}$. In order for $\cev{ \mathcal{M}}$ to exist, we need $X^-$ to be  locally compact as well. For this reason we add assumption \ref{assumption:locally_compact_two_sides}. This, together with assumption \ref{assumption:locally_compact}, implies that  $\sum_{b}\mathbb{A}_{a,b}+\sum_b \mathbb{A}_{b,a}<\infty$ for all $a\in S$ and that $X$ is locally compact.  
Before stating and proving the main results of this section, we first handle the question which potential function should we equip $X^-$ in order to construct $\cev{ \mathcal{M}}$. The following propositions provides us with a natural one. \begin{defn}
Two potentials $\phi^+ :X^+\rightarrow\mathbb{R}$, $\phi^-:X^-\rightarrow\mathbb{R}$ are \textit{cohomologous via a transfer function} $\psi:X\rightarrow\mathbb{R}$ if
\begin{equation}
\label{eq:cohomo_eq}
\phi^+(z^+) -\phi^-(z^-) = \psi(z) - \psi(Tz), \quad \forall z\in X. 
\end{equation}
\end{defn}
\begin{prop}
Let $\phi^+:X^+\rightarrow\mathbb{R}$ be a potential function with summable variations. Then, there exists $\phi^-:X^-\rightarrow\mathbb{R}$ with summable variations and a uniformly continuous function  $\psi:X\rightarrow\mathbb{R}$ s.t. $\phi^+$ and $\phi^-$ are cohomologous via $\psi$. 
\end{prop}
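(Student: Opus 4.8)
The plan is to lift $\phi^+$ to the two-sided shift and recognize the statement as the time-reversal of the classical Sinai--Bowen reduction, which asserts that a function with summable variations on a two-sided TMS is cohomologous, through a uniformly continuous (in fact summable-variation) transfer function, to one depending only on the coordinates of a single side. Concretely, set $\Phi := \phi^+\circ \pi^+$, where $\pi^+:X\to X^+$ is the projection $z\mapsto z^+$; then $\Phi$ is a function on $X$ depending only on the non-negative coordinates, with $Var_m(\Phi)=Var_m(\phi^+)$, and the goal becomes to produce a cohomologous $P:X\to\mathbb{R}$ depending only on the non-positive coordinates (so that $P=\phi^-\circ\pi^-$ for some $\phi^-$ on $X^-$), together with a uniformly continuous $\psi$ realizing $\Phi-P=\psi-\psi\circ T$.

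For the construction I would reuse the reference continuations already fixed in the paper: for $a\in S$ recall $x_a\in T[a]$, so that $a$ may be followed by $x_a$. Given $z\in X$, let $\bar z\in X$ be the point that keeps the coordinates $(z)_i$ for $i\le 0$ and replaces the strictly positive ones by the reference continuation of $(z)_0$, i.e. $\bar z=(\dots,(z)_{-1},(z)_0,(x_{(z)_0})_0,(x_{(z)_0})_1,\dots)$; note $\bar z$ depends only on $z^-$ and is admissible by the choice of $x_{(z)_0}$. I then define the telescoping sum $u(z):=\sum_{n\ge 0}\big(\Phi(T^{-n}z)-\Phi(T^{-n}\bar z)\big)$, and set $\psi(z):=-u(T^{-1}z)$ and $\phi^-\circ\pi^-:=\Phi-\psi+\psi\circ T$. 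Convergence is the first point to check: since $z$ and $\bar z$ agree on all coordinates $\le 0$, the points $T^{-n}z$ and $T^{-n}\bar z$ agree on the coordinates $\le n$, hence their futures agree on $\{0,\dots,n\}$ and $|\Phi(T^{-n}z)-\Phi(T^{-n}\bar z)|\le Var_{n+2}(\phi^+)$. Summability of the variations gives absolute convergence, and controlling the tails $\sum_{n\ge N}Var_{n+2}(\phi^+)$ yields a modulus of continuity for $u$, hence the uniform continuity of $\psi$.

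The remaining work, which I expect to be the main obstacle, is to verify that $P=\Phi-\psi+\psi\circ T$ genuinely depends only on the non-positive coordinates and has summable variations. Expanding $u(z)-u(T^{-1}z)$ and telescoping, the diagonal contribution is $\Phi(\bar z)=\phi^+(\xi_{(z)_0})$, which already depends only on $(z)_0$, while the surviving sum consists of differences of the form $\Phi(T^{-n}\overline{T^{-1}z})-\Phi(T^{-n-1}\bar z)$; the key point is that in each such difference the reference continuations erase the dependence on the strictly positive coordinates of $z$, exactly mirroring the way the error term in the forward Sinai--Bowen reduction turns out to depend only on one side. Carrying out this cancellation shows $P=\phi^-\circ\pi^-$ for a well-defined $\phi^-:X^-\to\mathbb{R}$, and the same variation estimate used for convergence gives a bound of the form $Var_m(\phi^-)\le C\sum_{k\ge m}Var_k(\phi^+)$, so $\phi^-$ again has summable variations. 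All the delicate bookkeeping lives in this cancellation and in the uniform estimate for $\psi$; the identity $\Phi-P=\psi-\psi\circ T$ itself is then just the standard telescoping.
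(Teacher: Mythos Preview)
Your proposal is correct and is precisely the classical Sinai--Bowen reduction, carried out in the time-reversed direction; this is exactly what the paper invokes, since its own ``proof'' consists solely of the citation ``See \cite{bowen_1975,coelho_1998,daon_2013}.'' In other words, you have reconstructed the argument that lives in those references (Daon's paper in particular treats the countable-state, summable-variations setting), so there is no genuine difference in approach---you simply spelled out what the paper outsourced. A couple of minor bookkeeping points: your index shift in the ``surviving sum'' should read $\Phi(T^{-n}\bar z)-\Phi(T^{-(n-1)}\overline{T^{-1}z})$ rather than what you wrote, and the notation $\xi_{(z)_0}$ is undefined (you mean $(\bar z)^+=((z)_0,x_{(z)_0})$); neither affects correctness.
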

\begin{proof}
See  \cite{bowen_1975,coelho_1998,daon_2013}.
\end{proof}
\begin{prop}
\label{claim:left_right_transience}
Let $(X,T)$ be a transitive locally compact two-sided TMS and let $\phi^+ :X^+\rightarrow\mathbb{R}$, $\phi^-:X^-\rightarrow\mathbb{R}$  be  two potential functions with summable variations and which are cohomologous via a uniformly continuous transfer function $\psi:X\rightarrow\mathbb{R}$. Then,  $\phi^+$ is $\lambda$-transient iff $\phi^-$ is $\lambda$-transient. 
\end{prop}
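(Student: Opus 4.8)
The plan is to reduce the equivalence to a term-by-term comparison of the two Green's functions, and to show that the forward Birkhoff sums of $\phi^+$ along a path in $X^+$ differ from the backward Birkhoff sums of $\phi^-$ along the reversed path in $X^-$ by an amount that is bounded independently of the length of the path. Writing $G^+$ and $G^-$ for the Green's functions built from $L_{\phi^+}$ and $L_{\phi^-}$, I would use that $\lambda$-transience is a ``some or every'' property (via the Harnack-type comparison of Lemma \ref{lemma:kernel_boundness}, applied separately on $X^+$ and $X^-$), so that it suffices to compare $G^+(1_{[a]},x_a|\lambda)$ with $G^-(1_{[a]},y_a|\lambda)$ for a single fixed state $a\in S$ and fixed points $x_a\in[a]\subseteq X^+$, $y_a\in[a]\subseteq X^-$. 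Since $(x_a)_0=(y_a)_0=a$, the admissible paths contributing to $(L_{\phi^+}^n 1_{[a]})(x_a)$ and to $(L_{\phi^-}^n 1_{[a]})(y_a)$ are both indexed by the admissible loop words $(a,v_1,\dots,v_{n-1},a)$, which gives a length-preserving bijection between the summands on the two sides.

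For the core estimate I would attach to each loop word a two-sided point $z\in X$ whose left tail is that of $y_a$, whose central block is the loop, and whose right tail is that of $x_a$, so that $z^+$ is precisely the $X^+$-path and $(T^nz)^-$ is precisely the $X^-$-path carrying that loop. Summing the cohomology identity $\phi^+((T^iz)^+)-\phi^-((T^iz)^-)=\psi(T^iz)-\psi(T^{i+1}z)$ over $0\le i\le n-1$ telescopes the transfer term to $\psi(z)-\psi(T^nz)$ and yields $\phi^+_n(z^+)$ on the left. The delicate point is that the resulting orbit sum of $\phi^-$ runs \emph{forward}, whereas $\phi^-_n$ is a \emph{backward} sum; reconciling the two amounts to reindexing, which leaves precisely two endpoint terms. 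Concretely I expect
$$\phi^+_n(z^+)-\phi^-_n\big((T^nz)^-\big)=\big(\psi(z)-\psi(T^nz)\big)+\big(\phi^-(z^-)-\phi^-\big((T^nz)^-\big)\big).$$

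The key observation is that, by construction, both $z$ and $T^nz$ lie in the two-sided cylinder $[a]\subseteq X$, and both $z^-$ and $(T^nz)^-$ lie in the cylinder $[a]\subseteq X^-$. Under assumptions \ref{assumption:locally_compact} and \ref{assumption:locally_compact_two_sides} these cylinders are compact, so the continuous functions $\psi$ and $\phi^-$ are bounded on them; hence all four terms on the right are bounded by a single constant $C_a$ depending only on $a$, uniformly in $n$ and in the loop. Multiplying the exponentiated estimate by $\lambda^{-n}$ (which matches on both sides, since the bijection preserves $n$, so no reduction to $\lambda=1$ is needed) shows that each summand of one Green's function lies within a factor $e^{\pm C_a}$ of the corresponding summand of the other. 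Summing over all loops and all $n$ gives $e^{-C_a}G^-(1_{[a]},y_a|\lambda)\le G^+(1_{[a]},x_a|\lambda)\le e^{C_a}G^-(1_{[a]},y_a|\lambda)$, so one Green's function is finite iff the other is, and the ``some or every'' characterization then yields that $\phi^+$ is $\lambda$-transient iff $\phi^-$ is.

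I expect the main obstacle to be exactly the forward/backward mismatch between the Birkhoff sum $\phi^+_n$ (read from the start of the loop) and $\phi^-_n$ (read backward from its end): one must check carefully that the reindexing produces only endpoint boundary terms, and that these together with the telescoped transfer term are all evaluated on the compact cylinder $[a]$. The fact that the cohomology proceeds through a merely uniformly continuous (and a priori unbounded) transfer function $\psi$ is precisely what forces the comparison to be made along loops rather than arbitrary paths, so that $\psi$ and $\phi^-$ are only ever evaluated on the compact set $[a]$ and the total boundary contribution stays bounded independently of the loop length.
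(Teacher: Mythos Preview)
Your approach is correct and is essentially the paper's own argument: fix a state $a$, compare the two Green's series term-by-term over loops at $a$, and use the telescoping of the cohomology identity to bound the difference of Birkhoff sums by endpoint values of $\psi$ (and, in your indexing, of $\phi^-$) on the compact cylinder $[a]$. The only cosmetic differences are that the paper takes $x\in T[a]$ rather than $x_a\in[a]$ (so it must restrict the $L_{\phi^+}^n$-sum to loops and gets only a one-sided inequality, finishing by symmetry), and the paper's shift of the summation index by one makes the two Birkhoff sums align exactly, eliminating your extra boundary term $\phi^-(z^-)-\phi^-((T^nz)^-)$; your choice of base points in $[a]$ gives the two-sided bound in one pass, which is arguably cleaner.
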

\begin{proof}
See appendix.
\end{proof}
In what follows, we assume that  $\phi^+:X^+\rightarrow\mathbb{R}$ and $\phi^-:X^-\rightarrow\mathbb{R}$ are  $\lambda$-transient  potential functions with summable variations which are cohomologous via a uniformly continuous  transfer function   $\psi:X\rightarrow\mathbb{R}$. 

Denote by $\cev {K}(\cdot, \cdot|\lambda)$ the Martin kernel w.r.t. $(X^- , \phi^-)$, by $\cev{ \mathcal{M}}(\lambda)$ the corresponding Martin boundary and by $\cev{ \mathcal{M}}_m(\lambda)$ the minimal boundary. We show that the eigenfunctions on the positive one-sided TMS are in fact equivalent, via a simple reduction, to the conformal measures of the negative one-sided TMS.   

\begin{defn}For a point $x\in X^+$, let $\psi_{x}:T^{-1}[(x)_0]\rightarrow\mathbb{R}$,  $$\psi_{x}(y ):= \psi(\dots, (y)_{-1}, (y)_{0}, \dot{(x)}_0, (x)_1, (x)_2, \dots)$$
and let $\chi_x:X^-\rightarrow\mathbb{R}$, 
 $$\chi_x := \exp\left(-\psi(\dots, (y)_{-1}, (y)_{0}, \dot{(x)}_0, (x)_1, (x)_2, \dots)\right)\cdot \mathbb{A}_{(y)_{0},(x)_0}.$$
 Clearly $\chi_x \in C_{c}^{+}(X^-)$, for every $x\in X^+$. \end{defn}
\begin{thm}
\label{thm:harm_conf_correspondence}
Let $(X^{},T)$ be a  transitive locally compact two-sided TMS and let  $\phi:X^+\rightarrow\mathbb{R}$ be a potential function with summable variations. Then, there is a 1-1 linear correspondence between the $\lambda$-conformal measures on $(X^-, T^-, \phi^-)$ and the eigenfunctions in $\mathcal{H}(\lambda)$ via the mapping $\pi$,  
$$(\pi(\mu))(x) = \mu(\chi_x). $$
\end{thm}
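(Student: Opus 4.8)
The plan is to establish that $\pi$ is a well-defined linear bijection from $\conf(\lambda)$ on $(X^-,\phi^-)$ onto $\harm(\lambda)$ by verifying four things: (i) for each $\mu$, the function $\pi(\mu)$ is a positive continuous $\lambda$-eigenfunction of $L_\phi$ with uniformly continuous logarithm; (ii) $\pi$ is linear and injective; (iii) $\pi$ is surjective. The engine behind all of these is the cohomology relation \eqref{eq:cohomo_eq}, which converts the forward Ruelle operator $L_\phi$ on $X^+$ into the reversed operator $L_{\phi^-}$ on $X^-$ after transport by the transfer function $\psi$.

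First I would show $\pi(\mu)\in\harm(\lambda)$. The key computation is the eigenfunction identity. Writing $\pi(\mu)(x)=\mu(\chi_x)$ and expanding $L_\phi(\pi(\mu))(x)=\sum_{y:Ty=x}e^{\phi(y)}\mu(\chi_y)$, I would use \eqref{eq:cohomo_eq} in the form $\phi^+(z^+)=\phi^-(z^-)+\psi(z)-\psi(Tz)$ to rewrite each summand. The point is that prepending a symbol to $x$ on the $X^+$ side corresponds, after the $\psi$-twist built into $\chi_x$, to applying one step of $L_{\phi^-}$ on the $X^-$ side; summing over the preimages $y$ with $Ty=x$ matches summing over the one extra coordinate that $L_{\phi^-}$ integrates. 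Since $L_{\phi^-}^*\mu=\lambda\mu$, this should collapse to $\lambda\,\mu(\chi_x)=\lambda\,\pi(\mu)(x)$. Positivity is immediate from $\chi_x\in C_c^+(X^-)\setminus\{0\}$ together with Lemma~\ref{lemma:positive_on_cylinders} (a nonzero conformal $\mu$ charges every cylinder), and continuity plus uniform continuity of $\log\pi(\mu)$ follows from uniform continuity of $\psi$: if $x,x'$ agree on a long initial block then $\chi_x$ and $\chi_{x'}$ are uniformly close as elements of $C_c^+(X^-)$, forcing $\pi(\mu)(x)=e^{\pm o(1)}\pi(\mu)(x')$.

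Linearity of $\pi$ is clear since $\mu\mapsto\mu(\chi_x)$ is linear in $\mu$ for each fixed $x$. For injectivity, I would argue that the family $\{\chi_x:x\in X^+\}$ is rich enough to separate conformal measures: two $\lambda$-conformal measures agreeing on all $\chi_x$ must agree on a generating family of functions. Concretely, by varying $x$ over a cylinder one recovers, via the $\psi$-twist, the values of $\mu$ on all cylinders $[(y)_{-m},\dots,(y)_0]$ of $X^-$, and a Radon measure on the locally compact space $X^-$ is determined by its values on cylinders; hence $\pi(\mu)\equiv 0$ forces $\mu=0$.

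The main obstacle, and the heart of the proof, is surjectivity: given $h\in\harm(\lambda)$ I must produce a $\lambda$-conformal $\mu$ on $X^-$ with $\mu(\chi_x)=h(x)$. The natural route is to reverse the correspondence, using $h$ to define a candidate functional on the cylinder-indicator functions of $X^-$ (undoing the $\psi$-twist that defines $\chi_x$) and checking it is consistent, positive, and extends to a Radon measure by Riesz representation, with $\lambda$-conformality coming from the eigenfunction equation $L_\phi h=\lambda h$ read backwards through \eqref{eq:cohomo_eq}. The uniform continuity of $\log h$ is exactly what guarantees the resulting set function is well-defined and positive independent of the particular forward continuation of $y$ used to evaluate it, which is why $\harm$ is defined with that regularity built in. I expect the bookkeeping of indices in the cohomology relation, and verifying the consistency (Kolmogorov-type compatibility) of the candidate measure across cylinders of different lengths, to be the most delicate part.
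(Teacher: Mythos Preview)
Your proposal is correct and follows essentially the same cohomology-based approach as the paper: the eigenfunction identity, uniform continuity of $\log h$, and injectivity are handled just as you describe. The one organizational difference worth noting is in surjectivity: rather than building the measure directly from a general $h$, the paper first treats the normalized case $L_\phi 1=1$ (so $h\equiv 1$), where the candidate measures $\mu^x([a_n,\dots,a_1]):=e^{\phi_n(a_n,\dots,a_1 x)}$ are probability measures and Carath\'eodory extension is clean, and then reduces arbitrary $h\in\harm(\lambda)$ to this case by replacing $\phi$ with $\phi^h=\phi+\log h-\log h\circ T$ (which has $L_{\phi^h}1=1$ and transfer function $\psi^h=\psi+\log h$, still uniformly continuous). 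This trick absorbs $h$ into the potential and spares you from tracking $h$ through the consistency and conformality checks, but the underlying argument is the same as what you outline.
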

To prove the theorem, we will use to following elementary lemma. 
\begin{lem}
\label{lemma:cohomo_lemma}
Let $(X,T)$ be a transitive locally compact two-sided TMS and let $\phi^+ :X^+\rightarrow\mathbb{R}$, $\phi^-:X^-\rightarrow\mathbb{R}$  be  two potential functions with summable variations and which are cohomologous via a uniformly continuous transfer function $\psi:X\rightarrow\mathbb{R}$. Let $x\in X^+, y\in X^-$ and $a_1,\dots, a_n\in S$ s.t. $(y,a_n,\dots, a_1, \dot x)\in X$. Then, 
$$  e^{\phi^{+}_n(a_n\dots, a_1 x) - \psi(y\dot a_n,\dots, a_1 x)}=e^{\phi_n^-(ya_n,\dots, a_1) - \psi(y a_n,\dots, a_1 \dot x)}. $$
\end{lem}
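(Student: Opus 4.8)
The plan is to reduce the claim to a telescoping sum of the cohomology relation $\phi^+(z^+)-\phi^-(z^-)=\psi(z)-\psi(Tz)$ along a finite orbit segment in the two-sided shift $X$. Since both sides of the asserted identity are exponentials, it suffices to prove equality of the exponents; and because the two $\psi$-terms in the statement differ only by where the origin dot is placed, they should arise naturally as the two endpoints of a telescope, while the two Birkhoff sums $\phi_n^+$ and $\phi_n^-$ should be matched to the \emph{same} orbit segment.

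Concretely, I would let $w\in X$ be the two-sided point $w=(y\,\dot a_n\,a_{n-1}\cdots a_1\,x)$, with $a_n$ at coordinate $0$; this point is admissible exactly by the hypothesis $(y,a_n,\dots,a_1,\dot x)\in X$. Then $T^n w=(y\,a_n\cdots a_1\,\dot x)$, so $\psi(w)$ and $\psi(T^n w)$ are precisely the two transfer terms $\psi(y\dot a_n\cdots a_1 x)$ and $\psi(y a_n\cdots a_1\dot x)$ appearing in the lemma. Next I would align the two Birkhoff sums with the orbit $w,Tw,\dots,T^{n-1}w$: on the positive side one has $w^+=(a_n\cdots a_1 x)$, and since the positive projection commutes with the left shift, $\phi_n^+(a_n\cdots a_1 x)=\sum_{i=0}^{n-1}\phi^+\!\big((T^i w)^+\big)$; on the negative side one has $(T^{n-1}w)^-=(y a_n\cdots a_1)$, from which $\phi_n^-(y a_n\cdots a_1)=\sum_{i=0}^{n-1}\phi^-\!\big((T^i w)^-\big)$.

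Assembling these, the exponent difference becomes $\sum_{i=0}^{n-1}\big[\phi^+((T^i w)^+)-\phi^-((T^i w)^-)\big]$, and applying the cohomology equation to each $z=T^i w$ converts this into the telescoping sum $\sum_{i=0}^{n-1}\big[\psi(T^i w)-\psi(T^{i+1}w)\big]=\psi(w)-\psi(T^n w)$. Rearranging gives $\phi_n^+(a_n\cdots a_1 x)-\psi(w)=\phi_n^-(y a_n\cdots a_1)-\psi(T^n w)$, which is exactly the desired identity after exponentiating.

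I expect the only genuine obstacle to be the index bookkeeping, and specifically the orientation reversal in the negative Birkhoff sum: because $T^{-1}$ runs in the opposite direction, one must verify $T^{-i}(T^{n-1}w)^-=(T^{n-1-i}w)^-$ and then reindex by $j=n-1-i$, so that the $\phi^-$-sum ranges over the same orbit points $T^j w$ as the $\phi^+$-sum. Once both sums are pinned to a common orbit segment, the cohomology relation telescopes immediately and the proof is complete.
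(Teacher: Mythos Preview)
Your proposal is correct and matches the paper's proof essentially line for line: the paper computes $\phi^+_n(a_n\cdots a_1 x)-\phi^-_n(ya_n\cdots a_1)=\sum_{i=1}^n\bigl(\phi^+(a_i\cdots a_1 x)-\phi^-(ya_n\cdots a_i)\bigr)$, applies the cohomology equation termwise, and telescopes to $\psi(y\dot a_n\cdots a_1 x)-\psi(ya_n\cdots a_1\dot x)$. Your formulation via the explicit two-sided point $w$ and the orbit $T^i w$ is just a cleaner packaging of the same bookkeeping, and your anticipated reindexing $j=n-1-i$ for the negative Birkhoff sum is exactly the alignment the paper uses implicitly.
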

\begin{proof}
By Equation (\ref{eq:cohomo_eq}),
\begin{align*}
&\phi^{+}_n(a_n\dots, a_1 x) - \phi_n^-(ya_n,\dots, a_1)\\ 
&=\sum_{i=1}^{n}\left( \phi^{+}(a_i,\dots, a_1x)- \phi^-(ya_n,\dots, a_i) \right)\\
&= \sum_{i=1}^n ( \psi(ya_{n},\dots, \dot a_i,\dots, a_1x) - (\psi\circ T)(ya_{n},\dots, \dot a_i,\dots, a_1x))\\
&= \psi(y\dot a_n,\dots, a_1 x) - \psi(ya_n,\dots, a_1\dot x).
\end{align*}
\end{proof}

\begin{proof}[Proof of Theorem \ref{thm:harm_conf_correspondence}]
Assume w.l.o.g. that $\lambda=1$.
 
\underline{$\pi(\mu)\in\harm$:} Let $h(x) = \mu(\chi_x)$.  
Since $\psi$ is uniformly continuous, for  $n\geq 2$ and  $x_1,x_2\in X^+$ with $d(x_1,x_2)\leq 2^{-n}$,
$$\chi_{x_1}(y)=e^{\pm o_n(1)} \chi_{x_2}(y), \quad \forall y\in X^-.$$In particular,$$h(x_1) = \mu(\chi_{x_1}) =e^{\pm o_n(1)}\mu(\chi_{x_2})=e^{\pm o_n(1)}h(x_2)$$
and $\log h$ is uniformly continuous.
Next, we write\begin{align*}
h(x) =& \mu(\chi_{x}) =(L^{*}_{\phi^-}(\mu))(\chi_{x}) \qquad(\because \mu \text{ is conformal})
\\ = &\sum_{b\in S}\int_{T^{-1}[b]}e^{\phi^-(yb)}\chi_{x}(yb)d\mu(y)\\\
=&  \sum_{b:x\in T[b]}\int_{T^{-1}[b]} e^{\phi^-(yb)-\psi(yb\dot x)}d\mu(y) \\
=& \sum_{b:x\in T[b]}\int_{T^{-1}[b]} e^{\phi(bx)-\psi(y\dot bx)}d\mu(y)\qquad(\because \text{ cohomology})\\ =& 
\sum_{b:x\in T[b]}\int_{X^{-}} e^{\phi(bx)}\chi_{bx}(y)d\mu(y)\\
=&\sum_{b:x\in T[b]} e^{\phi(bx)}h(bx)=L_{\phi}h(x).
\end{align*}
Hence $h(x)  =L_{\phi}h(x)$. 

\underline{$\pi$ is $1-1$:}
For a measure $\mu\in \conf(X^-)$, let $\mu_{}^x =\chi_x\mu$, i.e. $\mu^x(f) = \mu(\chi_x f)$ for all $f\in C_c(X^-)$.  Let $a_1,\dots, a_n\in S$ with $[a_n,\dots, a_1]\neq \varnothing$ and let $x\in T[a_1]$. 
Then, 
\begin{align*}
\mu^x[a_n,\dots, a_1] = & \int_{[a_n,\dots, a_1]} e^{-\psi^{}_x(y)}d\mu(y) \qquad \\
=& \int_{T^{-1}[a_n]}e^{\phi_n^-(ya_n,\dots, a_1) - \psi^{}(ya_n,\dots, a_1\dot x)}d\mu(y) \quad(\because \mu \text{ is conformal})\\
=& \int_{T^{-1}[a_n]}e^{\phi_n(a_n,\dots, a_1x) - \psi(y\dot a_n,\dots, a_1x)}d\mu(y) \quad(\because \text{ Lemma }\ref{lemma:cohomo_lemma})\\
=& e^{\phi^{}_n(a_n,\dots, a_1x)}\mu^{}(\chi_{a_n,\dots, a_1x}) .
\end{align*} 
Therefore, if $\pi(\mu_1)\equiv\ \pi(\mu_2)$, then $\mu_1^x=\mu_2^x$ for every $x$. This implies that
$$ \mu_1([a_n,\dots, a_1]) = \mu^x_1(e^{\psi^{}_x}1_{[a_n,\dots, a_1]}) =\mu^x_2(e^{\psi^{}_x}1_{[a_n,\dots, a_1]})=\mu_2([a_n,\dots, a_1]). $$

\underline{$\pi$ is onto:}
We first show that if   $L_\phi1=1$  then there is a conformal measure $\mu$ with $\pi(\mu)$=1. For this, we introduce a collection of finite measures $\{\mu^x\}_{x\in X^+} $ and construct a conformal measure $\mu$ with $\mu^x = \chi_x \mu$. Let 
$$ \mu^x ([a_n,\dots, a_1]) := L_\phi^n 1_{[a_n,\dots, a_1]}(x) = \begin{cases}e^{\phi_{n}(a_n,\dots, a_1x)} & x\in T[a_1] \\
0 & o.w. \\
\end{cases}.$$
Since $L_\phi1=1$, if $x\in T[a_1]$ then
\begin{align*}\sum_{b\in S }\mu^x ([ba_n,\dots, a_1]) =& \sum_b e^{\phi_{n+1}(ba_n,\dots, a_1x)}\\
 =&e^{\phi_{n}(a_n,\dots, a_1x)} \sum_b e^{\phi(ba_n,\dots, a_1x)} \\
 =& \mu^x[a_n,\dots, a_1]
\end{align*}
and $\mu^x$ can be extended to a probability measure on $X^-$ via Carath\'eodory''s extension theorem. 

Let $a\in S$. We show that if  $x_1,x_2\in T[a]$ then $e^{\psi_{x_1}}\mu^{x_1}=e^{\psi_{x_2}}\mu^{x_2}$ on $[a]$. Since every measurable set with finite measure can be approximated by a finite union of cylinders, it is suffices to consider cylinders of constant  length. 
Let $a_1,\dots, a_n$ with $a_1=a$ and $[a_n,\dots, a_1]\neq \varnothing$ and let $y_0\in T^{-1}[a_n]$. Then, 
\begin{align*}
& \int_{[a_n,\dots, a_1]} e^{\psi_{x_1}(y)}d\mu^{x_1}(y)\\
 &= e^{o_n(1)}e^{\psi_{x_1}(y_{0}a_n,\dots, a_1 )} \mu^{x_1}[a_n,\dots, a_1]\qquad(\because  \;\psi\text{ is uniformly continuous}) \\
&=e^{o_n(1)} e^{\psi(y_{0},a_n,\dots, a_1\dot x_{1})+\phi_n(a_n,\dots, a_1x_{1})}\\
&=e^{o_n(1)} e^{\psi(y_{0},\dot a_n,\dots, a_1 x_{1})+\phi^-_n(y_0, a_n\dots, a_1)}\qquad (\because \; \text{Lemma }\ref{lemma:cohomo_lemma})\\
&= e^{o_n(1)} e^{\psi(y_{0},\dot a_n,\dots, a_1 x_2)+\phi^-_n(y_0, a_n\dots, a_1)}\qquad(\because \;\psi\text{ is uniformly continuous}).
\end{align*}
Similarly, 
$$ \int_{[a_n,\dots, a_1]} e^{\psi_{x_2}(y)}d\mu^{x_2}(y) = e^{o_n(1)} e^{\psi(y_{0},\dot a_n,\dots, a_1 x_2)+\phi^-_n(y_0, a_n\dots, a_1)}. $$
Since $n$ can be arbitrarily large and $o_n(1)$ is uniform in $a_1,\dots, a_n$, $$e^{\psi_{x_1}}\mu^{x_1} = e^{\psi_{x_2}}\mu^{x_2}.$$ 

Let $\{b_i\}$ be an enumeration of $S$ and let 
\begin{equation}
\label{eq:onto_mu_construction}
\mu = \sum_{i}1_{[b_i]}e^{\psi_{x_{b_i}}}\mu^{x_{b_{i}}}
\end{equation} where $x_{b_i}\in T[b_i]$. Clearly $\mu$ is a positive Radon measure. We show that $\mu^x = \chi_x \mu$. Then, since $\mu^x$ are probability measures, 
$$\mu(\chi_x) = \mu^x(1) =1. $$
Fix $b\in S$, let $x\in [b]\subseteq X^{+}$ and let $f\in C_c(X^-)$.  Then, \begin{align*}
\mu(f\chi_x) = & \mu(fe^{-\psi_x}1_{T^{-1}[b]}) \\
=& \sum_{a\in S: \mathbb{A}_{a,b}=1} \mu(fe^{-\psi_x}1_{[a]}) \\
=& \sum_{a\in S:\mathbb{A}_{a,b}=1}\mu^{x_a}(fe^{\psi_{x_{a}}}e^{-\psi_x}1_{[a]})\\
=& \sum_{a\in S:\mathbb{A}_{a,b}=1}\mu^{x}(fe^{\psi_{x}}e^{-\psi_x}1_{[a]})\qquad (\because x,x_a\in T[a] )\\
=& \mu^x(f1_{T^{-1}[b]})=\mu^x(f).
\end{align*}

To show that $\mu$ is conformal, it is suffices to consider only cylinders. Let $[a_n,\dots, a_1]\neq \varnothing$, let $x\in T[a_1]$ and let $y_0\in T^{-1}[a_n] $. Then, 
\begin{align*}
(L_{\phi^-}^* \mu)[a_n,\dots, a_1] = & \int_{[a_n,\dots, a_2]} e^{\phi^-(ya_1)}d\mu(y) \\=& \int _{[a_n,\dots, a_2]}e^{\phi^-(ya_1)}e^{\psi_{x_{a_2}}(y)}d\mu^{x_{a_{2}}}(y)\qquad (\because \text{definition of }\mu )\\
=& \int _{[a_n,\dots, a_2]}e^{\phi^-(ya_1)}e^{\psi_{a_1x}(y)}d\mu^{{a_1}x}(y)\qquad(\because a_{1}x,x_{a_{2}}\in T[a_{2}] )\\
=& e^{\phi^-(y_0a_{n},\dots, a_1) +\psi(y_{0}a_{n},\dots, a_2 ,\dot a_1x)+\phi_{n-1}(a_n,\dots, a_1x)+o_{n}(1)}\\
=& e^{\phi_{n}(a_n,\dots, a_1x)+ \psi(y_{0}a_n,\dots, a_1\dot x)+o_n(1)}\qquad (\because \; \text{cohomology})\\
=&e^{o_n(1)} \int_{[a_n,\dots, a_1]} e^{ \psi_{x}(y)}d\mu^x(y)\\
=& e^{o_n(1)} \int_{[a_n,\dots, a_1]} e^{ \psi_{x_{a_{1}}}(y)}d\mu^{x_{a_1}}(y)\qquad (\because x,x_{a_{1}}\in T[a_{1}] )\\
=& e^{o_n(1)}\mu[a_n,\dots, a_1].
\end{align*}
  Again, since $n$ can be taken to be arbitrarily large and $o_n(1)$ is uniform in $a_1,\dots, a_n$, $\mu$ is indeed conformal w.r.t. $\phi^-$. 

Now, assume $h$ is an arbitrary positive eigenfunction with uniformly continuous logarithm and consider the potential  $\phi^h = \phi +\log h - \log h\circ T$. Then,
$$\phi^h - \phi^- = \psi +\log h - (\psi +\log h)\circ T. $$
Hence, the transfer function of $\phi^h$ and $\phi^-$ is $\psi^h = \psi + \log h$. Observe that in the construction of $\mu$ in Eq. (\ref{eq:onto_mu_construction}), we only assumed that $\phi$ and $\psi$ are uniformly continuous.\ Since $\log h$ is uniformly continuous, $\phi^h$ and $\psi^h$ are uniformly continuous as well. Since $L_{\phi^h}1=1$, there exists a measure $\mu$ which is conformal  w.r.t. $\phi^-$  and
$$\mu(e^{-\psi^h_x}1_{T^{-1}[(x)_0]})=1. $$
Since $$e^{-\psi^h_x}1_{T^{-1}[(x)_0]} = e^{-\psi_x}1_{T^{-1}[(x)_0]}h(x)=\chi_x h(x)$$
then 
$$h(x) = \mu(\chi_x). $$
\end{proof}
\begin{remk}
We emphasize that the correspondence established in Theorem \ref{thm:harm_conf_correspondence} is valid also for the recurrent potentials ($\sum_{n\geq 0}\lambda^{-n} L_\phi^n 1_{[a]}=\infty, \lambda = \exp P_G(\phi)$) where the $\lambda$-eigenfunction and the $\lambda$-eigenmeasure are both unique up to a constant by \cite{sarig_2001}. 
\end{remk}
Since the reversed Martin boundary is not empty, Theorem \ref{thm:harm_conf_correspondence} implies directly the existence of positive eigenfunctions.
\begin{thm}
Let $(X^{},T)$ be a  transitive locally compact two-sided TMS and let  $\phi:X^{+}\rightarrow\mathbb{R}$ be a  $\lambda$-transient potential function with summable variations.  Then, there exists a positive continuous $\lambda$-eigenfunction.
\end{thm}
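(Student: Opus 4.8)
The plan is to deduce this immediately from the duality of Theorem~\ref{thm:harm_conf_correspondence}, reducing the construction of a positive eigenfunction on $X^+$ to the already-established existence of a conformal measure on the \emph{negative} one-sided shift $X^-$. First I would invoke the cohomology proposition preceding Proposition~\ref{claim:left_right_transience} to produce a potential $\phi^-:X^-\rightarrow\mathbb{R}$ with summable variations together with a uniformly continuous transfer function $\psi:X\rightarrow\mathbb{R}$ such that $\phi^+=\phi$ and $\phi^-$ are cohomologous via $\psi$. Since $\phi^+$ is $\lambda$-transient, Proposition~\ref{claim:left_right_transience} then guarantees that $\phi^-$ is $\lambda$-transient as well.

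Next I would check that the reversed system $(X^-,T^{-1},\phi^-)$ satisfies the hypotheses (A1)--(A4) required to run the construction of Section~\ref{section:construction} and apply Theorem~\ref{thm:conf_existence}. Local compactness of $X^-$ is precisely assumption~\ref{assumption:locally_compact_two_sides}, which is in force because $(X,T)$ is assumed locally compact; transitivity of the two-sided shift descends to $X^-$; summability of the variations of $\phi^-$ is part of the cohomology proposition; and $\lambda$-transience of $\phi^-$ was obtained in the previous step. Consequently the reversed Martin compactification $\widehat{X^-}(\lambda)$ can be formed exactly as in Section~\ref{section:construction}, and since $|S|=\infty$ the space $X^-$ is non-compact, so its boundary $\cev{\mathcal{M}}(\lambda)$ is non-empty.

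With the hypotheses in place, Theorem~\ref{thm:conf_existence} applied to $(X^-,T^{-1},\phi^-)$ — equivalently, any boundary point $\omega\in\cev{\mathcal{M}}(\lambda)$ through the measure of Definition~\ref{def:omega_measure} for the reversed system — produces a positive Radon measure $\mu$ on $X^-$ with $L_{\phi^-}^*\mu=\lambda\mu$. Setting $h=\pi(\mu)$, that is $h(x)=\mu(\chi_x)$, Theorem~\ref{thm:harm_conf_correspondence} yields $h\in\harm(\lambda)$: the function $h$ is strictly positive, continuous, satisfies $L_\phi h=\lambda h$, and has uniformly continuous logarithm. In particular $h$ is the desired positive continuous $\lambda$-eigenfunction.

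I do not anticipate a genuine obstacle, since all the substantive work is already carried out in the earlier results; the single point demanding care is the bookkeeping that transitivity and local compactness genuinely pass from $(X,T)$ to $(X^-,T^{-1})$, so that the Martin construction and Theorem~\ref{thm:conf_existence} legitimately apply to the reversed system. Once that is confirmed, the eigenfunction is obtained with no additional effort beyond applying the duality map $\pi$.
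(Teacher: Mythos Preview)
Your proposal is correct and follows exactly the paper's approach: the paper's proof is the single sentence ``Since the reversed Martin boundary is not empty, Theorem~\ref{thm:harm_conf_correspondence} implies directly the existence of positive eigenfunctions,'' and you have simply unpacked the preliminary steps (existence of $\phi^-$, its $\lambda$-transience via Proposition~\ref{claim:left_right_transience}, and verification of (A1)--(A4) for $X^-$) that the paper establishes in the discussion immediately preceding the theorem.
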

With the correspondence of Theorem \ref{thm:harm_conf_correspondence}, one can easily obtain analogues to the results of Section \ref{section:integration}: 
\begin{defn}
A function $h\in \harm(\lambda)$ is \textit{$\lambda$-minimal } if for every $h'\in \harm(\lambda)$ with $h'\leq h$, we have that $h' \propto\ h$. 
\end{defn}
\begin{thm}
\label{thm:harmonic_rep}
Let $(X^{},T)$ be a  transitive locally compact two-sided TMS and let  $\phi:X^{+}\rightarrow\mathbb{R}$ be a  $\lambda$-transient potential function with summable variations. Then, for every $h\in \harm(\lambda)$, there exists a unique finite measure $\nu$ on $\cev{\mathcal{M}}_m(\lambda)$ s.t.
$$h(x) = \int_{\cev{\mathcal{M}}_m(\lambda)} \cev{K}(\chi_x,\xi|\lambda)d\nu(\xi) \quad \forall x\in X^+. $$
Moreover, $h$ is $\lambda$-minimal iff $\nu$ is a dirac measure, meaning $\pi^{-1} h \in \cev{\mathcal{M}}_m(\lambda)$.
\end{thm}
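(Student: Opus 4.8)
The plan is to transport the eigenmeasure representation theorems of Section \ref{section:integration} across the correspondence $\pi$ of Theorem \ref{thm:harm_conf_correspondence}. The key observation is that $\pi$ is not merely a bijection but an order-isomorphism: since $\pi(\mu)(x) = \mu(\chi_x)$ with each $\chi_x \in C_c^+(X^-)$ and $\pi$ is linear, we have that $\mu_1 \leq \mu_2$ (as measures on $X^-$) implies $\pi(\mu_1) \leq \pi(\mu_2)$ (as functions on $X^+$), and conversely, because $\pi^{-1}$ is also given by a positive linear recipe (reconstructing $\mu$ from its values $\mu(\chi_x)$ via the cylinder formula $\mu([a_n,\dots,a_1]) = e^{\phi_n(a_n,\dots,a_1x)}\mu(\chi_{a_n,\dots,a_1x})$ derived in the proof of Theorem \ref{thm:harm_conf_correspondence}). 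First I would verify this order-isomorphism explicitly, and then check that $\pi$ maps $\lambda$-minimal eigenfunctions to extremal conformal measures and vice versa: if $h = \pi(\mu)$ and $h = h_1 + h_2$ is a decomposition in $\harm(\lambda)$, then $\mu = \pi^{-1}(h_1) + \pi^{-1}(h_2)$ is a decomposition in $\conf(X^-,\lambda)$, and $h_i \propto h$ iff $\pi^{-1}(h_i) \propto \mu$. This establishes the final ``moreover'' clause identifying $\lambda$-minimality with $\pi^{-1}h \in \cev{\mathcal{M}}_m(\lambda)$.

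For the representation formula itself, I would apply Theorem \ref{thm:minimal_kernel_rep} to the measure $\mu = \pi^{-1}(h) \in \conf(X^-,\lambda)$: there is a unique finite measure $\nu$ on $\cev{\mathcal{M}}_m(\lambda)$ with
\begin{equation*}
\mu(f) = \int_{\cev{\mathcal{M}}_m(\lambda)} \cev{K}(f,\xi|\lambda)\,d\nu(\xi), \quad \forall f \in C_c(X^-).
\end{equation*}
Evaluating this at $f = \chi_x$ (which is legitimate since $\chi_x \in C_c^+(X^-)$) and using $h(x) = \mu(\chi_x)$ yields exactly
\begin{equation*}
h(x) = \int_{\cev{\mathcal{M}}_m(\lambda)} \cev{K}(\chi_x,\xi|\lambda)\,d\nu(\xi), \quad \forall x \in X^+,
\end{equation*}
as desired. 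The uniqueness of $\nu$ transfers immediately: any representing measure $\nu'$ for $h$ would, upon reconstructing the measure via $\pi^{-1}$, give a representation of $\mu$, and since $\pi$ is injective and the cylinder values of $\mu$ determine $\mu$, the uniqueness clause of Theorem \ref{thm:minimal_kernel_rep} forces $\nu' = \nu$.

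The main obstacle I anticipate is justifying that the representation which a priori holds only for $f \in C_c(X^-)$ can be applied to $f = \chi_x$ and integrated against $\nu$ to recover $h$ pointwise for \emph{every} $x$ simultaneously, with the correct measurability and finiteness. Concretely, one must confirm that $\xi \mapsto \cev{K}(\chi_x,\xi|\lambda)$ is $\nu$-integrable (which follows from Lemma \ref{lemma:kernel_uniform_boundness} applied on $X^-$, giving a uniform bound $\cev{K}(\chi_x,\cdot|\lambda) \leq C_{\chi_x}$, together with finiteness of $\nu$), and that the reconstruction of $\mu$ from the function $x \mapsto \mu(\chi_x)$ is faithful — this is precisely the content of the ``$\pi$ is $1$-$1$'' part of Theorem \ref{thm:harm_conf_correspondence}, which shows the cylinder masses of $\mu$ are determined by $\pi(\mu)$. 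A secondary subtlety is the claimed equivalence of $\lambda$-minimality of $h$ (defined via domination by functions in $\harm(\lambda)$) with extremality of $\mu$ in $\conf(X^-,\lambda)$; this requires that the order-isomorphism respects the cone structure and that the definition of $\lambda$-minimal eigenfunctions matches the notion of extremal ray under $\pi$, which is where the positivity and linearity of both $\pi$ and $\pi^{-1}$ must be used carefully.
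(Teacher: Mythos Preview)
Your proposal is correct and follows essentially the same route as the paper: transport Theorem \ref{thm:minimal_kernel_rep} across the bijection $\pi$ of Theorem \ref{thm:harm_conf_correspondence}, evaluate at $f=\chi_x$, and deduce uniqueness and the minimality/extremality equivalence from the linearity and injectivity of $\pi$. Your treatment is in fact more careful than the paper's---you explicitly verify the order-isomorphism needed to match the domination-based definition of $\lambda$-minimality with the decomposition-based definition of extremality, whereas the paper simply asserts ``since $\pi$ is linear, so is $\pi^{-1}$ and thus $h$ is minimal iff $\pi^{-1}h$ is extremal''---but the underlying argument is the same.
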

\begin{proof}
The existence follows directly from Theorem \ref{thm:minimal_kernel_rep} and Theorem \ref{thm:harm_conf_correspondence}. Since $\pi$ is linear, so is $\pi^{-1}$ and thus $h$ is minimal iff $\pi^{-1}h$ is extremal. As for the uniqueness, if there exist $\nu$ and $\nu'$ s.t. 
$$h(x) = \int_{\cev{\mathcal{M}}_m(\lambda)} \cev{K}(\chi_x,\xi|\lambda)d\nu(\xi)= \int_{\cev{\mathcal{M}}_m(\lambda)} \cev{K}(\chi_x,\xi|\lambda)d\nu'(\xi) \quad \forall x\in X^+  $$
then with $\mu = \int K(\cdot, \xi|\lambda) d\nu(\xi)$ and $\mu' = \int K(\cdot, \xi|\lambda) d\nu'(\xi)$ we have that $\pi\mu = \pi\mu'=h$. Since $\pi$ is $1-1$, $\mu=\mu'$. By Theorem \ref{thm:minimal_kernel_rep} we must have that $\nu=\nu'$.    
\end{proof}

In several applications, we consider $T$-invariant measures on $X$ of the form $m=h\mu$, where $h$ is a positive eigenfunction and $\mu$ is a positive eigenmeasure. The main results of  Theorem \ref{thm:boundary_rep_theorem} and Theorem \ref{thm:harmonic_rep} lead to a different interpretation for such a construction: \begin{enumerate}
\item 
Pick a finite measure $\nu^+$ on $\mathcal{M}_m$  and set $\mu^+= \int_{\mathcal{M}_m}\mu_\omega d\nu^+(\omega). $ 
\item 
Pick a finite measure $\nu^-$ on $\cev{\mathcal{M}}_m$  and set $\mu^-= \int_{\cev{\mathcal{M}}_m}\mu_\omega d\nu^-(\omega). $ 
\item
The resulting $T$-invariant  measure $m$ is, with $f\in C_c(X)$, 
\begin{equation}
\label{eq:hmu_rep}
m(f) = \int_{x\in X^+}\int_{y\in X^-} f(\dots, (y)_{-1}, (y)_0, (x)_0, (x)_1,\dots)  \chi_x(y)d\mu^-(y)d\mu^+(x). 
\end{equation}
\end{enumerate}
Theorem \ref{thm:harm_conf_correspondence} is valid in the recurrent case as well and so does Eq. (\ref{eq:hmu_rep}), although the measures $\mu^-$ and $\mu^+$ are unique up to normalization. See \cite{kaimanovich_1990,kaimanovich_1994} for similar decompositions in different settings.

To conclude the discussion on the reversed Martin boundary, we provide an example which shows that $\mathcal{M}$ and $\cev{\mathcal{M}}$ can be different.

\noindent\textbf{Example 1.}
\label{example:different_bounderies}
Consider $S=\mathbb{Z} \cup \{n':n\in \mathbb{N}\}$, where $n'$ is a different copy of $n$ and consider the   transition matrix $\mathbb{A}$ with $\mathbb{A}_{a,b}=1$ iff  one of the following cases 
\begin{itemize}
\item 
$a=0, b\in \{-1,+1\}. $
\item
$a=1', b=0. $
\item
$a,b\in \mathbb{Z}\setminus \{0\}\text{ and } b=a + sign(a).  $  
\item
$a\in \mathbb{Z}\setminus\{0\},b=n' \text{ with }|a|=n.  $
\item
$ a=(n+1)', b=n'$.
\end{itemize}

See Figure \ref{figure:different_bounderies}. Clearly the corresponding TMS is locally finite and transitive.
\begin{prop}
There is  $\alpha<0$ s.t. the potential function $\phi\equiv \alpha$ is transient, $\mathcal{M}$ contains  at least two points and $\cev{\mathcal{M}}$ contains a single point alone. 
\end{prop}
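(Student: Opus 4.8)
The plan is to reduce every Green's function to a weighted count of admissible paths and then read off the two boundaries from the combinatorics of the graph. Since $\phi\equiv\alpha$ is Markovian it is cohomologous to $\phi^-\equiv\alpha$ via the transfer function $\psi\equiv 0$, so both the forward kernel $K$ and the reversed kernel $\cev K$ are built from the single quantity $\Phi(a,b):=\sum_{n\ge 0}e^{n\alpha}p_n(a,b)$, where $p_n(a,b)$ is the number of admissible paths with $n$ edges from $a$ to $b$. A direct count gives, for a word $w=(w_0,\dots,w_m)$, that $G(1_{[w]},x\mid 1)=e^{m\alpha}\Phi(w_m,(x)_0)$ (up to finitely many low-order terms that vanish once $(x)_0$ is large) and, symmetrically, $\cev G(1_{[w]},y\mid1)=e^{m\alpha}\Phi((y)_0,w_{-m})$; thus all kernels reduce to ratios $\Phi(a,b)/\Phi(0,b)$ (forward) and $\Phi(b,a)/\Phi(b,0)$ (reversed). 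Fixing the origin $o=0$, I would first choose $\alpha$: decomposing any loop at $0$ into excursions of length $2k+1$ (two per $k\ge1$, one for each sign), the excursion generating function is $2z^3/(1-z^2)$, so $\Phi(0,0)=\bigl(1-\tfrac{2e^{3\alpha}}{1-e^{2\alpha}}\bigr)^{-1}<\infty$ and all $\Phi(a,b)<\infty$ once $\alpha$ is negative enough; for such $\alpha$, $\phi$ is transient.

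For the forward boundary I would test the two orbits of $x_\pm=(0,\pm1,\pm2,\dots)$, whose shifts $T^nx_\pm$ have zeroth coordinate $\pm n\to\infty$ and hence escape every compact set; by Proposition \ref{claim:compactification_prop} any $\rho$-limit lies in $\mathcal M(\lambda)$. The key computation is a last-exit decomposition at $0$: a path to a far positive target $n$ either visits $0$ (contributing $\Phi(a,0)e^{n\alpha}$, since after the last $0$ the only $0$-avoiding route to $n$ is the monotone climb) or avoids $0$ (possible only from a positive $a\le n$, contributing the single climb $e^{(n-a)\alpha}$), giving $\Phi(a,n)=\Phi(a,0)e^{n\alpha}+\mathbf 1_{\{a\in\{1,\dots,n\}\}}e^{(n-a)\alpha}$. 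Hence $K(1_{[a]},T^nx_+)$ is eventually constant in $n$, and the same holds for every cylinder, so $T^nx_+\to\omega_+\in\mathcal M$ and likewise $T^nx_-\to\omega_-$. Comparing the limits at the cylinder $[1]$ yields $K(1_{[1]},\omega_+)=(\Phi(1,0)+e^{-\alpha})/\Phi(0,0)$ against $K(1_{[1]},\omega_-)=\Phi(1,0)/\Phi(0,0)$, which differ by $e^{-\alpha}/\Phi(0,0)>0$; thus $\omega_+\ne\omega_-$ and $\mathcal M$ contains at least two points.

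For the reversed boundary the decisive structural point is that a left-infinite path can escape to infinity \emph{only} along the primed ladder. Analyzing in-neighbours, a positive (resp.\ negative) integer $k$ has the single predecessor $k-1$ (resp.\ $k+1$), so a backward trajectory sitting at a large integer must have descended to $0$ a bounded number of steps earlier and cannot remain large; the only state with an arbitrarily large predecessor leading to still-larger states is $m'$, whose primed predecessor is the unique $(m+1)'$. Consequently, for every $y$ with $T^{-n}y\to\cev{\mathcal M}$ one has $(y)_{-n}=k_n'$ with $k_n\to\infty$ for all large $n$. Since a forward path out of $k'$ is forced to descend deterministically to $0$ in $k$ steps, $\Phi(k',c)=e^{k\alpha}\Phi(0,c)$ for every integer $c$ and $\Phi(k',j')=e^{k\alpha}\Phi(0,j')+e^{(k-j)\alpha}$ for a primed state $j'$; hence $\cev K(1_{[w]},T^{-n}y)$ depends only on $k_n$ and is eventually constant, with a value depending on $w$ alone. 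Therefore all backward-escaping orbits converge to one and the same boundary point, and since $X^-$ is noncompact and locally compact, $\cev{\mathcal M}$ consists of a single point.

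The main obstacle is the reversed direction: proving rigorously that escape to infinity in $X^-$ is forced up the primed ladder, and deducing from this that the reversed kernel limit is independent of the escaping sequence. Everything else is finite path-counting bookkeeping built on the last-exit decomposition at $0$.
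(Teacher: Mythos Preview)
Your proposal is correct and is, in several respects, more complete than the paper's own argument.

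For transience, the paper simply bounds the out-degree by $2$ and takes $\alpha<-\log 2$; your excursion generating function $2z^3/(1-z^2)$ is an equivalent (and sharper) way to pin down the transient range.

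For the forward boundary, the paper and you use different decompositions. The paper factors through a \emph{first-passage} at the state $1$: since every path from $-1$ to $n>0$ must cross $1$, one gets $G(1_{[-1]},x_n^+)=F(\alpha,-1,1)\,G(1_{[1]},x_n^+)$, and then symmetry $G(1_{[1]},x_n^+)=G(1_{[-1]},x_n^-)$ yields $K(1_{[-1]},x_n^+)=F(\alpha,-1,1)\,K(1_{[-1]},x_n^-)$; one then needs to \emph{choose} $\alpha$ so that $F(\alpha,-1,1)\neq 1$. Your route is a \emph{last-exit} decomposition at $0$, which produces the explicit formula $\Phi(a,n)=\Phi(a,0)e^{n\alpha}+\mathbf{1}_{\{1\le a\le n\}}e^{(n-a)\alpha}$ and hence a concrete difference $K(1_{[1]},\omega_+)-K(1_{[1]},\omega_-)=e^{-\alpha}/\Phi(0,0)>0$. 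This is slightly stronger: it shows $\omega_+\neq\omega_-$ for \emph{every} transient $\alpha$, with no further tuning. Your argument also makes the existence of the $\rho$-limits $\omega_\pm$ explicit (eventual constancy of all cylinder kernels), a point the paper passes over.

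For the reversed boundary, the paper's argument is a single sentence (``$n'$ with $n\to\infty$ is the only possible direction which escapes every finite set''); your in-neighbour analysis showing that any backward-escaping trajectory is eventually forced up the primed ladder, together with the identity $\Phi(k',c)=e^{k\alpha}\Phi(0,c)$ (plus the prime correction) showing the kernels stabilise, supplies exactly the verification that sentence needs.
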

\begin{proof}
Since the out-degree of any state is bounded by $2$,  for any $x\in X^+$ 
$$(L_\phi^n 1_{[o]})(x) \leq e^{n\alpha} 2^n $$
whence, with $\alpha < -\log2$, $G(1_{[o]},x)<\infty$ and the potential $\phi \equiv \alpha$ is indeed transient.

 In the reversed graph, $n'$ with $n\rightarrow\infty$ is the only possible direction which escapes every finite set. Hence\ $\cev{\mathcal{M}}$ cannot contain more than a single point.  In the original graph, $n\rightarrow \infty$ or $n\rightarrow -\infty$ are the only two   possible directions to escape every finite set. We show that they may correspond to two different points in $\mathcal{M}$. Let $x_n^+\in [n]$ and let $x_n^- \in [-n]$. Let
$$Z_n^*(\alpha,  a,b) \\
= e^{n\alpha}\cdot \#\left\{\begin{array}{c}
\text{paths of length }n \text{ from }a\text{ to }b,  \\
\text{ first reaching }b \text{ in the }n^{\text{th}} \text{ step} \\
\end{array}\right\}$$
and let 
$$ F(\alpha,a,b) = \sum_{n\geq 0}Z_n^* (\alpha, a,b).$$
Observe that 
$$L_\phi^n 1_{[a]}(x) = e^{n\alpha}\cdot \#\{\text{paths from }a\text{ to }(x)_0 \text{ of length }n\}. $$
Since every path from $-1$ to $n$ must pass through $1$, $$G(1_{[-1]},x_{n}^{+}) = F(\alpha,-1,1) G(1_{[1]},x_{n}^{+}). $$
By the symmetry of the graph, $G(1_{[1]},x_{n}^+) = G(1_{[-1]},x_{n^{}}^-)$. Hence, 
$$K(1_{[-1]},x_{n}^{+}) = F(\alpha,-1,1)K(1_{[-1]},x_{n}^-).$$
Since $F(\alpha, -1,1)$ varies with $\alpha$,  we can decrease $\alpha$ so that $F(\alpha, -1, 1)\neq1$. Observe that decreasing $\alpha$ does not affect the transience of $\phi$. Then, 
\begin{align*}
K(1_{[-1]},\infty) =& \lim_{n\rightarrow\infty} K(1_{[-1]},x_{n}^{+})\\
=&\lim_{n\rightarrow\infty}F(\alpha, -1, 1)K(1_{[-1]},x_{n}^{-})\\
=& F(\alpha, -1,1)K(1_{[-1]},-\infty)  
\end{align*}
and in particular $K(\cdot,\infty)\neq K(\cdot, -\infty)$. 
\end{proof}

        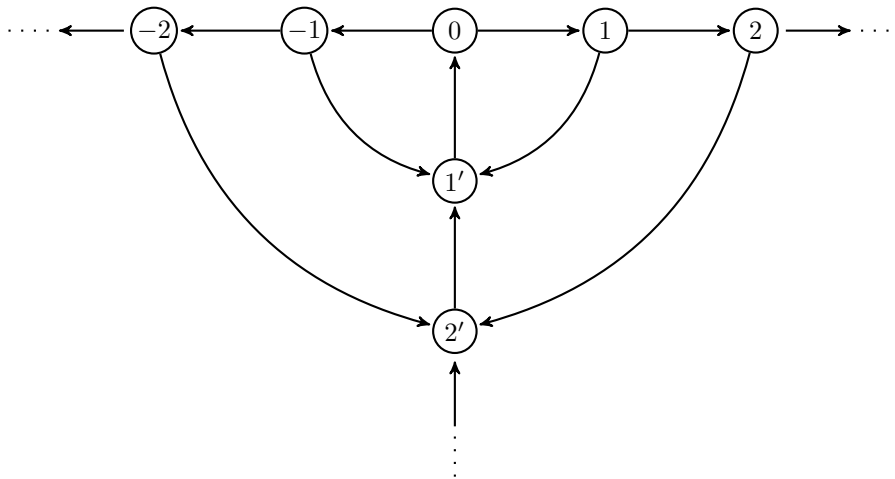
\begin{figure}[!htb]

\begin{center}
\begin{tikzpicture}[-,>=stealth',shorten >=1pt,auto,
                    thick,main node/.style={circle,draw,font=\sffamily\Large\bfseries}]

  \node[circle,draw,inner sep=3pt, minimum size=0pt] at (0,0) (0) {$0$};
  \node[circle,draw,inner sep=3pt, minimum size=0pt] at (2,0) (1) {$1$};
  \node[circle,draw,inner sep=3pt, minimum size=0pt] at (4,0) (2) {$2$};
  \node[circle,draw,inner sep=1pt, minimum size=0pt] at (-2,0) (-1) {$-1$};
  \node[circle,draw,inner sep=1pt, minimum size=0pt] at (-4,0) (-2) {$-2$}; 
  \node[circle,draw,inner sep=2pt, minimum size=0pt] at (0,-2) (1') {$1'$};
  \node[circle,draw,inner sep=2pt, minimum size=0pt] at (0,-4) (2') {$2'$};

  \path[every node/.style={font=\sffamily\small}]
    (0) edge [->]  (1)    
    (1) edge [->]  (2)    
    (0) edge [->]  (-1)    
    (-1) edge [->]  (-2)    
    (1') edge [->]  (0)    
    (2') edge [->]  (1') 
      
    (-1) edge [bend right=30,->]  (1')    
    (-2) edge[bend right=30,->]  (2')
    (1) edge [bend right=-30,->]  (1')    
    (2) edge[bend right=-30,->]  (2')
;

\draw [->, thick] (-4.4,0) -- (-5.3,0);
\draw [loosely dotted, thick] (-5.4,0) -- (-6.1,0);
\draw [->, thick] (4.4,0) -- (5.3,0);
\draw [loosely dotted, thick] (5.4,0) -- (6.1,0);
\draw [<-, thick] (0,-4.4) -- (0,-5.3);
\draw [loosely dotted, thick] (0,-5.4) -- (0,-6.1);

\end{tikzpicture}
\end{center}
\caption{The states graph in Example 1.}
\label{figure:different_bounderies}
\end{figure}

\section{Poisson boundary and dominated eigenfunctions}
The purpose of the probabilistic Poisson boundary   is to describe all bounded harmonic functions, equivalently all positive eigenfunctions which are bounded by the eigenfunction $h \equiv 1$;
  see Section \ref{section:random_walk} for the connection between the eigenfunctions and the harmonic functions. For more on the probabilistic Poisson boundary, see \cite{kaimanovich_1995}.

  In the non-probabilistic setup, with general $\phi$, the constant function $h \equiv 1$ may not be  an eigenfunction of the   Ruelle operator. In particular,  there is no canonical choice for the dominating eigenfunction.
Here, we generalize the notion of the Poisson boundary to study all eigenfunctions which are dominated by some fixed eigenfunction $h$.
In particular, we derive an integral representation of dominated eigenfunctions on the boundary and prove an almost-sure convergence to some ratio.  

In contrast to the previous parts of this paper, the discussion in this section has a measure-theoretic flavour. In particular, the  Poisson boundary is studied as a measure space unlike the Martin boundary which is studied as a topological space. \begin{defn}
Let $h\in \mathcal{H}^+(\lambda)$, let $\mu_h =\pi^{-1}(h)$ be the corresponding measure on $X^-$ and let $\nu_h$ be  the unique measure on $\cev{\mathcal{M}}_m(\lambda)$ s.t. for all $x\in X^+$
$$h (x) = \int_{\cev{\mathcal{M}}_m(\lambda) } \cev{K}(\chi_x, \omega|\lambda)d\nu_h(\omega). $$
See Theorems \ref{thm:harm_conf_correspondence} and \ref{thm:harmonic_rep}. Let
$$\mathcal{H}^\infty (\lambda,h)\\
=\left\{f\in C(X^{+}) : \begin{array}{c}
\exists c>0 \text{ s.t. } |f|\leq c\cdot h,\; L_\phi f=\lambda f \text{ and } \\
\log f\text{ is uniformly continuous} \\
\end{array} \right\}.
$$
The \textit{($\lambda, h)$-Poisson boundary }is the measure space $(\cev{\mathcal{M}}_m(\lambda), \nu_h)$.
\end{defn}
Clearly every function $\varphi \in L^{\infty}(\cev{\mathcal{M}}_m(\lambda), \nu_h)$ defines a function in $f_{\varphi}\in\mathcal{H}^\infty (\lambda,h)$ by
$$ f_{\varphi}(x) = \int \cev{K}(\chi_x, \omega|\lambda)\varphi(\omega)d\nu_h(\omega).$$ We show that every $f \in \mathcal{H}^\infty(\lambda, h)$ is uniquely determined by a function $\varphi_f\in L^\infty(\cev{\mathcal{M}}_m(\lambda), \nu_h)$ which is the limit of $\frac{f}{h}$  in  $\cev{\mathcal{M}}_m$.
To prove these results, we introduce the boundary map and show that it is Borel-measurable.\begin{defn} Given a Poisson boundary $(\cev{\mathcal{M}}_m(\lambda), \nu_h)$, we define the \textit{boundary map} $\textbf{Bnd}: (X^{-}, \mu_h) \rightarrow (\cev{\mathcal{M}}_m(\lambda), \nu_h)$, 
$$\textbf{Bnd}(y) = \lim_{n\rightarrow\infty}T^{-n}y_. $$
By Theorem \ref{thm:conv_to_boundary}, the limit $\mu_h$-a.s. exists and $\textbf{Bnd}$ is well-defined.
\end{defn}\begin{prop}
Let $(X^{-},T^{-1})$ be a transitive locally compact negative one-sided TMS and let $\phi$ be a $\lambda$-transient potential function with summable variations.  Then, the map $\textbf{Bnd}$ is Borel-measurable.
\end{prop}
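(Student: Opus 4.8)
The plan is to realize $\textbf{Bnd}$ as the pointwise limit of a sequence of continuous maps and then invoke the standard fact that a pointwise limit of Borel maps into a metric space is again Borel. Let $\widehat{X^-}(\lambda)$ denote the reversed Martin compactification of $X^-$ (the analogue of $\widehat{X^+}(\lambda)$ from Section \ref{section:construction}), equipped with its metric $\cev{\rho}$ and boundary $\cev{\mathcal{M}}(\lambda)$. For each $n\ge 0$ define $g_n\colon X^-\to\widehat{X^-}(\lambda)$ by $g_n(y)=T^{-n}y$, where the point $T^{-n}y\in X^-$ is viewed inside the compactification through the canonical inclusion $X^-\hookrightarrow\widehat{X^-}(\lambda)$. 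Each $g_n$ is continuous: the right shift $T^{-n}$ is $d$-continuous on $X^-$, by the reversed form of Proposition \ref{claim:compactification_prop}\ref{claim:compactification_prop_2} the $d$- and $\cev{\rho}$-topologies coincide on $X^-$, and the inclusion into the completion is continuous. In particular every $g_n$ is Borel.

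Next I would isolate the Borel set on which the limit exists. Since $\widehat{X^-}(\lambda)$ is a compact metric space, a sequence converges there iff it is Cauchy, so
\[
D=\Bigl\{y\in X^-:\lim_{n\to\infty}g_n(y)\text{ exists}\Bigr\}=\bigcap_{k\ge 1}\bigcup_{N\ge 1}\bigcap_{m,n\ge N}\bigl\{y:\cev{\rho}(g_m(y),g_n(y))<1/k\bigr\}.
\]
Each map $y\mapsto\cev{\rho}(g_m(y),g_n(y))$ is continuous, so every set appearing here is open and $D$ is Borel. By the reversed analogue of Theorem \ref{thm:conv_to_boundary}, $\mu_h(X^-\setminus D)=0$ and, for $\mu_h$-a.e.\ $y$, $\textbf{Bnd}(y)=\lim_n g_n(y)$ lands in $\cev{\mathcal{M}}_m(\lambda)$; on the null set $X^-\setminus D$ we may set $\textbf{Bnd}$ equal to a fixed boundary point, which does not affect measurability.

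It then remains to check measurability of the limit on $D$. For a closed set $C\subseteq\widehat{X^-}(\lambda)$ one has $\cev{\rho}(g_n(y),C)\to\cev{\rho}(\textbf{Bnd}(y),C)$, and since $C$ is closed, $\textbf{Bnd}(y)\in C$ iff $\cev{\rho}(\textbf{Bnd}(y),C)=0$; this gives
\[
\textbf{Bnd}^{-1}(C)\cap D=D\cap\bigcap_{k\ge 1}\bigcup_{N\ge 1}\bigcap_{n\ge N}\bigl\{y:\cev{\rho}(g_n(y),C)<1/k\bigr\},
\]
which is Borel. Hence $\textbf{Bnd}$ is Borel on $D$. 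Finally, because $\cev{\mathcal{M}}_m(\lambda)$ is a Borel subset of $\widehat{X^-}(\lambda)$ (by the reversed form of Proposition \ref{prop:minimal_borel}), every Borel subset of $\cev{\mathcal{M}}_m(\lambda)$ is Borel in $\widehat{X^-}(\lambda)$, so $\textbf{Bnd}\colon(X^-,\mu_h)\to(\cev{\mathcal{M}}_m(\lambda),\nu_h)$ is Borel-measurable.

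The only genuinely substantial inputs are the continuity of the $g_n$, which rests on the compatibility of the $d$- and $\cev{\rho}$-topologies on $X^-$, and the $\mu_h$-a.e.\ existence of the limit together with its membership in the minimal boundary, both supplied by the reversed analogue of Theorem \ref{thm:conv_to_boundary}. The remaining work is the routine Cauchy/$\liminf$ bookkeeping for metric-valued pointwise limits, which I expect to be the least delicate part of the argument.
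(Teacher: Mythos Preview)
Your argument is correct and is essentially the paper's approach, just packaged more carefully: both reduce to the identity
\[
\textbf{Bnd}^{-1}(C)=\bigcap_{m\ge 1}\bigcup_{N\ge 0}\bigcap_{n\ge N}T^{n}B_{\cev\rho}(C,1/m)
\]
for closed $C$, using that $\cev\rho$-open subsets of $X^-$ are $d$-open. The paper writes this formula directly (and somewhat loosely, stating it for an arbitrary Borel $A$ rather than for closed sets), whereas you reach the same computation via the ``pointwise limit of continuous maps'' framing and are more explicit about the domain $D$ of convergence and the role of $\cev{\mathcal{M}}_m(\lambda)$ being Borel.
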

\begin{proof}
Let $A \subseteq \cev{\mathcal{M}}_m$ be a $\rho$-Borel set. By Proposition \ref{claim:compactification_prop}, for every $\epsilon>0$, the set
$$B_\rho(A,\epsilon) := \left\{y\in X^{-} : \rho(A,y) < \epsilon\right\}$$ 
is $d$-open. Since we can write
$$\omega^{-1}(A) = \bigcap_{m\geq 1}  \bigcup_{n\geq 0}\bigcap_{k\geq n}T^{-k} B_\rho(A,1/m)$$
the map is indeed $d$-measurable.
\end{proof}

\begin{thm}
Let $(X,T)$ be a transitive locally compact two-sided TMS and let $\phi$ be a $\lambda$-transient potential function with summable variations. 
Let $h\in \mathcal{H}^+(\lambda)$ and let $f\in \mathcal{H}^\infty(\lambda,h)$. Then, there exists a unique $\varphi_f \in L^\infty(\cev{\mathcal{M}}_m, \nu_h)$ s.t. for all $x\in X^+$,
$$ f (x) = \int \cev{K}(\chi_x, \omega|\lambda) \varphi_{f}(\omega) d\nu_h(\omega)$$
 and for $\mu_h$-a.e. $y\in T^{-1}[(x)_0] \subseteq  X^-$,
$$\frac{f(y_{-n},\dots, y_0 x)}{h(y_{-n},\dots, y_0 x)} \xrightarrow[n\rightarrow\infty]{} \varphi_f(\textbf{Bnd}(y)). $$
\end{thm}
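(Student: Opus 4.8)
We may assume $\lambda=1$, replacing $\phi$ by $\phi-\log\lambda$ as usual. Throughout, write $\mu_h=\pi^{-1}(h)$ for the $1$-conformal measure on $X^-$ corresponding to $h$ under Theorem \ref{thm:harm_conf_correspondence}, and recall from Theorem \ref{thm:harmonic_rep} that $\nu_h$ is exactly the unique minimal-boundary measure representing $\mu_h$, i.e. $\mu_h=\int_{\cev{\mathcal M}_m}\cev K(\cdot,\omega)\,d\nu_h(\omega)$. The plan is to split the proof into two parts: first I construct $\varphi_f$ together with the integral formula by a domination argument on the negative shift, and then I identify $\varphi_f\circ\textbf{Bnd}$ as the almost-sure limit of the stated ratios through a bounded-martingale argument.

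For the first part, set $\mu_f=\pi^{-1}(f)$, a $1$-conformal measure on $X^-$ since $f\in\harm(1)$. The point is to transfer the pointwise bound $f\le c\,h$ to the measures. Using the exact cylinder identity obtained in the ``$\pi$ is $1$-$1$'' step of the proof of Theorem \ref{thm:harm_conf_correspondence}, namely $\mu^x[a_n,\dots,a_1]=e^{\phi_n(a_n\dots a_1x)}\mu(\chi_{a_n\dots a_1x})=e^{\phi_n(a_n\dots a_1x)}\,\pi(\mu)(a_n\dots a_1x)$, applied to both $\mu_f$ and $\mu_h$, I obtain $\mu_f^x[a_n,\dots,a_1]\le c\,\mu_h^x[a_n,\dots,a_1]$ on every cylinder and every $x$, hence $\mu_f\le c\,\mu_h$ after reassembling via Eq. (\ref{eq:onto_mu_construction}); in particular $\mu_f\ll\mu_h$. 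Representing both measures on the minimal boundary through Theorem \ref{thm:minimal_kernel_rep} and using that a positive conformal measure has a positive representing measure, the fact that $c\mu_h-\mu_f\ge 0$ is again conformal forces its representing measure $c\nu_h-\nu_f$ to be nonnegative, so $\nu_f\le c\,\nu_h$ and $\varphi_f:=d\nu_f/d\nu_h\in L^\infty(\cev{\mathcal M}_m,\nu_h)$ with $0\le\varphi_f\le c$. Then $f(x)=\mu_f(\chi_x)=\int\cev K(\chi_x,\omega)\,d\nu_f(\omega)=\int\cev K(\chi_x,\omega)\varphi_f(\omega)\,d\nu_h(\omega)$, and uniqueness of $\varphi_f$ follows at once from the uniqueness clause of Theorem \ref{thm:minimal_kernel_rep}.

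For the second part, fix $x$, put $w_n:=(y_{-n},\dots,y_0,x)\in X^+$ and $M_n(y):=f(w_n)/h(w_n)$, and work on the fibre $T^{-1}[(x)_0]$ with the finite measure $\mathbb P:=\chi_x\mu_h$, which is equivalent to $\mu_h$ there since $\chi_x>0$ precisely on $T^{-1}[(x)_0]$. Let $\mathcal G_n=\sigma\big((y)_0,\dots,(y)_{-n}\big)$, so each $M_n$ is $\mathcal G_n$-measurable and $|M_n|\le c$. The heart of the matter is the disintegration $\mu_h=\int_{\cev{\mathcal M}_m}\mu_\omega\,d\nu_h(\omega)$ together with the fact, from Theorem \ref{thm:conv_to_boundary}(2) applied to $(X^-,T^{-1},\phi^-)$, that $\textbf{Bnd}(y)=\omega$ for $\mu_\omega$-a.e. $y$; thus each $\mu_\omega$ is carried by $\textbf{Bnd}^{-1}(\omega)$ and the displayed formula is genuinely the disintegration of $\mu_h$ along $\textbf{Bnd}$. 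Combining this with the cylinder identity $\int_{[\,y_{-n}\dots y_0\,]}\chi_x\,d\mu_\omega=e^{\phi_{n+1}(w_n)}\cev K(\chi_{w_n},\omega)$ and $\varphi_f\circ\textbf{Bnd}=\varphi_f(\omega)$ $\mu_\omega$-a.e., I compute on each such cylinder that $\int(\varphi_f\circ\textbf{Bnd})\,d\mathbb P=e^{\phi_{n+1}(w_n)}f(w_n)$ while $\mathbb P$ of the cylinder equals $e^{\phi_{n+1}(w_n)}h(w_n)$; dividing gives the martingale identity $M_n=E_{\mathbb P}[\varphi_f\circ\textbf{Bnd}\mid\mathcal G_n]$. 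Since $\varphi_f\circ\textbf{Bnd}$ is bounded and $\bigvee_n\mathcal G_n$-measurable (being a pointwise limit of coordinate maps), L\'evy's upward convergence theorem yields $M_n\to\varphi_f\circ\textbf{Bnd}$ both $\mathbb P$- and hence $\mu_h$-a.e. on $T^{-1}[(x)_0]$, which is exactly the asserted convergence.

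The main obstacle is establishing the martingale identity $M_n=E_{\mathbb P}[\varphi_f\circ\textbf{Bnd}\mid\mathcal G_n]$: everything hinges on recognizing $\mu_h=\int\mu_\omega\,d\nu_h(\omega)$ as the honest disintegration of $\mu_h$ over the boundary map, which requires Theorem \ref{thm:conv_to_boundary}(2) to pin each extremal measure $\mu_\omega$ to a single fibre $\textbf{Bnd}^{-1}(\omega)$, and then the \emph{exact} cylinder evaluation of $\int_{[\,\cdot\,]}\chi_x\,d\mu_\omega$ inherited from the ``$\pi$ is $1$-$1$'' computation of Theorem \ref{thm:harm_conf_correspondence} (note that this identity carries no $o_n(1)$ error, as it uses only conformality of $\mu_\omega$ and Lemma \ref{lemma:cohomo_lemma}). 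Once this identity is secured, the remaining ingredients — boundedness of the martingale, L\'evy's theorem, and the measurability of $\textbf{Bnd}$ — are routine, and the $\mu_h$-a.e. statement follows from $\mathbb P\sim\mu_h|_{T^{-1}[(x)_0]}$.
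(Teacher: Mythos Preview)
Your proposal is correct and follows essentially the same architecture as the paper: pass to the negative shift via the bijection $\pi$ of Theorem \ref{thm:harm_conf_correspondence}, use domination to get $\nu_f\le c\,\nu_h$ and hence $\varphi_f=d\nu_f/d\nu_h\in L^\infty$, then identify $f(w_n)/h(w_n)$ as a bounded martingale with limit $\varphi_f\circ\textbf{Bnd}$. The one notable difference is in how the martingale identity is obtained: the paper proves an auxiliary lemma $\cev K(g\,\tilde\varphi_f,\omega)=\cev K(g,\omega)\varphi_f(\omega)$ (using that $\tilde\varphi_f$ is tail-invariant and passing to the limit along a $\mu_\omega$-typical orbit), whereas you go straight through the disintegration $\mu_h=\int\mu_\omega\,d\nu_h$ and the fact that $\mu_\omega$ is carried by $\textbf{Bnd}^{-1}(\omega)$; your route is arguably cleaner since it avoids extending the Martin kernel to discontinuous integrands.
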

\begin{proof}
Assume w.l.o.g. that $\lambda=1$. Assume first that $0\leq f \leq h$. Then, $f\in \mathcal{H}^+$ and by Theorem \ref{thm:harmonic_rep} there is measure $\nu_h$ on $\cev{\mathcal{M}}_m$ s.t.
$$f(x) = \int \cev{K}(\chi_x, \omega) d\nu_f(\omega).$$
Since the bijection $\pi$ from Theorem \ref{thm:harm_conf_correspondence} is linear and $f \leq h$, we have that $\nu_f \leq \nu_h$  and the Radon-Nikodym $\varphi_f = \frac{d\nu_f}{d\nu_h}$ exists and  is bounded by $1$. In particular, 
$$f(x) = \int \cev{K}(\chi_x, \omega)\varphi_f(\omega)  d\nu_h(\omega). $$
For a general $f\in \mathcal{H}^\infty(\lambda, h)$ with $|f|\leq c\cdot h$, consider $f_{1} =  \frac{1}{2}(h + c^{-1}f)$. Clearly $0~\leq f_{1} \leq h$ and so there is $\varphi_{f_1}\in L^1(\cev{\mathcal{M}}_m, \nu_h)$ with
$$f_1(x) = \int \cev{K}(\chi_x, \omega)\varphi_{f_1}(\omega)  d\nu_h(\omega).  $$
Then, with $\varphi_{f} = 2c \varphi_{f_1} - c$, 
\begin{align*} \int \cev{K}(\chi_x, \omega)\varphi_{f}(\omega)  d\nu_h(\omega) =&2c \int \cev{K}(\chi_x, \omega)\varphi_{f_1}(\omega)  d\nu_h(\omega) -c  \int \cev{K}(\chi_x, \omega)  d\nu_h(\omega)\\
=&2c f_1(x) - ch(x) = f(x).
  \end{align*}

Let $\mu_h^x = \chi_x \mu_h$. Recall that $\mu_h^x$ is a finite measure on $X^-$, see proof of Theorem \ref{thm:harm_conf_correspondence}. Let $\tilde \varphi_f\in L^1(X^-, \mu^x)$, 
$$\tilde \varphi_f(y) = \varphi_f(\textbf{Bnd}(y)).  $$
 \begin{lem}
 For all $\omega\in \cev{\mathcal{M}}_m$ and all $g\in C_c(\chi_x)$, 
 $$\cev{K}(g \tilde \varphi_f, \omega) = \cev{K}(g, \omega) \varphi_f(\omega).  \ $$
 \end{lem}
 \begin{proof}Fix $\omega \in \cev{\mathcal{M}}_m$. Since $\textbf{Bnd}$ is a tail-invariant function, for all $y\in X^-$, 
 $$\cev{K}(g \tilde {\varphi_f}, y) = \cev{K}(g, y)\tilde  \varphi_f(z). $$
Let $y\in X^-$ be a typical point w.r.t. the measure $\mu_\omega$, namely $T^{-n}y \rightarrow \omega$ in $\widehat{X^-}$. Then, for all $n\geq 0$, 
 $$\cev{K}(g \tilde \varphi_f,T^{-n} y) = \cev{K}(g, T^{-n} y)\tilde  \varphi_f(T^{-n} y) =\cev{K}(g, T^{-n} y)  \varphi_f(\omega) .$$
The lemma follows  by taking $n\rightarrow\infty$. 
 \end{proof}
 By  the auxiliary lemma,
\begin{align*}\mu_{h}^x(\tilde {\varphi_f}) =&\mu_{h}( \chi_x\cdot \tilde {\varphi_f})\\
=& \int_{\cev{\mathcal{M}}_m} \cev{K}(\chi_x\cdot  \tilde {\varphi_f},\omega)d\nu_h(\omega) \\
=& \int_{\cev{\mathcal{M}}_m}  \cev{K}(\chi_x,\omega) \varphi_{f}(\omega)d\nu_h(\omega)=f(x).
\end{align*}
Let $\mathcal{B}_n$ be the $\sigma$-algebra generated by the cylinders of length $n$ in $X^-$ and let
$$g_n(y) = \mathbb{E}_{\mu^x_h}[\tilde {\varphi_f}|\mathcal{B}_n](y) $$
the conditional expectation of $\tilde {\varphi_f}$ w.r.t. the $\sigma$-algebra $\mathcal{B}_n$. We show that
$$g_n(y) =\frac{f(y_{-n+1},\dots, y_0 x)}{h(y_{-n+1},\dots, y_0 x)}  $$
Then, by the martingale convergence theorem,
$$\lim_{n\rightarrow\infty}\frac{f(y_{-n},\dots, y_0 x)}{h(y_{-n},\dots, y_0 x)}=\lim_{n\rightarrow\infty}\mathbb{E}_{\mu^x_h}[\tilde {\varphi_f}|\mathcal{B}_n](y)  = \tilde {\varphi_f}(y). $$

The conditional expectation on the $\sigma$-algebra $\mathcal{B}_n$ has the following formula 
$$g_{n}(y) = \frac{1}{\mu_{h}^x[(y)_{-n+1},\dots, (y)_0]} \mu_{h}^x(\tilde {\varphi_f} \cdot 1_{[(y)_{-n+1},\dots, (y)_0]}).$$
Let $g:X^-\rightarrow X^-$ be a $T^{-1}$-invariant function. We derive, with $a_n,\dots, a_1\in S$ admissible, \begin{align*}
&\mu^x_{h}(g\cdot 1_{[a_n,\dots, a_1]}) \\
&=  \mu^x_{h}(g\circ T^{-n}\cdot 1_{[a_n,\dots, a_1]}) \\
&= \int_{[a_n,\dots, a_1]} g( T^{-n}y)e^{-\psi^{}_x(y)}d\mu_{h}(y) \qquad \\
&= \int_{T^{-1}[a_n]}g(y)e^{\phi_n^-(ya_n,\dots, a_1) - \psi^{}(ya_n,\dots, a_1\dot x)}d\mu_{h}(y) \qquad(\because \mu_{h} \text{ is } L_{\phi^-}\text{-conformal})\\
&= \int_{T^{-1}[a_n]}g(y)e^{\phi_n(a_n,\dots, a_1x) - \psi(y\dot a_n,\dots, a_1x)}d\mu_{h}(y) \qquad(\because \text{ Lemma }\ref{lemma:cohomo_lemma})\\
&= e^{\phi^{}_n(a_n,\dots, a_1x)}\mu_{h}^{}(g\chi_{a_n,\dots, a_1x}) .
\end{align*} 
Since $\mu_h(\chi_{a_n,\dots, a_1x}) = h(a_n,\dots, a_1 x)$, 
$$g_{n}(y)=\frac{\mu_{h}^x(\tilde {\varphi_f} \cdot 1_{[(y)_{-n+1},\dots, (y)_0]})}{\mu_{h}^x[(y)_{-n+1},\dots, (y)_0]} =\frac{f(y_{-n+1},\dots, y_0 x)}{h(y_{-n+1},\dots, y_0 x)}. $$

\end{proof}

\section{Applications to first-order phase transitions}
\subsection{Background}
In this section we apply our results to the theory of Gibbs states and first order phase transitions.
Recall that a thermodynamic system is said to undergo a {\em phase transition of the first order} if there are several possible equilibrium values to some of its thermodynamic quantities. The question of how to formalize this was studied extensively in the sixties, see e.g. \cite{ruelle_1969,lanford_1969,dobrushin_1968}. Here we follow the program of Dobrushin, Lanford and Ruelle which formalizes a phase transition of the first order as a situation where there are several Dobrushin-Lanford-Ruelle (DLR) measures, see Section \ref{Section-DLR} below.
An alternative approach to first-order phase transitions is to view them as situations where the thermodynamic limit is not unique, see Section \ref{sec:TDL} below. The two approaches are often equivalent, see \cite{ruelle_2004}.

We show here that if $\phi$ is transient with a Martin Boundary bigger than one point, then there are several different non-singular DLR states, each of which corresponds to thermodynamic limits where the ``boundary conditions" escape to infinity in different directions. Compare with \cite{follmer_1975}.

\subsection{Existence and non-uniqueness of DLR measures}\label{Section-DLR} Recall that  $(X^+,T)$ is a topologically mixing locally compact countable Markov shift and that $\phi:X^+\to\mathbb R$  is a potential function with summable variations and finite Gurevich pressure. The following definition is a version of the classical definition of a DLR measure, tailored to fit our one-dimensional, one-sided, infinite state scenario. See \cite{dobrushin_1968,lanford_1969,georgii_2011} for more general cases.

\begin{defn}We say that a probability measure $m$ is a {\em Dobrushin-Lanford-Ruelle measure} for $\phi$ if
for all $n\geq 1$ and $m$-a.e. $x\in X^+$,\begin{equation}
\label{eq:dlr_condition}
\mathbb{E}_m[1_{[(x)_0,,\dots, (x)_{n-1}]}|T^{-n}\mathscr{B}](x) = \frac{e^{\phi_n(x)}}{\sum_{y:T^ny=T^nx}e^{\phi_n(y)}}
 \end{equation}
 where  $\mathscr{B}$ is the Borel $\sigma$-algebra of $X^+$ and $\mathbb{E}_m[\cdot | T^{-n}\mathscr{B}]$ is the conditional expectation of $m$ w.r.t. the $\sigma$-algebra $T^{-n}\mathscr{B}$.
\end{defn}

 Recall that a positive Radon measure $\mu$ is \textit{non-singular} if for every Borel set $A\subseteq X^+$, $\mu(A)=0$ iff $\mu(T^{-1}A)=0$. The connection between DLR measures and eigenmeasures is explained in the following  propositions.  \begin{prop}[]\label{Prop-ConformalIsDLR} Let $\phi:X^{+}\rightarrow\mathbb{R}$ be a Borel function and let $\nu$ be a non-singular  probability measure  with $L_\phi^\ast\nu=\lambda\nu$ for some $\lambda>0$.
Then $\nu$ is a non-singular DLR measure for $\phi$. 
\end{prop}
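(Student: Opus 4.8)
The plan is to verify the DLR identity one finite word at a time and then patch the results together over the countably many admissible words of each length. Fix $n\geq 1$ and an admissible word $a=(a_0,\dots,a_{n-1})$. Since the DLR condition at a point $x$ involves the cylinder determined by $x$ itself, and on $[a]$ the function $1_{[(x)_0,\dots,(x)_{n-1}]}$ coincides with $1_{[a]}$, it suffices to identify $\mathbb{E}_\nu[1_{[a]}\,|\,T^{-n}\mathscr{B}]$ $\nu$-a.e.\ on $[a]$ with the right-hand side of Eq.~(\ref{eq:dlr_condition}). My candidate is the $T^{-n}\mathscr{B}$-measurable function $H_a\circ T^n$, where
$$H_a(z)=\frac{(L_\phi^n 1_{[a]})(z)}{(L_\phi^n 1)(z)},\qquad (L_\phi^n 1)(z)=\sum_{y:T^n y=z}e^{\phi_n(y)}.$$
The point is that $(L_\phi^n 1_{[a]})(z)$ is the single term $e^{\phi_n(y)}$ coming from the unique $T^n$-preimage $y$ of $z$ whose first $n$ coordinates spell $a$ (and $0$ if none exists), so that for $x\in[a]$ we get $(L_\phi^n 1_{[a]})(T^n x)=e^{\phi_n(x)}$ and $H_a(T^n x)$ reduces to exactly $e^{\phi_n(x)}/\sum_{y:T^ny=T^nx}e^{\phi_n(y)}$.

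The engine of the proof is the single transfer identity
$$\int g\,(w\circ T^n)\,d\nu=\lambda^{-n}\int w\,(L_\phi^n g)\,d\nu\qquad(\ast)$$
valid for all bounded Borel $g,w$. To obtain it I would first record the pointwise factorization $L_\phi^n\big(g\cdot(w\circ T^n)\big)=w\cdot(L_\phi^n g)$, which is immediate from the definition of $L_\phi^n$ once one notes that $w\circ T^n$ is constant along each $T^n$-fibre. Integrating and invoking the iterate of the conformality hypothesis, $\int L_\phi^n f\,d\nu=\lambda^n\int f\,d\nu$, then yields $(\ast)$. This iterate I would derive for $f\in C_c$ directly from $L_\phi^\ast\nu=\lambda\nu$ and extend to bounded Borel $f\geq 0$ by monotone approximation, observing that $\int L_\phi^n f\,d\nu=\lambda^n\int f\,d\nu\leq\lambda^n\|f\|_\infty$ already forces $L_\phi^n f\in L^1(\nu)$.

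With $(\ast)$ in hand the verification of the conditional expectation is short. Applying $(\ast)$ with $g=1_{[a]}$ gives $\int 1_{[a]}\,(w\circ T^n)\,d\nu=\lambda^{-n}\int w\,(L_\phi^n 1_{[a]})\,d\nu$ for every bounded Borel $w$, while applying $(\ast)$ with $g\equiv 1$ and test function $H_a w$ (bounded) gives $\int (H_a\circ T^n)(w\circ T^n)\,d\nu=\lambda^{-n}\int H_a\,w\,(L_\phi^n 1)\,d\nu=\lambda^{-n}\int w\,(L_\phi^n 1_{[a]})\,d\nu$, the last equality by cancelling $L_\phi^n 1$ (valid $\nu$-a.e.). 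The two right-hand sides agree, so by the characterizing property of conditional expectation $\mathbb{E}_\nu[1_{[a]}\,|\,T^{-n}\mathscr{B}]=H_a\circ T^n$ $\nu$-a.e. Evaluating on $[a]$ produces Eq.~(\ref{eq:dlr_condition}) for the word $a$; a countable union of the exceptional null sets over all admissible words $a$ of length $n$, and then over all $n$, gives the statement for $\nu$-a.e.\ $x$.

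The only genuine obstacle is ensuring the ratio $H_a(T^n x)$ is well defined $\nu$-a.e., i.e.\ that the denominator $(L_\phi^n 1)(T^n x)$ is finite and positive. Positivity is free, since $x$ is itself a $T^n$-preimage of $T^n x$ contributing the strictly positive weight $e^{\phi_n(x)}$. Finiteness is precisely where the non-singularity hypothesis is used: from $\int L_\phi^n 1\,d\nu=\lambda^n<\infty$ the set $E=\{z:(L_\phi^n 1)(z)=\infty\}$ is $\nu$-null, and non-singularity ($\nu(E)=0\Rightarrow\nu(T^{-n}E)=0$) propagates this to $\nu\{x:(L_\phi^n 1)(T^n x)=\infty\}=0$. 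This is exactly what lets us read the DLR conditional probability as an honest ratio rather than a formal one, and it explains why non-singularity appears in the hypotheses.
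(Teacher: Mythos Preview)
Your proof is correct. The paper does not supply its own argument for this proposition but simply refers the reader to \cite{petersen_1997,sarig_2015}, so there is no in-paper proof to compare against; your transfer-identity verification of the conditional expectation is the standard route and is cleanly carried out, and your identification of non-singularity as precisely the hypothesis that pushes the $\nu$-a.e.\ finiteness of $L_\phi^n 1$ forward along $T^n$ is exactly right.

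One small remark on presentation: since the proposition allows $\phi$ to be merely Borel (no local compactness or summable variations are assumed here), the hypothesis $L_\phi^\ast\nu=\lambda\nu$ is most naturally read directly as the identity $\int L_\phi f\,d\nu=\lambda\int f\,d\nu$ for all non-negative Borel $f$ (equivalently, the Jacobian relation $e^\phi=\lambda\,d\nu/d(\nu\circ T)$). Your detour through $C_c$ is therefore unnecessary in this generality---you may simply take the integral identity for bounded non-negative Borel $f$ as the starting point and iterate.
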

\begin{proof}
See \cite{petersen_1997,sarig_2015}. 
\end{proof}

\begin{prop}\label{Prop-DLRisConformal} 
 Let $\nu$ be a non-singular DLR measure for $\phi$. Then, there exists a function $h:X^+\rightarrow\mathbb{R}$, which is measurable w.r.t. the $\sigma$-algebra $\cap_{n\geq0}T^{-n}\mathscr{B}$, s.t. $L_{\phi + h}^\ast\nu=\nu$.

\end{prop}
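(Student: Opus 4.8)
The plan is to realize $h$ as (minus) the logarithm of the Jacobian of $\nu$, and then to extract its tail-measurability from the \emph{whole} family of DLR identities rather than just the one for $n=1$. Since $\nu$ is non-singular, $\nu\circ T^{-1}\sim\nu$, and $T$ is countable-to-one, there is a $\nu$-a.e.\ unique Jacobian $J_\nu$, i.e.\ a measurable function with $0<J_\nu<\infty$ $\nu$-a.e.\ and $\nu(TA)=\int_A J_\nu\,d\nu$ for every Borel set $A$ on which $T$ is injective (each cylinder $[a]$ is such a set). A change of variables over the inverse branches shows, exactly as in the Jacobian characterization of conformal measures used for Proposition~\ref{Prop-ConformalIsDLR} (see \cite{petersen_1997,sarig_2015,aaronson_1997}), that for any Borel $\psi$ one has $L_\psi^\ast\nu=\nu$ iff $J_\nu=e^{-\psi}$ $\nu$-a.e. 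Hence, setting $h:=-\log J_\nu-\phi$, the relation $L_{\phi+h}^\ast\nu=\nu$ holds \emph{by construction}, and the entire content of the proposition is that this (now forced) $h$ is measurable with respect to the tail $\sigma$-algebra $\bigcap_{n\ge0}T^{-n}\mathscr{B}$.

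Next I would record the standard disintegration formula for the conditional expectation onto $T^{-n}\mathscr{B}$. Writing $J_\nu^{(n)}=\prod_{i=0}^{n-1}J_\nu\circ T^i$ for the Jacobian of $T^n$ and using that $[(x)_0,\dots,(x)_{n-1}]$ is a domain of injectivity of $T^n$, the same inverse-branch change of variables gives, for $\nu$-a.e.\ $x$,
\[
\mathbb{E}_\nu\left[1_{[(x)_0,\dots,(x)_{n-1}]}\mid T^{-n}\mathscr{B}\right](x)=\frac{1/J_\nu^{(n)}(x)}{\sum_{y:\,T^ny=T^nx}1/J_\nu^{(n)}(y)}.
\]
By the definition of $h$ we have $J_\nu^{(n)}=e^{-(\phi_n+h_n)}$ with $h_n:=\sum_{i=0}^{n-1}h\circ T^i$, so the right-hand side equals $e^{\phi_n(x)+h_n(x)}\big/\sum_{T^ny=T^nx}e^{\phi_n(y)+h_n(y)}$. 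Comparing with the DLR condition~(\ref{eq:dlr_condition}) and cancelling the factor $e^{\phi_n}$ on the fibre $\{y:T^ny=T^nx\}$ forces $e^{h_n(y)}$ to be constant along that fibre; that is, $h_n$ is $T^{-n}\mathscr{B}$-measurable for every $n\ge1$.

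Finally I would bootstrap from the additive cocycle $h_n$ back to $h$. From $h_n=h+h_{n-1}\circ T$ one obtains the identity $h=h_n-h_{n-1}\circ T$. Here $h_n$ is $T^{-n}\mathscr{B}$-measurable by the previous paragraph, while $h_{n-1}$ is $T^{-(n-1)}\mathscr{B}$-measurable, so $h_{n-1}\circ T$ is $T^{-n}\mathscr{B}$-measurable; therefore $h$ is $T^{-n}\mathscr{B}$-measurable. Since $n$ is arbitrary, $h\in\bigcap_{n\ge0}T^{-n}\mathscr{B}$, which is the desired conclusion.

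I expect the main difficulty to be measure-theoretic bookkeeping rather than a conceptual obstacle: one must justify that $J_\nu$ is well defined, strictly positive and finite $\nu$-a.e.\ (so $\log J_\nu$ is meaningful), that the fibre sums $\sum_{T^ny=T^nx}e^{\phi_n(y)}$ are finite $\nu$-a.e.\ so the quotients are legitimate, and that the ``cancel $e^{\phi_n}$ and compare on fibres'' step as well as the implicit ``cancel one copy of $T$'' in the cocycle argument are valid $\nu$-a.e. Each of these is a standard consequence of local compactness together with the non-singularity $\nu\circ T^{-1}\sim\nu$, so the argument should go through cleanly once these points are checked.
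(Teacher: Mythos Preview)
Your argument is correct and matches the paper's approach: both define $h$ as the difference between $\phi$ and the log-Jacobian $\phi'=-\log J_\nu$ of $\nu$ (so that $L_{\phi+h}^\ast\nu=\nu$ is automatic), and then compare the DLR identities for $\phi$ and for $\phi+h$ to force tail-measurability of $h$. The only organizational difference is that the paper passes through the holonomy reformulation of DLR (Claim~\ref{claim:DLR_property}) and runs an induction on the word length, whereas your cocycle identity $h=h_n-h_{n-1}\circ T$ reaches the same conclusion in a single step from the conditional-expectation formula; otherwise the proofs coincide.
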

\begin{proof}
See appendix. 
\end{proof}
In the infinite state case, there may exist DLR measures which are not non-singular. These measures may not correspond to eigenmeasures of $L_\phi$, see Example 2 in the appendix.

 We would like to relate the richness of the Martin boundary to first-order phase-transitions. However, the resulting conformal measures in Section \ref{section:construction} may be infinite. To overcome this problem, one can ``adjust" the conformal measures by a uniformly continuous density to obtain conformal probability measures  w.r.t. a different but cohomologous potential function.

\begin{prop}\label{cor:transient_DLR}
Assume that $X^+$ is locally compact and topologically mixing and that $\phi:X^+\rightarrow\mathbb{R}$ is $\lambda$-transient and has summable variations. Then, there exists a uniformly continuous function  $h:X^+\rightarrow\mathbb{R}$ s.t. for every $\mu\in \mathcal{M}(\lambda)$, the measure $\frac{1}{\mu(h)}h\mu$ is a DLR  measure w.r.t. $\phi -\log h + \log h\circ T$. 
\end{prop}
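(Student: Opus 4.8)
The plan is to construct a single positive density $h$, depending only on the zeroth coordinate, which is uniformly continuous together with its logarithm and is simultaneously integrable against every conformal measure; the reweighted measure $h\mu$ will then be a \emph{finite} conformal measure for a cohomologous potential, and Proposition \ref{Prop-ConformalIsDLR} will finish the job.

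First I would extract uniform mass bounds for the extremal measures. Applying Lemma \ref{lemma:kernel_uniform_boundness} to $f=1_{[a]}\in C_c^+(X^+)$ yields, for each state $a$, a constant $C_a>0$ with $K(1_{[a]},x|\lambda)\le C_a$ for all $x\in X^+$; since $K(1_{[a]},\cdot|\lambda)$ extends continuously to $\widehat{X^+}(\lambda)$ (Proposition \ref{claim:compactification_prop}), the same bound holds on the boundary, so $\mu_\omega([a])=K(1_{[a]},\omega|\lambda)\le C_a$ for every $\omega\in\mathcal{M}(\lambda)$. Now choose weights $w_a>0$ with $\sum_{a\in S}w_aC_a<\infty$ and set $h:=\sum_{a\in S}w_a 1_{[a]}$. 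Because $h$ depends only on $(x)_0$, any two points at distance less than $1$ agree on $h$; hence both $h$ and $\log h$ are uniformly continuous, and $h>0$.

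The key identity is purely formal: for any positive $h$ the cohomologous potential $\psi:=\phi+\log h-\log h\circ T$ satisfies $L_\psi g = h^{-1}L_\phi(hg)$, so dualizing against $h\mu$ and using $L_\phi^*\mu=\lambda\mu$ gives $L_\psi^*(h\mu)=\lambda(h\mu)$, with no assumption that $h$ be an eigenfunction. (Equivalently $\frac{d(h\mu)}{d((h\mu)\circ T)}=\lambda^{-1}e^{\psi}$; the displayed sign of the coboundary is the one forced by requiring $h\mu$, rather than $h^{-1}\mu$, to be conformal.) Finiteness is then immediate from the uniform bounds: $\mu_\omega(h)=\sum_a w_a\mu_\omega([a])\le\sum_a w_aC_a<\infty$, while $\mu_\omega(h)>0$ since $\mu_\omega([a])>0$ by Lemma \ref{lemma:positive_on_cylinders}; for a general $\mu\in\conf(\lambda)$ one integrates the bound $\mu([a])\le C_a\,\nu(\mathcal{M})$ against the representing measure $\nu$ of Theorem \ref{thm:boundary_rep_theorem}. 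Thus $m:=\mu(h)^{-1}h\mu$ is a probability measure with $L_\psi^* m=\lambda m$.

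Finally, $m$ is non-singular because its Jacobian $\lambda^{-1}e^{\psi}$ is everywhere positive and finite, and $\psi$ is Borel; Proposition \ref{Prop-ConformalIsDLR} then shows $m$ is a non-singular DLR measure for $\psi$. Since the DLR kernel involves only the ratios $e^{\psi_n(x)}/\sum_{T^n y=T^n x}e^{\psi_n(y)}$, which are insensitive to an additive constant, the eigenvalue $\lambda$ plays no role. The crux of the argument, and its only real obstacle, is producing one density that is integrable against \emph{all} conformal measures at once while keeping $\log h$ uniformly continuous; the uniform kernel bound of Lemma \ref{lemma:kernel_uniform_boundness} supplies the former, and taking $h$ to be a function of the first coordinate trivializes the latter.
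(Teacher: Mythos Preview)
Your proof is correct and follows the same construction as the paper: both build $h=\sum_a w_a 1_{[a]}$ as a function of the zeroth coordinate with weights chosen so that $\sum_a w_a C_a<\infty$ (the paper takes $w_{a_n}=2^{-n}/C_{a_n}$ using the constants of Lemma~\ref{lemma:kernel_boundness}, you leave them abstract and cite Lemma~\ref{lemma:kernel_uniform_boundness}, which amounts to the same bound $K(1_{[a]},\cdot)\le C_a$). You also fill in details the paper suppresses---the explicit verification that $L_\psi^\ast(h\mu)=\lambda(h\mu)$, the correct sign $\psi=\phi+\log h-\log h\circ T$ (the statement's sign is a typo), and the non-singularity check needed to invoke Proposition~\ref{Prop-ConformalIsDLR}.
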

\begin{proof}
Let $C_a>0$ the constant from Lemma \ref{lemma:kernel_boundness} and consider
the function$$h(x) = \sum_{n=1}^\infty\frac{1}{2^n C_{a_n}}1_{[a_n]} $$
where $\{a_n\}$ is an enumeration of $S$. Then, for every $\mu\in \mathcal{M}(\lambda)$, the measure $d\mu^h = hd\mu $ is finite and $L_{\phi -\log h + \log h\circ T}^*\mu^h = \lambda \mu^h$. Therefore, by Proposition \ref{Prop-ConformalIsDLR}, $\frac{1}{\mu^h(1)}\mu^h$ is a DLR measure.
\end{proof}
Proposition \ref{cor:transient_DLR} implies that if the Martin Boundary of $\phi-\log h+\log h\circ T$ contains more than one point, then $\phi-\log h+\log h\circ T$ has more than one non-singular DLR state, namely a first order phase transition. Examples include simple random walks on $d$-regular trees ($d\geq 3$) \cite{sawyer_1997} and Example 1 in Section \ref{section:harmonic_functions}.

\subsection{Thermodynamic limits}
\label{sec:TDL}
 We will now interpret the  DLR states arising from different points in the Martin boundary as  thermodynamic limits with different boundary conditions. 

In our context, thermodynamic limits arise from the following scheme:
\begin{enumerate}
\item Approximate $X^{+}$ with finite  subsets $X_N$ by imposing a boundary condition which rules out all but a discrete collection of  configurations.
\item Define the ``canonical ensemble" $\mu_N$ on $X_N$ by giving configurations weights according to the Gibbs formula and then normalizing as possible.
\item Pass to the limit in some regime where $X_N$ fills $X^{+}$ densely. The weak star limit points of $\mu_N$ are called thermodynamic limits or Gibbs states.
\end{enumerate}
The mathematical question is which limiting regimes give weak-star convergence, and what are the limiting measures.

We describe here the limiting regimes which work in the positive recurrent and the null recurrent scenarios. For more on positive and null recurrence, see \cite{sarig_2001}. To simplify calculations, we assume that $P_G(\phi)=0$ and that $T:X^+\rightarrow X^+$ is topologically mixing.  

\textbf{The positive recurrent case:} $L_\phi^n1_{[o]}$ is eventually bounded below \cite{sarig_2001}.\begin{enumerate}
\item Fix $x\in X^+$ and let
$$X_N(x):=\{y\in X^+: y_N^\infty=x_0^\infty\}\equiv \{y: T^N y=x\}.$$
\item 
Set
$$\mu_N=\frac{1}{Z_N(x)}\sum_{y\in X_N(x)} e^{\phi_N(y)}\delta_y$$ where $Z_N(x)=\sum_{y\in X_N(x)} e^{\phi_N(y)}1_{[o]}(y)$. By the generalized Ruelle's Perron-Frobenius theorem \cite{sarig_1999,sarig_2001}  for every $f\in C_c^+(X^+)$
$$
\mu_N(f)=\frac{(L_\phi^N f)(x)}{(L_\phi^N 1_{[o]}(y))(x)}\xrightarrow[N\to\infty]{}\frac{\mu(f)}{\mu[o]}
$$
\end{enumerate}
where $\mu$ is the unique eigemeasure (which is also a DLR state).

\textbf{The null recurrent case:} $L_\phi^n1_{[o]}\xrightarrow[n\rightarrow\infty]{}0$ but $\sum_{n}L_\phi^n1_{[o]}=\infty$. Now, the previous procedure is problematic because the numerator and denominator both tend to zero. 
So we use the following alternative scheme.
\begin{enumerate}
\item Fix $x\in X^+$ and let 
$$X_N'(x):=\{y\in X^+: y_n^\infty=x_0^\infty\text{ for }0\leq n< N\}=\bigcup_{n=0}^{N-1} X_n(x).$$
\item 
Set
$$\mu_N=\frac{1}{Z_N'(x)}{\sum_{n=0}^{N-1}\sum_{y\in X_n'(x)} e^{\phi_n(y)}\delta_y}.$$ where $$Z_N'(x) ={\sum_{n=0}^{N-1}\sum_{y\in X_n'(x)} e^{\phi_n(y)}1_{[o]}}(y).$$
Again, by the generalized Ruelle's Perron-Frobenius theorem \cite{sarig_2001}
$$
\mu_N(f)=\frac{\sum_{n=0}^{N-1}(L_\phi^n f)(x)}{\sum_{n=0}^{N-1} (L_\phi^n 1_{[o]})(x)}\xrightarrow[N\to\infty]{}\frac{\mu(f)}{\mu[o]} ,
$$
\end{enumerate}
where $\mu$ is the unique eigenmeasure (which is also a DLR state for $\phi-\log h + \log h\circ T$, see  Proposition \ref{cor:transient_DLR}).

\textbf{The transient case:}  $\sum_{n}L_\phi^n 1_{[o]}<\infty$ . Now  the ``edge effects" do not vanish in the limit and a different procedure is required. We suggest here the following limiting regime which avoids this issue. Let $\mathcal M=\mathcal M(\exp P_G(\phi))$ the Martin boundary of $\phi$.
\begin{enumerate}
\item Fix $\omega\in \mathcal M$ and $x\in X^+$ s.t. $T^n x\xrightarrow[n\to\infty]{}\omega$. By Theorem \ref{thm:conv_to_boundary}, such $x$ exists. Take
$$
X_N''(x):=\{y\in X^+: \exists m>0\text{ s.t. }y_m^\infty=x_N^\infty\}=\bigcup_{m=0}^\infty X_m(T^N x).
$$
Notice that $\bigcup_{N}X''_N(x)=\{y: \exists m, d(T^{n+m}y,T^nx)\rightarrow 0\}$ is the symbolic analogue of the weak-star manifold of $x$.  
\item Set 
$$\mu_N=\frac{\sum_{m=0}^\infty\sum_{y\in X_m(T^N x)} e^{\phi_m(y)}\delta_y}{\sum_{m=0}^\infty\sum_{y\in X_m(T^N x)} e^{\phi_m(y)}1_{[o]}(y)}.$$ By Proposition \ref{claim:compactification_prop},
$$
\mu_N(f)=\frac{G(f,T^nx)}{G(1_{[o]},T^Nx)}=K(f,T^Nx)\xrightarrow[N\to\infty]{}\mu_\omega(f)
$$
\end{enumerate}
where $\mu_\omega$ is given in Definition \ref{def:omega_measure}.
Again, since $\mu_\omega$ is an eigenmeasure,  the thermodynamic limit is a DLR state for $\phi-\log h + \log h\circ T$, see  Proposition \ref{cor:transient_DLR}. However, this time the choice of boundary condition $x$ matters; if we work with a different boundary condition $x'$ with $T^N x'\xrightarrow[N\to\infty]{}\omega'\neq \omega$, then the thermodynamic limit we will get is $\mu_{\omega'}\neq \mu_\omega$.

\section{The Martin boundary of a  transient random walk}
\label{section:random_walk}
In this section we illustrate why the boundaries constructed in Section  \ref{section:construction} and  Theorems \ref{thm:minimal_kernel_rep} and \ref{thm:conv_to_boundary} are, in some sense, a generalization of the  probabilistic Martin boundary.  Let $(Z,P)$ be a random walk on a countable set $S$, with random variable $Z=(Z_n) \in X^{+}$ and $P:S\times S\rightarrow[0,1]$ a probability transition matrix.  Recall that a function $h:S\rightarrow\mathbb{R}$ is  $P$-\textit{harmonic} if
$$h(a) = \sum_{b}P(a,b)h(b), \quad \forall a\in S. $$ 
\begin{thm}
\label{thm:prob_martin_boundary}
Assume that the walk is transient, locally finite and irreducible. 

\begin{enumerate}

\item 
\label{thm:prob_martin_boundary1}
Let $h:S\rightarrow\mathbb{R}^+$ be a positive $P$-harmonic function. Then, there exists a unique  measure $\nu$ on $\mathcal{M}_m$ s.t. 
$$h(a) = \int_{\mathcal{M}_m}K([a], \omega)d\nu(\omega) $$
where the Martin kernel and Martin boundary are obtained from the potential function  $\phi(x) = \log P((x)_0,(x)_1)$. 
\item
For every $a\in S$, and $A \subseteq \mathcal{M}_m$,
$${\Pr}_a[ \lim_{n\rightarrow\infty} T^n Z\in A] = \int_A K([a], \omega)d\nu_1(\omega) $$
where $\nu_1$ is the measure from  \ref{thm:prob_martin_boundary1}) with the harmonic function $h\equiv1$.
\end{enumerate}
\end{thm}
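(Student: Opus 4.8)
The plan is to set up a precise dictionary between the random-walk objects on $S$ and the thermodynamic objects of Sections \ref{section:construction}--\ref{section:convergence} attached to the Markovian potential $\phi(x)=\log P((x)_0,(x)_1)$, and then to read off both statements from Theorems \ref{thm:minimal_kernel_rep} and \ref{thm:conv_to_boundary}. The starting computation is that $e^{\phi_n(y)}=\prod_{i=0}^{n-1}P((y)_i,(y)_{i+1})$ is the weight of a path, so summing over all $y$ with $T^ny=x$ and $(y)_0=a$ gives
\begin{equation*}
(L_\phi^n 1_{[a]})(x)=P^n(a,(x)_0),\qquad\text{hence}\qquad G(1_{[a]},x)=G(a,(x)_0),
\end{equation*}
where $P^n$ and $G$ on the right are the $n$-step transition probabilities and the classical Green's function of the walk. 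Since $\phi$ is Markovian it has summable variations, and since the walk is transient and locally finite, $G(1_{[o]},x)=G(o,(x)_0)<\infty$ and $X^+$ is locally compact; thus $\phi$ is $1$-transient, assumptions (A1)--(A4) hold, and all results proved above apply with $\lambda=1$. Note also that $K(1_{[a]},x)=G(a,(x)_0)/G(o,(x)_0)$ is precisely the classical Martin kernel of the walk evaluated at the state $(x)_0$, so the kernel appearing in the theorem is the probabilistic one.

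Next I would establish a linear bijection between positive $P$-harmonic functions on $S$ and the cone $\conf(1)$. Applying $L_\phi^\ast\mu=\mu$ to $1_{[a]}$ and using $(L_\phi 1_{[a]})(x)=P(a,(x)_0)$ shows that $h(a):=\mu([a])$ satisfies $h(a)=\sum_b P(a,b)h(b)$, i.e.\ $h$ is $P$-harmonic. Iterating the identity $L_\phi 1_{[a_0,\dots,a_m]}=P(a_0,a_1)\,1_{[a_1,\dots,a_m]}$ together with conformality yields the factorization
\begin{equation*}
\mu([a_0,\dots,a_m])=\Big(\prod_{i=0}^{m-1}P(a_i,a_{i+1})\Big)\,h(a_m),
\end{equation*}
so $\mu$ is completely determined by $h$. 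Conversely, for any positive $P$-harmonic $h$ this same formula defines a nonnegative assignment on cylinders which, by harmonicity, satisfies the Kolmogorov consistency $\sum_{a_{m+1}}\mu([a_0,\dots,a_m,a_{m+1}])=\mu([a_0,\dots,a_m])$; it therefore extends to a Radon measure (finite on compacts, since $\mu([a])=h(a)<\infty$), and the computation of $L_\phi 1_{[a_0,\dots,a_m]}$ shows $\mu(L_\phi 1_{[a_0,\dots,a_m]})=\mu([a_0,\dots,a_m])$, so $\mu\in\conf(1)$. These two maps are mutually inverse, with $h=\mu([\cdot])$.

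Part (1) is then immediate: given positive $P$-harmonic $h$, let $\mu\in\conf(1)$ be the corresponding conformal measure and apply Theorem \ref{thm:minimal_kernel_rep} to obtain the unique finite measure $\nu$ on $\mathcal{M}_m$ with $\mu=\int_{\mathcal{M}_m}\mu_\omega\,d\nu(\omega)$. Evaluating both sides at $1_{[a]}$ gives $h(a)=\mu([a])=\int_{\mathcal{M}_m}K(1_{[a]},\omega)\,d\nu(\omega)$, and uniqueness of $\nu$ follows from the uniqueness in Theorem \ref{thm:minimal_kernel_rep} together with the injectivity of $h\mapsto\mu$.

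For Part (2) I would take $h\equiv 1$, with conformal measure $\mu_1$ satisfying $\mu_1([a_0,\dots,a_m])=\prod_i P(a_i,a_{i+1})$; in particular $\mu_1([a])=1$, so $\mu_1|_{[a]}$ is exactly the law $\Pr_a$ of the walk started at $a$, and $\Pr_a[\lim_n T^nZ\in A]=\mu_1\{x\in[a]:\lim_n T^nx\in A\}$. Writing $\mu_1=\int_{\mathcal{M}_m}\mu_\omega\,d\nu_1(\omega)$ and using that, by Theorem \ref{thm:conv_to_boundary}(2), $\mu_\omega$-a.e.\ $x$ has $T^nx\to\omega$, the $\mu_\omega$-measure of the tail event $\{x\in[a]:\lim_n T^nx\in A\}$ equals $K(1_{[a]},\omega)$ when $\omega\in A$ and $0$ otherwise; integrating gives
\begin{equation*}
\Pr_a[\lim_n T^n Z\in A]=\int_A K(1_{[a]},\omega)\,d\nu_1(\omega)
\end{equation*}
(for $a=o$ this is exactly Theorem \ref{thm:conv_to_boundary}(3), since $K(1_{[o]},\omega)=1$). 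The only genuine work lies in the dictionary of the first two paragraphs; the representation and the almost-sure convergence are then inherited verbatim from the general theory. The point requiring care is measure-theoretic, namely that the integral representation $\mu_1=\int\mu_\omega\,d\nu_1$ may be applied to the indicator of the Borel tail event $\{\lim_n T^nx\in A\}$ and that $\omega\mapsto\mu_\omega\{x\in[a]:\lim_n T^nx\in A\}$ is measurable; both follow from the measurability of the boundary behaviour established in the proof of Theorem \ref{thm:conv_to_boundary}.
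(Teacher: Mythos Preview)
Your proposal is correct and follows essentially the same approach as the paper: you set up the linear bijection $h\leftrightarrow\mu$ between positive $P$-harmonic functions and $\conf(1)$ via $\mu([a_0,\dots,a_m])=\prod_i P(a_i,a_{i+1})\,h(a_m)$, then read off both parts from Theorems \ref{thm:minimal_kernel_rep} and \ref{thm:conv_to_boundary}. The paper does exactly this (see Remark \ref{remark:P_harmonic_func}), only presenting the correspondence in the direction $h\mapsto\mu$ and checking conformality on cylinders; your version is slightly more explicit in running the bijection both ways and in flagging the measurability of $\omega\mapsto\mu_\omega\{x\in[a]:\lim_n T^nx\in A\}$, but the strategy is identical.
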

\begin{remk}
\label{remark:P_harmonic_func}
Notice that a $P$-harmonic function is not an eigenfunction of $L_\phi$, but rather an eigenfunction of  $L_{\phi^-}$, with $\phi^- (y) = \log P((y)_{-1}, (y)_0)$.
In particular, in the following proof, to simplify calculations we explicitly present  the correspondence between the $P$-harmonic functions and the conformal measures, rather than  applying Theorem \ref{thm:harm_conf_correspondence} or Theorem \ref{thm:harmonic_rep}.
\end{remk}
\begin{proof}
 Since
$$(L_\phi^n 1_{[a]})(x) = p^n((x)_0,a) $$
then
$$G(1_{[a]},x) = \sum_{n=0}^\infty p^n((x)_0,a)<\infty $$
and $\phi$ is indeed transient. \begin{enumerate}
\item 
We define a measure  
\begin{align}
\label{eq:func_to_measure}
&\mu([a]) = h(a) \\
&\mu([a_1,\dots, a_n]) =  e^{\phi(a_1,a_{2})}\cdots e^{\phi(a_{n-1},a_{n})}h(a_n).\nonumber
\end{align}
Since
\begin{align*} \sum_{b}\mu([a_1,\dots, a_n,b]) =& \sum_{b}e^{\phi(a_1,a_{2})}\cdots e^{\phi(a_{n-1},a_{n})}e^{\phi(a_n,b)}h(b)\\
=&e^{\phi(a_1,a_{2})}\cdots e^{\phi(a_{n-1},a_{n})}h(a_n)\\
=& \mu([a_1,\dots, a_n])
\end{align*}
$\mu$ can be extended to a measure via Carath\'eodory's extension theorem.
Moreover, since
\begin{align*}
(L_\phi^* \mu)([a])=& \mu(L_\phi 1_{[a]})\\
=&  \int e^{\phi(ax)}d\mu(x)\\
=& \sum_{b}e^{\phi(ab)}\mu([b])\\
=& \sum_{b}e^{\phi(ab)}h(b) = h(a) = \mu([a])
\end{align*}
and
\begin{align*} (L_\phi^* \mu)([a_1,\dots, a_n])=& \mu(L_\phi 1_{[a_1,\dots, a_n]})\\
=& \int \sum_{b}e^{\phi(bx)}1_{[a_1,\dots, a_n]}(bx)d\mu(x) \\
=& \int e^{\phi(a_{1}a_{2})}1_{[a_2,\dots, a_n]}(x)d\mu(x)\\
 =&e^{\phi(a_{1}a_{2})}\mu([a_2,\dots, a_n])=\mu([a_1,\dots, a_n])
\end{align*}
we have that $\mu\in \conf$. This establish a linear $1-1$ correspondence between the positive  $P$-harmonic functions and the conformal measures. 
 Theorem \ref{thm:minimal_kernel_rep} implies that there exists a unique measure $\nu$ s.t.
$$ \mu = \int _{\mathcal{M}_m}\mu_\omega d\nu(\omega)$$
and in particular
$$h(a) = \mu[a] = \int_{\mathcal{M}_m}K([a], \omega)d\nu(\omega).$$
\item
Since  $\sum_{b}P(a,b)=1$, for every $a\in S$, $1$ is a positive $P$-harmonic function. Let $\mu_{1}$ be the measure following Eq. (\ref{eq:func_to_measure}) with $h \equiv1$, and let $\nu_1$ the corresponding measure from Theorem \ref{thm:minimal_kernel_rep}. Fix $A \subseteq \mathcal{M}_m$, and let
$$\widetilde A = \{x\in X^{+} : T^n x \rightarrow A\}. $$
By definition,  
$$ \mu_{1}([a,a_{1},\dots, a_n]) = {\Pr}_{a}[Z_{1}=a_1,\dots Z_{n}=a_n].$$
Therefore, for any event $B \subseteq X^{+}$, 
$$\mu_{1}([a]\cap B) = {\Pr}_{a}[B]. $$
According to Theorem \ref{thm:conv_to_boundary}, for every $\omega\in \mathcal{M}_m$, $$\mu_{\omega}([a]\cap \widetilde A)=\begin{cases}\mu_{\omega}([a]) & \omega \in A \\
0 & o.w. \\
\end{cases}=\mu_{\omega}([a]) 1_{A}(\omega). $$
Thus, 
\begin{align*}
{\Pr}_{a}[\lim_{n\rightarrow\infty} T^{n }Z \in A]=&\mu_{1}([a]\cap \widetilde A)\\
=& \int_{\mathcal{M}_m}  \mu_\omega([a] \cap \widetilde A)d\nu_1(\omega)\\
=& \int_{\mathcal{M}_m}  \mu_\omega([a] )1_{A}(\omega)d\nu_1(\omega)\\
=& \int_{A}  K([a],\omega)d\nu_1(\omega). 
\end{align*}
\end{enumerate}
\end{proof}

\appendix 
\section{Appendix}
\begin{lem}
\label{lemma:G_continuous}
Let $(X^{+},T)$ be a transitive locally compact one-sided TMS and let $\phi$ be a $\lambda$-transient potential function with summable variations. Then, for every $f\in C_c(X)$ the Green's function $G(f, x|\lambda)$ and the Martin kernel $K(f,x|\lambda)$ are continuous in $X^+$.
\end{lem}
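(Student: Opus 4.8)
The plan is to realize $G(f,\cdot|\lambda)$ as a locally uniform limit of continuous functions. Writing $G(f,x|\lambda)=\sum_{n\ge 0}\lambda^{-n}(L_\phi^n f)(x)$, I would first argue that each summand $x\mapsto (L_\phi^n f)(x)$ is continuous and then establish that the series converges uniformly on every cylinder; continuity of $G(f,\cdot|\lambda)$ on each cylinder (hence on all of $X^+$) then follows.

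For the continuity of the individual summands, fix $f\in C_c(X^+)$ and cover $supp(f)$ by finitely many cylinders $[b_1],\dots,[b_N]$. A preimage $y$ with $T^n y=x$ contributes to $(L_\phi^n f)(x)$ only if $(y)_0\in\{b_1,\dots,b_N\}$, and since local compactness forces every state to have finite out-degree, there are only finitely many admissible forward paths of length $n$ from a given first symbol. Hence $(L_\phi^n f)(x)$ is a finite sum whose index set depends only on $(x)_0$, and each term $e^{\phi_n(wx)}f(wx)$ (where $wx$ denotes the concatenation of an admissible word $w$ with $x$) is continuous in $x$ because $\phi$ and $f$ are continuous; summable variations guarantees continuity of $\phi$. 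Thus $L_\phi^n f\in C(X^+)$.

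The heart of the matter is a uniform tail bound, which I would obtain from a bounded-distortion estimate in the spirit of Lemma \ref{lemma:kernel_boundness}. It suffices to treat $f=1_{[b]}$, since $|f|\le\|f\|_\infty\sum_i 1_{[b_i]}$. Fix a cylinder $[c]$ and a reference point $x_c\in[c]$. For any $x\in[c]$ and any admissible word $w=(b,w_1,\dots,w_{n-1})$ with $\mathbb{A}_{w_{n-1},(x)_0}=1$, the shifted points $T^i(wx)$ and $T^i(wx_c)$ agree on at least their first $n-i$ coordinates (they share the $w$-block and, since $x,x_c\in[c]$, the symbol $(x)_0=(x_c)_0$), so that $|\phi(T^i(wx))-\phi(T^i(wx_c))|\le Var_{n-i+1}(\phi)$. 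Summing over $0\le i<n$ telescopes to $|\phi_n(wx)-\phi_n(wx_c)|\le\sum_{m=2}^{n+1}Var_m(\phi)\le V$, where $V:=\sum_{m\ge 2}Var_m(\phi)<\infty$. Because $x$ and $x_c$ share $(x)_0$, the two sums defining $(L_\phi^n 1_{[b]})(x)$ and $(L_\phi^n 1_{[b]})(x_c)$ run over the same words $w$, so this distortion estimate gives $(L_\phi^n 1_{[b]})(x)\le e^{V}(L_\phi^n 1_{[b]})(x_c)$ for all $x\in[c]$ and all $n$. Consequently $\sum_{n\ge M}\lambda^{-n}(L_\phi^n 1_{[b]})(x)\le e^{V}\sum_{n\ge M}\lambda^{-n}(L_\phi^n 1_{[b]})(x_c)$, and the right-hand side tends to $0$ as $M\to\infty$ by $\lambda$-transience (finiteness of $G(1_{[b]},x_c|\lambda)$), uniformly in $x\in[c]$.

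Combining the two steps, $G(f,\cdot|\lambda)$ is continuous on every cylinder, hence on $X^+$. For the Martin kernel $K(f,x|\lambda)=G(f,x|\lambda)/G(1_{[o]},x|\lambda)$ I would note that the denominator is continuous by the same argument and is strictly positive: transitivity makes $d_{graph}(o,(x)_0)$ finite, so there is a preimage of $x$ whose first symbol is $o$, contributing a strictly positive term to $G(1_{[o]},x|\lambda)$. Therefore $K(f,\cdot|\lambda)$ is a ratio of continuous functions with a non-vanishing denominator, and is continuous. The main obstacle is the uniform tail estimate; the only delicate point there is the coordinate bookkeeping that makes the variation sum telescope to the finite constant $V$, which relies crucially on $x$ and $x_c$ sharing their first symbol.
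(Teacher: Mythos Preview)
Your argument is correct. The bounded-distortion estimate you isolate is exactly the right mechanism, and your packaging via locally uniform convergence (Weierstrass M-test on each cylinder) is clean and self-contained. The minor index bookkeeping in the variation sum is slightly off---since $T^i(wx)$ and $T^i(wx_c)$ actually agree on $n-i+1$ coordinates (the $w$-tail plus the shared symbol $c$), the bound is $Var_{n-i+2}(\phi)$ rather than $Var_{n-i+1}(\phi)$---but this only improves the estimate and the final sum is still dominated by $V=\sum_{m\ge 2}Var_m(\phi)$.

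The paper's route is organized differently. Rather than proving uniform convergence of the partial sums, it applies the same distortion inequality \emph{termwise to the full series at once}: for $d(x,y)<2^{-N}$ (with $N$ larger than the length of the cylinder word $w$) one gets directly $G(1_{[w]},x)=e^{\pm\sum_{k\ge N}Var_k(\phi)}G(1_{[w]},y)$, which is a log-H\"older estimate for $G(1_{[w]},\cdot)$. This handles cylinder indicators. For general $f\in C_c(X^+)$ the paper then passes through $K$ first: it approximates $f$ in sup-norm by finite combinations of cylinder indicators and invokes the uniform bound of Lemma~\ref{lemma:kernel_uniform_boundness} to control the approximation error in $K(f,\cdot)$, finally recovering $G(f,\cdot)=K(f,\cdot)\,G(1_{[o]},\cdot)$. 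Your approach avoids Lemma~\ref{lemma:kernel_uniform_boundness} and the sup-norm approximation entirely, at the cost of not producing an explicit modulus of continuity; the paper's direct ratio estimate on $G(1_{[w]},\cdot)$ yields that explicit modulus, which is convenient later when extending the kernel to the Martin boundary.
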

\begin{proof}
Assume w.l.o.g. that $\lambda=1$.

 Assume first that $f = 1_{[b_1,\dots, b_m]}$ for some admissible $b_1,\dots, b_m\in S$. Let $a\in S$ and let $x,y\in [a]$ with $d(x,y) < 2^{-N}$ namely $(x)_i = (y)_i$ for all $0\leq i\leq N$.
Given $a_0,\dots, a_n\in S$ admissible with $a_n=a$,
$$e^{\phi_n(a_0,\dots, a_{n-1}x)} =e^{\phi_n(a_0,\dots, a_{n-1}y)\pm \sum_{k=N}^{N+n}Var_k(\phi)} = e^{\phi_n(a_0,\dots, a_{n-1}y)\pm \sum_{k=N}^{\infty}Var_k(\phi)}. $$  
Moreover, when  $N> m$, 
$$1_{[b_1,\dots, b_m]}(a_0, \dots, a_{n-1}x)=1_{[b_1,\dots, b_m]}(a_0, \dots, a_{n-1}y) $$
and 
\begin{align*} &e^{\phi_n(a_0,\dots, a_{n-1}x)}1_{[b_1,\dots, b_m]}(a_0, \dots, a_{n-1}x)\\ &=e^{\pm \sum_{k=N}^{\infty}Var_k(\phi)}e^{\phi_n(a_0,\dots, a_{n-1}y)}1_{[b_1,\dots, b_m]}(a_0, \dots, a_{n-1}y). 
\end{align*}
Therefore, for all $N> m$, 
$$G(1_{[b_1,\dots, b_m]}, x) = e^{\pm \sum_{k=N}^{\infty}Var_k(\phi)}G(1_{[b_1,\dots, b_m]},y). $$
In particular, $G(1_{[b_1,\dots, b_m]}, x)$ is continuous.

Since $X^+$ is topologically transitive, the Green's function  $G(1_{[o]},x)$ is always non-zero and  the Martin kernel $K(1_{[w]},\cdot) = \frac{G(1_{[w]}, \cdot)}{K(1_{[o]},\cdot)}$ is a continuous function, for all admissible word $w\in S^*$.
 
Let  $f\in C_c^+(X^+)$ be an arbitrary non-zero. 
 Since $supp(f)$ is compact, there exist  $a_{i_1},\dots, a_{i_M}\in S $ s.t. $supp(f) \subseteq \cup_{j} [a_{i_j}]$. Moreover, since the collection $\{1_{[w]}\}_{w\in S^*}$ spans linearly a dense subset of $C_c(X^{+})$ w.r.t. the sup-norm $||\cdot||_\infty$, for every $\epsilon>0$ we can find $w_1,\dots, w_N\in S^*$ and $\alpha_1,\dots, \alpha_N\in \mathbb{R}^+$ s.t. $\cup_{i=1}^{N}[w_i] \subseteq \cup_{j=1}^M [a_{i_j}]$ and 
$$||f - \sum_{i=1}^N \alpha_i1_{[w_i]}||_\infty < \epsilon. $$
Then, for some constant $C=C(f)>0$,
$$||   K(f,\cdot) -   K(\sum_{i=1}^N \alpha_i1_{[w_i]},\cdot)||_\infty \leq \epsilon||  K(1_{\cup_{j} [a_{i_j}]},\cdot)||_{\infty} \leq \epsilon \sum_{j=1}^M C_{a_{i_j}}\leq \epsilon \cdot C $$
where $C_{a}$ is the constant from Lemma \ref{lemma:kernel_boundness}. Fix $x\in X^+$. Since the function $K(\sum_{i=1}^N \alpha_i1_{[w_i]},\cdot)$ is continuous, there exists $\delta>0$ s.t. for all $y\in X^+$ with $d(x,y)<\delta$,
$$|K(\sum_{i=1}^N \alpha_i1_{[w_i]},x) - K(\sum_{i=1}^N \alpha_i1_{[w_i]},y)| < \epsilon. $$
Then, for all such $y$,  
\begin{align*}|K(f,x) - K(f,y)| \leq  & |   K(f,x) -   K(\sum_{i=1}^N \alpha_i1_{[w_i]},x)|  \\
+& |   K(f,y) -   K(\sum_{i=1}^N \alpha_i1_{[w_i]},y)|  \\
+& |K(\sum_{i=1}^N \alpha_i1_{[w_i]},x) - K(\sum_{i=1}^N \alpha_i1_{[w_i]},y)| \\
\leq& (2C+1)\epsilon.   
\end{align*}
This implies that  $K(f,\cdot)$ is continuous, for all $f\in C_c(X^+)$. 

Lastly, since the Green's function  $G(f,\cdot )$ is a multiplication of two continuous functions,  $G(f,\cdot ) = K(f,\cdot)G(1_{[o]},\cdot)$, it is a continuous function as well.

\end{proof}
\begin{lem}   
\label{lemma:positive_on_cylinders}
Let $(X^{+},T)$ be a transitive locally compact one-sided TMS and let $\phi$ be a continuous potential function. Then, for every non-zero $\mu\in \conf(\lambda)$ and $a\in S$, $\mu([a])>0$.  \end{lem}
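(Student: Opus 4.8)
The plan is to exploit the conformality identity $\mu(L_\phi^n g)=\lambda^n\mu(g)$ together with the fact that a suitable iterate of the Ruelle operator applied to $1_{[a]}$ is strictly positive on any cylinder reachable from $a$. Recall that conformality, stated for indicators of compact sets, says that the Radon measures $L_\phi^*\mu$ and $\lambda\mu$ agree on all compact cylinders; since cylinders generate the Borel structure these measures coincide, so $\mu(L_\phi g)=\lambda\mu(g)$ for every $g\in C_c(X^+)$.

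First I would produce a single state carrying positive mass. Since $\mu\neq 0$, pick $f\in C_c^+(X^+)$ with $\mu(f)>0$; as $supp(f)$ is compact it is covered by finitely many (clopen, compact) cylinders $[b_1],\dots,[b_k]$, so $f\leq \|f\|_\infty\sum_i 1_{[b_i]}$ forces $\mu([b])>0$ for some $b=b_i$.

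Next, fix an arbitrary $a\in S$. By transitivity there is a path from $a$ to $b$, i.e. $M:=d_{graph}(a,b)<\infty$, realized by an admissible word $a=p_0,p_1,\dots,p_M=b$. The key observation is that $L_\phi^M 1_{[a]}$ is strictly positive on all of $[b]$: for $x\in[b]$ the point $y=(p_0,\dots,p_{M-1},x_0,x_1,\dots)$ is admissible (because $\mathbb{A}_{p_{M-1},b}=1$), satisfies $T^M y=x$ and $(y)_0=a$, and hence contributes the strictly positive term $e^{\phi_M(y)}$ to $(L_\phi^M 1_{[a]})(x)=\sum_{T^M y=x,\,(y)_0=a}e^{\phi_M(y)}$. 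Since $\phi$ is continuous and local compactness (assumption \ref{assumption:locally_compact}) gives finite out-degrees, each iterate $L_\phi^j 1_{[a]}$ lies in $C_c^+(X^+)$; in particular $L_\phi^M 1_{[a]}$ is continuous with compact support, so it attains a strictly positive minimum $\delta>0$ on the compact set $[b]$.

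Finally I combine these. Iterating the conformality identity (valid since each $L_\phi^j 1_{[a]}\in C_c^+$) gives
$$\lambda^M\mu([a])=\mu(L_\phi^M 1_{[a]})\geq \int_{[b]} (L_\phi^M 1_{[a]})\,d\mu\geq \delta\,\mu([b])>0,$$
and since $\lambda>0$ we conclude $\mu([a])>0$. There is no serious obstacle in this argument; the only points requiring care are confirming that $L_\phi^j 1_{[a]}$ remains in $C_c^+$ at every intermediate step (so that the conformality relation may legitimately be applied $M$ times) and checking that the support of $L_\phi^M 1_{[a]}$ genuinely reaches $[b]$, which is exactly the content supplied by transitivity.
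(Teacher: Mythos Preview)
Your proposal is correct and follows essentially the same route as the paper: find a state $b$ with $\mu([b])>0$, use transitivity to get an admissible path from $a$ to $b$, bound $L_\phi^M 1_{[a]}$ from below on $[b]$ by a positive constant (the paper isolates the single-preimage term $e^{\phi_M(a p_1\cdots p_{M-1}x)}$ explicitly, you take the infimum of the whole sum), and then apply the conformality identity. The only difference is presentational: you spell out why some $\mu([b])>0$ and why $L_\phi^j 1_{[a]}\in C_c^+(X^+)$ at each step, whereas the paper leaves these points implicit.
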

\begin{proof}
Let $a\in S$. Since $\mu\neq 0$, there is a state $b\in S$ s.t. $\mu([b]) >0$. Let $a_0,\dots, a_n\in S$ be an admissible word from $a_0=a$ to $a_n=b$. Then, 
\begin{align*}
\mu([a]) = &\ \lambda^{-n}\mu(L_\phi^n 1_{[a]})\\
\geq & \lambda^{-n}\int e^{\phi_n(aa_1\dots a_{n-1}x)}1_{T[a_{n-1}]}(x)d\mu(x)\\
\geq &  \lambda^{-n}\int e^{\phi_n(aa_1\dots a_{n-1}x)}1_{[b]}(x)d\mu(x)\\
\geq &\ \lambda^{-n}\min_{x\in [b]}\left(e^{\phi_n(aa_1\dots a_{n-1}x)}\right)\mu([b])>0.
\end{align*} 
\end{proof}

\begin{proof}[\bf{Proof of Proposition \ref{claim:compactification_prop}}]
Assume w.l.o.g. that $\lambda =1$.
\begin{enumerate}[label={(\arabic*)}]
\item 
Assume that $x_n \xrightarrow{d}x\in X^{+}$. Let $\epsilon>0$ and $m_{}$ be large enough s.t.
$$ \sum_{i\geq m} \frac{|  K(1_{[w_i]},x)-  K(1_{[w_i]},y)|+|1_{[w_i]}(x)-1_{[w_i]}(y)|}{2^i(C_{1_{[w_i]}}+1)}<\epsilon/2, \quad \forall x,y,\in X^{+}.$$
Since $  G(f,x)$ and $  K(f,x)$ are continuous functions of $x$ for fixed every $f\in C_c(X^{+})$ (see Lemma \ref{lemma:G_continuous}), we can find  $n$ large enough s.t.
$$ \sum_{i< m} \frac{|  K(1_{[w_i]},x_{n})-  K(1_{[w_i]},x)|+|1_{[w_i]}(x_{n})-1_{[w_i]}(x)|}{2^i(C_{1_{[w_i]}}+1)}<\epsilon/2.$$ Then $\rho(x_n, x) < \epsilon$  and $\{x_n\}$  converges to $x$ w.r.t. $\rho$.

Next, assume that $x_n\xrightarrow{\rho}x\in X^{+}$. Since for every finite word $w\in S^*$, there exists $C>0$ s.t. $$|1_{[w]}(x)-1_{[w]}(x_n)|\leq C \rho(x,x_n) \xrightarrow[n\rightarrow\infty]{} 0$$
and $|1_{[w]}(x)-1_{[w]}(x_n)| \in \{0,1\}$, we have that $1_{[w]}(x)=1_{[w]}(x_n)$ for $n$ large enough. Therefore, for every $m$, we can find $n$ large enough s.t. $(x_n)_i = (x)_i$ for every $1\leq i\leq m$, meaning $d(x_n,x) \leq 2^{-m}$.
\item 
If there exists a compact set $A\subseteq X^{+}$ s.t. $x_n\in A$ infinitely many times, then we can find a sub-sequence $x_{n_k}\in A$ with $x_{n_k}\xrightarrow[k\rightarrow\infty]{d}x\in A$. But then $x_{n_k}\xrightarrow[k\rightarrow\infty]{\rho}x\in A \subseteq X^{+}\  $ which contradicts the convergence of $x_n$ to a boundary point.

\item         
Let $\{x_n\}$ be an arbitrary sequence in $\widehat{X^+}$. By definition, $X^{+}$ is dense in $\widehat{X^+} $. If $\{x_n \} \cap \mathcal{  M} \neq \varnothing$, we replace every point $x_n\in \mathcal{  M}$ with some $x_n'\in X^{+}$ s.t. $\rho(x_n, x_n') < 1/n$. We will use diagonalization  argument to show that $\{x_n'\}$ has a Cauchy sub-sequence. Since $\rho(x_n, x_n')\leq 1/n$, the original sequence will have a Cauchy sub-sequence as well. Thus we can assume w.l.o.g. that $\{x_n\} \subseteq X^{+}$.

Let $f_i(x) = \frac{1}{2^i(C_{1_{[w_i]}}+1)}  K(1_{[w_i]},x|\lambda)$ and $g_i(x) =\frac{1}{2^i(C_{1_{[w_i]}}+1)}1_{[w_i]}(x) $. Then, 
$$\rho(x,y) = \sum_{i}\left(|f_i(x) - f_i(y)| + |g_i(x) - g_i(y)|\right). $$
Since $g_1$ and $f_1$ are bounded, we can find a sub-sequence $x_{n_k^1}$ s.t. $g_1(x_{n_{k}^1})$ and $f_1(x_{n_{k}^1})$ converge. Similarly, for every $m$ we can find $x_{n_k^m}$ a sub-sequence of $x_{n_k^{m-1}}$ s.t. $g_m(x_{n_k^m})$ and $f_m(x_{n_k^m})$ converge. Set $x_{n_k}=x_{n_k^{k}}$. We show that $x_{n_k^k}$ is a Cauchy sequence. Let $\epsilon>0$, and let $N$ be large enough s.t.
$$\sum_{i\geq N}\left (|f_i(x) - f_i(y)| + |g_i(x) - g_i(y)|\right) \leq \epsilon, \quad \forall x,y\in X^+.$$
Let $K$ be large enough s.t. for every $k_{1},k_2\geq K$ and every $i<N$
$$|f_i(x_{n_{k_1}}) - f_i(x_{n_{k_2}})| + |g_i(x_{n_{k_1}}) - g_i(x_{n_{k_1}})|\leq \epsilon/N.   $$
Then, for all $k_1,k_2\geq K$, 
$$\rho(x_{n_{k_1}},x_{n_{k_2}})\leq \sum_{i< N}\left (|f_i(x_{n_{k_1}}) - f_i(x_{n_{k_2}})| + |g_i(x_{n_{k_1}}) - g_i(x_{n_{k_2}})|\right) + \epsilon \leq 2\epsilon.  $$
Hence $\{x_{n_k}\}$ is a  $\rho$-Cauchy sequence and $\widehat {X^+}$ is compact. 

Next we show that  $\mathcal{M}$ is closed. Let $\omega_n \in \mathcal{M}_m$ be a sequence of boundary points which $\rho$-converges to a point $x\in \widehat{X^+}$. Assume by contradiction that $x\in X^+$. For every $n$, let $x_n \in X^+$ s.t. $\rho(x_n, \omega_n)\leq 1/n$. Then, $x_n \xrightarrow[]{\rho}x$ and by part  \ref{claim:compactification_prop_2} of the proposition  $x_n\xrightarrow[]{d}x$. By part \ref{claim:compactification_prop_escapes_compact} of the proposition, $x_n$ can be chosen so that $x_n \not\in[(x)_0]$ for all $n$, which contradicts the $d$-convergence of $x_n$ to to $x$.       

\item 
Assume that $A$ is a $d$-open subset of $X^{+}$. We show that $A^c =  \widehat{X^+}  \setminus A$ is $\rho$-closed. Let $x_n \in A^{c}$, $x_n\xrightarrow[]{\rho}x$. If $x\in \mathcal{  M}$, then clearly $x\not\in A$. If $x\in X^{+}$, since $X^+$ is $\rho$-open, for $n$ large enough $x_n \in X^+$ and $x_n\xrightarrow[]{d}x$ as well. Since $x_n \not\in A$ for all $n$, we must have that $x\not\in A$ and thus $A^{c}$ is indeed $\rho$-closed.

Next, assume that $A$ is $\rho$-open. We show that $B=A^{c}\cap X^{+}$ is $d$-closed. Let $x_n \in B$, $x_n\xrightarrow[]{d}x\in X^{+}$. Then, $x_n\xrightarrow[]{\rho}x$ as well. Since $x_n \not\in A $ and $A$ is a $\rho$-open set, $x\not \in A$, whence $x\in B$. 

\item By the construction of $\widehat{X^+} $ and $\rho$, for every $w\in S^*$,  $  K(1_{[w]},\cdot):X^+\rightarrow \mathbb{R}$ extends uniquely to a $\rho$-continuous function on $ \widehat{X^+} $. Let $f\in C_c(X^{+})$, and let $x_n\in X^{+}, \omega\in\mathcal{M}$ with $x_n\rightarrow x$. We show that the limit $  \lim_{n\rightarrow\infty}K(f,x_n)$ exists and does not depend on $x_n$ but only on $\omega$.  Since $supp(f)$ is compact, there exist  $a_{i_1},\dots, a_{i_M}\in S $ s.t. $supp(f) \subseteq \cup_{j} [a_{i_j}]$. Moreover, since the collection $\{1_{[w]}\}_{w\in S^*}$ spans linearly a dense subset of $C_c(X^{+})$ w.r.t. the sup-norm $||\cdot||_\infty$, for every $\epsilon>0$ we can find $w_1,\dots, w_N$ and $\alpha_1,\dots, \alpha_N\in \mathbb{R}^+$ s.t. $\cup_{i=1}^{N}[w_i] \subseteq \cup_{j=1}^M [a_{i_j}]$ and 
$$||f - \sum_{i=1}^N \alpha_i1_{[w_i]}||_\infty < \epsilon. $$
Then, for some constant $C=C(f)>0$,
$$||   K(f,\cdot) -   K(\sum_{i=1}^N \alpha_i1_{[w_i]},\cdot)||_\infty \leq \epsilon||  K(1_{\cup_{j} [a_{i_j}]},\cdot)||_{\infty} \leq \epsilon \sum_{j=1}^M C_{a_{i_j}}\leq \epsilon \cdot C $$
where $C_{a}$ is the constant from Lemma \ref{lemma:kernel_boundness}. 
From this we deduce that $  K(f,x_n)$ is a Cauchy sequence of real numbers  and that the limit $\lim_{n\rightarrow\infty}  K(f,x_n)$ exists. Let $K(f,\omega) = \lim_{n\rightarrow\infty}  K(f,x_n)$. Since the limit $K(1_{[w_i]},\omega) =\lim_{n\rightarrow\infty}K(1_{[w_i]},x_{n})$  does not depend on the choice of $x_n$, so is $K(f,\omega)$. 
\item
Since $  K(\cdot,x)$ defines a positive linear functional on $C_c^{+}(X^{+})$, we can apply the Riesz representation theorem to obtain a Radon measure $\mu_x$ s.t. $\mu_x(f) =   K(f,x)$ for all $f\in C_c(X^{+})$. Moreover, for every $x\in X^{+}$ and $f\in C_c(X^{+})$,
$$( L_\phi^* \mu_x )(f) =  K(L_\phi f, x) =   K(f,x) -\frac{f(x)}{  G(1_{[o]},x)}\leq   K(f,x)= \mu_x(f) $$
meaning $\mu_x$ is excessive. Observe that for all $a\in S$,  $supp(L_\phi 1_{[a]})=\cup_{b:\mathbb{A}_{a,b}=1}[b]$. In particular, since $\sum_{b\in S}\mathbb{A}_{a,b}<\infty$ the support of $L_\phi f$ is compact as well, so   $L_\phi f  \in C_c(X^{+})$  and  $  K(L_\phi f, \cdot)$ is $\rho$-continuous. Then, given a boundary point $\omega\in \mathcal{  M}$, if $x_n\xrightarrow[]{\rho}\omega$ then $x_n\xrightarrow[]{d}\infty$, whence $f(x_n)=0$ eventually. In particular, 
\begin{align*} (L_\phi^* \mu_\omega) (f) =&  K(L_\phi f, \omega) \\
=&\lim_{x\rightarrow\omega}   K(L_\phi f, x)\\
=&\lim_{x\rightarrow\omega}(  K(f,x) -\frac{f(x)}{  G(1_{[o]},x)})\\
=&\lim_{x\rightarrow\omega}  K(f,x)=  K(f,\omega) \end{align*}
 and $\mu_\omega$ is conformal.
We remark that $\mu_\omega([o])=  K(1_{[o]},\omega)=1$, so $\mu_\omega \neq 0$.
\end{enumerate}
\end{proof}

\begin{prop}
\label{prop:minimal_borel}
Let $(X^{+},T)$ be a transitive locally compact one-sided TMS and let $\phi$ be a $\lambda$-transient potential function with summable variations. Then, the minimal boundary $\mathcal{M}_m(\lambda)$ is a Borel set w.r.t. the weak$^*$-topology.
\end{prop}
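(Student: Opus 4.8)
The plan is to realise $\mathcal{M}_m(\lambda)$ as the $\Theta$-preimage of the set of extreme points of a metrizable compact convex set, and then invoke the classical fact that such extreme points form a $G_\delta$. Throughout I reduce to $\lambda=1$ and pass from the cone $\conf$ to its base
$$ B = \{\mu \in \conf : \mu([o]) = 1\}, $$
which is legitimate since $\mu([o])>0$ for every nonzero $\mu\in\conf$ by Lemma \ref{lemma:positive_on_cylinders}.

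First I would verify that $B$ is a convex, compact, metrizable subset of the positive Radon measures in the weak$^*$ (vague) topology. Convexity is immediate. For compactness I use that $\conf$ is weak$^*$-closed (as noted in the proof of Theorem \ref{thm:minimal_kernel_rep}) and that $\mu\mapsto\mu(1_{[o]})$ is weak$^*$-continuous, so $B$ is weak$^*$-closed; moreover, writing $\mu\in B$ via Theorem \ref{thm:boundary_rep_theorem} as $\mu(f)=\int_{\mathcal{M}}K(f,\omega)\,d\nu(\omega)$ with $\nu(\mathcal{M})=\mu([o])=1$ and applying Lemma \ref{lemma:kernel_uniform_boundness} gives the uniform bound $\mu(f)\le C_f$ for every $f\in C_c^+(X^+)$, so $B$ lies in a vaguely compact set of measures. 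Metrizability follows from separability of $C_c(X^+)$ (e.g. $\{1_{[w]}\}_{w\in S^*}$ spans a uniformly dense subspace): fixing a countable dense family $\{f_i\}$, the weak$^*$ topology on $B$ is metrized by $D(\mu,\mu')=\sum_i 2^{-i}|\mu(f_i)-\mu'(f_i)|/(1+|\mu(f_i)-\mu'(f_i)|)$.

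Next I would record that $\Theta:\mathcal{M}\to B$, $\omega\mapsto\mu_\omega$, is a homeomorphism onto a closed subset of $B$: it is continuous because each $K(f,\cdot)$ extends continuously to $\widehat{X^+}$ (Proposition \ref{claim:compactification_prop}), injective because $\mu_{\omega_1}=\mu_{\omega_2}$ forces $\rho(\omega_1,\omega_2)=0$ (the remark before Lemma \ref{lemma:extremal_points}), and since $\mathcal{M}$ is compact (Proposition \ref{claim:compactification_prop}) a continuous injection from a compact space into a Hausdorff space is a homeomorphism onto its (compact, hence closed) image. The key identification is $\Theta(\mathcal{M}_m)=\mathrm{ext}(B)$: a normalized measure is an extreme point of the base $B$ if and only if it generates an extremal ray of $\conf$ (if $\mu=t\mu_1+(1-t)\mu_2$ with $\mu_i\in B$, $t\in(0,1)$, then $t\mu_1,(1-t)\mu_2\in\conf$ sum to $\mu$, so extremality gives $t\mu_1\propto\mu$, and evaluating at $[o]$ forces $\mu_1=\mu$; the converse is the same computation with $t_i=\mu_i([o])$), whence $\Theta$ carries $\mathcal{M}_m$ into $\mathrm{ext}(B)$, while the reverse inclusion $\mathrm{ext}(B)\subseteq\Theta(\mathcal{M})$ is precisely Corollary \ref{cor:extremal_points}.

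Finally I would apply the classical lemma that the extreme points of a metrizable compact convex set form a $G_\delta$. With $d$ a metric on $B$, the sets
$$ F_n = \{\mu\in B : \mu=\tfrac12(\mu_1+\mu_2),\ \mu_1,\mu_2\in B,\ d(\mu_1,\mu_2)\ge 1/n\} $$
are closed by compactness, and $B\setminus\mathrm{ext}(B)=\bigcup_n F_n$, so $\mathrm{ext}(B)=\bigcap_n(B\setminus F_n)$ is $G_\delta$, hence Borel. Pulling back through the homeomorphism $\Theta$ shows $\mathcal{M}_m=\Theta^{-1}(\mathrm{ext}(B))$ is a $G_\delta$, hence Borel, subset of $\mathcal{M}$; equivalently it is Borel in the weak$^*$-topology under the identification $\omega\leftrightarrow\mu_\omega$. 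I expect the only genuinely substantive point to be the compactness and metrizability of $B$ — in particular extracting the uniform bound $\mu(f)\le C_f$ that makes $B$ vaguely compact — since the $G_\delta$ property of extreme points and the identification $\Theta(\mathcal{M}_m)=\mathrm{ext}(B)$ then follow formally from results already in hand.
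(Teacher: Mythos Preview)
Your proposal is correct and follows essentially the same route as the paper: both pass to the base $B=\conf_o=\{\mu\in\conf:\mu([o])=1\}$, establish its weak$^*$-compactness via the uniform bound $\mu(f)\le C_f$ from Lemma~\ref{lemma:kernel_uniform_boundness}, metrize it, and then exhibit the non-extremal points as the countable union of the closed sets $F_n=\{\mu=\tfrac12(\mu_1+\mu_2):\ \mu_i\in B,\ d(\mu_1,\mu_2)\ge 1/n\}$, pulling back to $\mathcal{M}$ via the continuous injection $\omega\mapsto\mu_\omega$. The only cosmetic difference is that you package the last step as ``extreme points of a compact metrizable convex set form a $G_\delta$'' and pull back through the homeomorphism $\Theta$, whereas the paper writes out the same $F_N$ argument directly inside $\conf_o$ and checks by hand that a non-extremal $\mu_\omega$ can be rewritten as a genuine midpoint in $\conf_o$; your equivalence ``extreme point of $B$ $\Leftrightarrow$ extremal ray of $\conf$'' is exactly that computation.
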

\begin{proof}
The following arguments are inspired by the proof of Proposition 1.3 in \cite{phelps_2001}. 

Let $\conf_o(\lambda) = \{\mu\in \conf(\lambda) : \mu([o])=1\}$. First we show that $\conf_o(\lambda)$ is a compact set in the weak$^*$ topology. Observe that  Theorem \ref{thm:boundary_rep_theorem} and Lemma \ref{lemma:kernel_uniform_boundness} together imply that for all $\mu \in \conf_o(\lambda)$ and all $f\in C_c^+(X^+)$, \begin{equation}
\label{eq:uniform_measure_bound}
\mu(f)\leq C_f
\end{equation}
where $C_f$ is the bound from Lemma \ref{lemma:kernel_uniform_boundness}. In particular, this uniform bound and the fact that $1_{[o]}$  is a continuous functions in $X^+$ imply that every sequence in $\conf_o(\lambda)$ has a subsequence that converges  w.r.t. the weak$^*$ topology to a measure $\mu$ with $\mu([o])=1$.  Since $L_\phi f\in C_c(X^+)$ for all $f\in C_c(X^+)$ (see proof of part \ref{claim:compactification_prop_6} in Proposition \ref{claim:compactification_prop}), the limiting measure $\mu$ is a conformal measure as well and thus $\conf_o(\lambda)$ is  a compact set.

 We define a metric on $\conf_o(\lambda)$: for $\mu_1,\mu_2\in \conf_o(\lambda)$, let 
$$ \delta(\mu_1, \mu_2) = \sum_{i=1}^\infty \frac{|\mu_1([w_i]) - \mu_2([w_i])|}{C_{1_{[w_i]}}2^i}$$
where $\{w_i\}$ is an enumeration of all admissible words over $S$ and $C_{1_{[w]}}$ is the bound from Lemma \ref{lemma:kernel_uniform_boundness}. The uniform bound in Eq. (\ref{eq:uniform_measure_bound}) implies that $\delta$ is always finite and thus it is indeed a metric.    

Let $$F_N = \left\{\mu \in \conf_o(\lambda) : \exists \mu_1,\mu_2\in \conf_o(\lambda)  \text{ s.t. } \mu = \frac{\mu_1 + \mu_2}{2} \text{ and }\delta(\mu_1, \mu_2) \geq \frac{1}{N}\right\}.$$
We show for all $N\in \mathbb{N}$, the set  $F_N$ is closed in the weak$^*$ topology. Let $\mu_n\in F_{N}$ s.t. $\mu_n \rightarrow \mu$ and let $\mu_1^n,\mu_2^n\in\conf_o(\lambda)$ s.t. $\mu_n = \frac{\mu_1^n + \mu_2^n}{2}$. Since $\conf_o(\lambda)$ is compact, there exist  converging sub-sequences $\mu_1^{n_k}, \mu_2^{n_k}$ to $\mu_1,\mu_2\in \conf_o(\lambda)$ respectively and $\mu = \frac{\mu_1 + \mu_2}{2}\in \conf_o(\lambda)$. To show that $\delta(\mu_1,\mu_2)\geq \frac{1}{N}$, observe that for all $\mu_1', \mu_2'\in \conf_o(\lambda)$,  
$$\sum_{i> m} \frac{|\mu'_1([w_i]) - \mu'_2([w_i])|}{C_{1_{[w_i]}}2^i} \leq \sum_{i>m}2^{-i+1} = 2^{-m+1}.$$
Then, since $1_{[w_i]}$ is continuous for all $i$, 
\begin{align*}
\sum_{i= 1}^m \frac{|\mu_1([w_i]) - \mu_2([w_i])|}{C_{1_{[w_i]}}2^i} 
=  &\lim_{k\rightarrow\infty} \sum_{i=1}^m \frac{|\mu_1^{n_k}([w_i]) - \mu_2^{n_k}([w_i])|}{C_{1_{[w_i]}}2^i}\\
\geq & \liminf_{k\rightarrow\infty} \sum_{i=1}^\infty \frac{|\mu_1^{n_k}([w_i]) - \mu_2^{n_k}([w_i])|}{C_{1_{[w_i]}}2^i} - 2^{-m+1} \\
\geq& \frac{1}{N}-2^{-m+1}.
\end{align*}
Passing to the limit as $m\rightarrow\infty$, we obtain that $\delta(\mu_1, \mu_2) \geq \frac{1}{N}$ and $F_N$ is indeed a closed set.

By definition, $\omega_n \xrightarrow[]{\rho}\omega$ iff $\mu_{\omega_n}\xrightarrow[]{\text{weak}^*}\mu_{\omega}$. Therefore the set $\{\omega \in \mathcal{M}(\lambda) : \mu_\omega \in F_N\}$ is $\rho$-closed and in particular a Borel set.
Thus to conclude the proof it suffices to show that 
$$\mathcal{M}_m(\lambda) = \mathcal{M}(\lambda) \setminus \bigcup_{N=1}^\infty \{\omega \in \mathcal{M}(\lambda) : \mu_\omega \in F_N\}. $$
Clearly $\{\omega \in \mathcal{M}(\lambda) : \mu_\omega \in F_N\} \cap \mathcal{M}_m(\lambda) = \varnothing$, so we have $\subseteq$. We prove $\supseteq$. 

Let $\omega \in \mathcal{M}(\lambda) \setminus \mathcal{M}_m(\lambda)$. We show that $\mu_\omega \in F_N$ for some $N>0$. By the construction of the Martin kernels, $\mu_\omega([o])=1$ and thus $\mu_\omega \in \conf_o(\lambda)$. Since $\mu_\omega$ is not extremal in $\conf(\lambda)$, we can find two non-proportional measures $\mu_1, \mu_2\in \conf(\lambda)$ s.t. $\mu = \mu_1 + \mu_2$. By Lemma \ref{lemma:positive_on_cylinders}, $\mu_i([o])>0$. Let $\mu'_i = \frac{1}{\mu_i([o])}\mu_i$. Notice that since $\mu_1$ and $\mu_2$ are not proportional, $\mu_1'$ and $\mu_2'$ are not proportional as well.  Now, 
$\mu = \mu_1([o])\mu_1' + \mu_2([o]) \mu_2'$.  Let $t = \mu_1([o])$. Then, we can write 
$$\mu = \frac{t \mu_1' + (1-t) \mu}{2} + \frac{(1-t) \mu_2' + t \mu}{2}. $$ 
Let $\mu_1 '' = t \mu_1' + (1-t) \mu$ and let $\mu''_2 =(1-t) \mu_2' + t \mu$. Clearly $\mu_1'',\mu_2'' \in \conf_o(\lambda)$. They are different, otherwise $\mu_1$ and $\mu_2$ would be proportional to $\mu$. Therefore, there exists $N>0$ s.t. $\delta(\mu_1'', \mu_2'') \geq \frac{1}{N}$ and $\mu_\omega \in F_N$. 
\end{proof}
\begin{proof}[\bf{Proof of Proposition \ref{prop:dissipative_on_compacts}}]
Assume w.l.o.g. that $\lambda=1$ and assume first that $F=[a]$ for some $a\in S$. Since $G(1_{[a]},x)$ is continuous,
it is bounded on compacts and thus, for every $b\in S$,
$$ \mu\left(1_{[b]}\sum_{n\geq0}L_\phi^n 1_{[a]}\right)<\infty.$$
Since $\mu$ is conformal, $\mu(f L_\phi g) = \mu((f\circ T) \cdot g)$, $\forall f,g\in C_c(X^+)$. Hence, 
$$ \mu\left(1_{[b]}\sum_{n\geq0}L_\phi^n 1_{[a]}\right)=\mu\left(1_{[a]}\sum_{n\geq0} 1_{[b]}\circ T\right)<\infty.$$
In particular, $1_{[a]}\sum_{n\geq0} 1_{[b]}\circ T<\infty$ almost-surely, whence for $\mu$-a.e. $x\in [a]$,  $T^nx \in [b]$ finitely many times.

Assume now that $F$ is arbitrary compact. Then, there exist $a_1,\dots, a_N\in S$ s.t. $F \subseteq \cup_{i=1}^N [a_i]$. Observe that in order to return infinitely many times to $F$ we must return infinitely many times to one of the cylinders $[a_i]$. Therefore,  $F_{\infty} \subseteq\cup_{i=1}^N [a_i]_{\infty}$ and $\mu(F_{\infty}) \leq \sum_{i=1}^N \mu([a_i]_{\infty})=0$.
\end{proof}

\begin{proof}[\bf{Proof of Proposition \ref{claim:left_right_transience}}]
Assume w.l.o.g. that  $\lambda=1$. By symmetry, we only show that if $\phi^+$ is transient then $\phi^-$ is transient as well. In particular, we show that there exists a non-zero function $f\in C_c^{+}(X^-)$ and a point $y\in X^-$ s.t. 
$$ \sum_{n=0}^\infty L_{\phi^-}^n (f)(x)<\infty.$$
 
Fix $a\in S$,  $x\in X^+\cap T[a], y\in X^-\cap T^{-1}[a]$. For every admissible word $(a_1,\dots,a_n)$ with $a_1=a_{n}=a$, we write  $$z=(\dots, (y)_{-1}\dot {,(y)}_{0},a_{1},\dots, a_n,(x)_{0},(x)_1,\dots).$$  
 Let $C_{1} = \max_{z'\in [a]}|\psi(z')|$ and $C_{2} = \max_{z'\in T [a]}|\psi(z')|$. Then,\begin{align*} |\psi(Tz) |=|& \psi(y\dot a_{1},\dots, a_nx)  |\leq C_{1}  
\end{align*}
$$ |\psi(T^{n+1}z)| = |\psi(ya_{1},\dots, a_n\dot x)|\leq C_2 .$$
From the cohomology property in Eq. (\ref{eq:cohomo_eq}), 
$$\phi^+(a_{i},\dots, a_nx) - \phi^-(y a_{1},\dots, a_i) = \psi(T^{i}z) -\psi(T^{i+1}z).$$
In particular, 
\begin{align*}\phi^+_{n}(a_1,\dots, a_nx)  = &\phi_{n}^-(ya_1,\dots, a_n)+ \psi(Tz)- \psi(T^{n+1} z)\\
\geq &\phi_{n}^-(ya_1,\dots, a_n)- C_1 -C_2.
\end{align*}
Then, for every  $n\geq 2$ 
\begin{align*}
 L_{\phi^+}^n (1_{[a]})(x) = &  \sum_{\substack{a_1,\dots, a_n\\a_1=a}}e^{\phi^+_{n}(a_1,\dots,a_nx)}\\
\geq &
 \sum_{\substack{a_1,\dots, a_n\\a_1,a_n=a}}e^{\phi^+_{n}(a_1,\dots,a_nx)}\\
\geq &e^{-C_{1}-C_2} \sum_{\substack{a_1,\dots, a_n\\a_1,a_n=a}}e^{\phi_{n}^-(ya_1,\dots, a_n)  } \\
=& e^{-C_{1}-C_2+\phi^-(ya)} \sum_{\substack{a_2,\dots, a_n\\a_n=a}}e^{\phi_{n-1}^-(yaa_2,\dots, a_n)}\\
=& e^{-C_{1}-C_2+\phi^-(ya)}L_{\phi^-}^{n-1}(1_{[a]})(ya).
\end{align*}
This implies that
\begin{align*}
G(1_{[a]},x) \geq& e^{-C_{1}-C_2+\phi^-(ya)} \sum_{n\geq1}L_{\phi^-}^{n}(1_{[a]})(ya)\\
 =& e^{-C_{1}-C_2+\phi^-(ya)} \left(\sum_{n=0}^\infty G(1_{[a]},ya) - 1\right)
\end{align*}
and  $\phi^-$ is indeed transient.
\end{proof}

\begin{proof}[\bf{Proof of Proposition \ref{Prop-DLRisConformal}}]

Recall that for a non-singular $\nu$, there exists a Borel function $\phi'$ s.t. $\frac{d\nu}{d\nu\circ T} = \exp \phi'$, where $(\nu\circ T )(A) := \sum_{a\in S}\nu(T(A\cap [a]))$ and in particular $L_{\phi'}^*\nu = \nu$.  By Proposition \ref{Prop-ConformalIsDLR},  $\nu$ is a DLR state of $\phi'$. 
 Given two admissible words $\underline a = (a_1,\dots, a_n)$ and $\underline b=(b_1,\dots, b_n)$ of length $n$ with $a_n=b_n$, let 
$$ \vartheta_{\underline a,\underline b}:[\underline a]\rightarrow[\underline b], \; \vartheta_{\underline a,\underline b}(\underline a x_{n}^\infty ) = \underline b x_{n}^\infty$$
where $x_n^\infty=((x)_n, (x)_{n+1},\dots)=T^{n }x$.
The following claim is elementary, see \cite{petersen_1997,sarig_2015}.
\begin{claim}
\label{claim:DLR_property}
A non-singular  probability measure $\mu$ is a  DLR for measure for $\phi$ iff for every two admissible words $\underline a = (a_1,\dots, a_n)$ and $\underline b=(b_1,\dots, b_n)$ of length $n$ with $a_n=b_n$, $\text{for a.e. }x\in T^{-n}[a_n] $,
\begin{equation}
\label{eq:dlr_condition_2}
\frac{d\mu\circ\vartheta_{\underline a,\underline b} }{d\mu}=\exp \left(\sum_{k=1}^n\left((\phi(a_{k},\dots ,a_nx_n^\infty) - \phi(b_{k},\dots ,b_nx_n^\infty)\right)\right).
\end{equation}
\end{claim}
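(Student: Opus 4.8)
The plan is to rephrase both sides of the asserted equivalence in terms of the conditional distribution of the first $n$ coordinates given the tail, and then to read the Radon--Nikodym derivative off that distribution. Since $T^{-n}\mathscr{B}$ is the $\sigma$-algebra generated by the tail map $x\mapsto T^n x$, disintegrating $\mu$ over $T^{-n}\mathscr{B}$ produces, for a.e.\ tail $w$, a probability measure $\kappa_w$ supported on the countable set of admissible length-$n$ prefixes $\underline c$ with $\underline c\,w$ admissible, characterised by $\kappa_w(\underline c)=\mathbb{E}_\mu[1_{[\underline c]}\mid T^{-n}\mathscr{B}]$ evaluated where $T^n=w$. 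In this language the DLR condition is exactly $\kappa_w(\underline c)=e^{\phi_n(\underline c\,w)}/Z_n(w)$ with $Z_n(w)=\sum_{\underline c'}e^{\phi_n(\underline c'\,w)}=\sum_{T^n y=w}e^{\phi_n(y)}$. For two words with $a_n=b_n$ the holonomy $\vartheta_{\underline a,\underline b}$ is a bimeasurable bijection preserving the tail, and a direct disintegration computation identifies $\frac{d(\mu\circ\vartheta_{\underline a,\underline b})}{d\mu}$ with the ratio $\kappa_w(\underline a)/\kappa_w(\underline b)$ (the convention for $\mu\circ\vartheta_{\underline a,\underline b}$ being the one that matches the sign in (\ref{eq:dlr_condition_2})); non-singularity of $\mu$ ensures the measures involved are mutually absolutely continuous so that all these derivatives exist a.e. I would set up this dictionary first, either via a formal disintegration or, more elementarily, by testing the identities against indicators $1_{[\underline a]}\cdot(1_C\circ T^n)$ of tail-cylinder events, as in \cite{petersen_1997,sarig_2015}.

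Granting the dictionary, the forward implication is immediate: substituting the Gibbs form of $\kappa_w$ into $\kappa_w(\underline a)/\kappa_w(\underline b)$ cancels the common normalisation $Z_n(w)$ and leaves $\exp(\phi_n(\underline a\,w)-\phi_n(\underline b\,w))=\exp\sum_{k=1}^n(\phi(a_k\cdots a_n\,w)-\phi(b_k\cdots b_n\,w))$, which is the right-hand side of (\ref{eq:dlr_condition_2}).

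For the converse I would argue that the ratio hypothesis pins down $\kappa_w$. Restricting to words ending in a fixed terminal symbol $s$, the hypothesis gives $\kappa_w(\underline c)=C_s(w)\,e^{\phi_n(\underline c\,w)}$ for all admissible $\underline c$ terminating in $s$, where a priori the constant $C_s(w)$ may depend on $s$. The main obstacle is precisely to show $C_s(w)$ is independent of $s$: a single-level instance of the hypothesis only compares prefixes sharing a terminal symbol, so it does not by itself relate different terminal classes. I would remove this obstacle by passing to length $n+1$: appending the first symbol $t$ of $w$ to two prefixes $\underline a,\underline b$ with possibly different terminal symbols yields words $\underline a t,\underline b t$ that now share the terminal symbol $t$ and sit over the shifted tail $Tw$, so the length-$(n+1)$ instance of the hypothesis applies to them; since $\phi_{n+1}(\underline a\,w)-\phi_{n+1}(\underline b\,w)=\phi_n(\underline a\,w)-\phi_n(\underline b\,w)$ (the extra summands $\phi(w)$ cancel) and the disintegrations at levels $n$ and $n+1$ are consistent over the common tail, this forces $\kappa_w(\underline a)/\kappa_w(\underline b)=\exp(\phi_n(\underline a\,w)-\phi_n(\underline b\,w))$ for all compatible $\underline a,\underline b$, i.e.\ $C_s(w)$ is a single constant. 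Finally, $\kappa_w$ being a probability measure determines that constant as $1/Z_n(w)$, so $\kappa_w$ is the Gibbs kernel and $\mu$ is DLR. I expect the level-$n$-to-$(n+1)$ bootstrap connecting the terminal classes, together with the careful justification that the formal ratios coincide with genuine Radon--Nikodym derivatives under non-singularity, to be the delicate points; everything else is bookkeeping with Birkhoff sums.
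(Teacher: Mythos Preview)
The paper does not actually prove this claim: it states it as ``elementary'' and defers to the references \cite{petersen_1997,sarig_2015}. So there is no argument in the paper to compare against, and your proposal is filling in what the paper omits.

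Your argument is correct. The dictionary you set up --- disintegrating $\mu$ over $T^{-n}\mathscr{B}$ and identifying $\dfrac{d(\mu\circ\vartheta_{\underline a,\underline b})}{d\mu}$ with the ratio $\kappa_w(\underline a)/\kappa_w(\underline b)$ --- is exactly the right framework, and the forward implication is then immediate. For the converse, your observation that the level-$n$ hypothesis only relates prefixes with the same terminal symbol is the genuine issue, and your bootstrap to level $n+1$ resolves it cleanly: appending the first tail symbol $t=(w)_0$ to any two admissible prefixes $\underline a,\underline b$ compatible with $w$ produces length-$(n+1)$ words sharing the terminal $t$, and since the holonomy $\vartheta_{\underline a t,\underline b t}$ restricted to $[\underline b t]$ coincides with the map $\underline b\,w\mapsto\underline a\,w$, the level-$(n+1)$ hypothesis together with the telescoping $\phi_{n+1}(\underline a\,w)-\phi_{n+1}(\underline b\,w)=\phi_n(\underline a\,w)-\phi_n(\underline b\,w)$ yields the desired ratio at level $n$ for \emph{all} compatible pairs. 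One small point worth making explicit when you write this up: the equality $\kappa_{Tw}^{(n+1)}(\underline a t)/\kappa_{Tw}^{(n+1)}(\underline b t)=\kappa_w^{(n)}(\underline a)/\kappa_w^{(n)}(\underline b)$ follows from the tower property of conditional expectation (conditioning first on $T^{-(n+1)}\mathscr{B}$ and then further on the event $(x)_n=t$), and this is what justifies reading the level-$(n+1)$ Radon--Nikodym identity as a statement about $\kappa_w^{(n)}$.
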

Since $\mu$ is a DLR state both for $\phi$ and $\phi'$, Eq. (\ref{eq:dlr_condition_2}) implies that$$ \sum_{k=0}^{n-1}((\phi\circ T^k)( \underline a x_n^\infty) -(\phi\circ T^k)( \underline b x_n^\infty))  =  \sum_{k=0}^{n-1}((\phi'\circ T^k)( \underline a x_n^\infty) -(\phi'\circ T^k)( \underline b x_n^\infty))  ,\quad \nu-a.s. $$   
for every $\underline a, \underline b$ of length $n$ with $a_n=b_n$ and $\mu[\underline a]>0$.
 
Let $h = \phi - \phi'$.  We show by induction that for any such admissible $\underline a, \underline b$ and for $\nu$-a.e. $x\in X^+$, $h(\underline a x_{n}^\infty) = h(\underline b x_{n}^\infty)$. This will imply that $h$ is measurable w.r.t. the $\sigma$-algebra $\cap_{i=0}^{\infty}T^{-i}\mathscr{B}$. Clearly the statement is true for $n=0$. For $n>0$, let $\underline a, \underline b$ be two admissible words of length $n$ which ends with the same symbol. By the induction assumption, for every $1<k\leq n-1$, 
 $$ (h\circ T^k)(\underline ax_n^\infty) = (h\circ T^k)(\underline b x_n^\infty), \quad \nu-a.s. $$
 Therefore, for $\nu$-a.e. $x\in T^{-n}[a_n]$, 
 \begin{align*}&h(\underline a x_n^\infty) -  h(\underline b x_n^\infty) = \sum_{k=0}^{n-1}\left((h\circ T^k)(\underline a x_n^\infty) -(  h\circ T^k)(\underline b x_n^\infty)\right) \\&=\sum_{k=0}^{n-1}\left((\phi\circ T^k)(\underline ax_n^\infty ) -( \phi\circ T^k)(\underline bx_n^\infty)-(\phi'\circ T^k)(\underline ax_n^\infty ) + (\phi'\circ T^k)(\underline bx_n^\infty)\right)\\
&=0. 
\end{align*}
\end{proof}

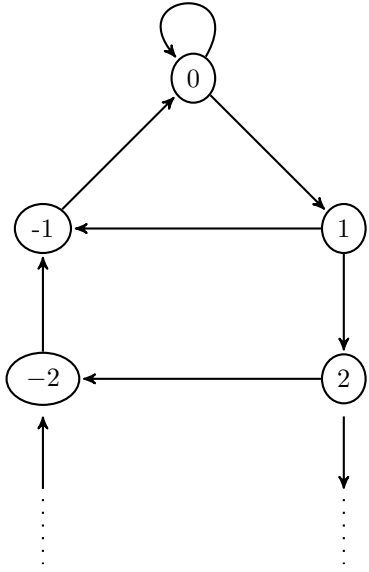
\begin{figure}[t!]
\label{figure:DLR_aperiodic_singular}
\begin{center}
\begin{tikzpicture}[-,>=stealth',shorten >=1pt,auto,
                    thick,main node/.style={circle,draw,font=\sffamily\Large\bfseries}]

  \node[ellipse,draw] at (0,0) (0) {$0$};
  \node[ellipse,draw] at (-2,-2) (n1) {-1};
  \node[ellipse,draw] at (-2,-4) (n2) {$-2$};
  \node[ellipse,draw] at (2,-2) (p1) {1};
  \node[ellipse,draw] at (2,-4) (p2) {$2$};

  \path[every node/.style={font=\sffamily\small}]
    (0) edge [thick, in=130,out=60,loop]  (0)
    (0) edge [->,thick]  (p1)    
    (n2) edge [->,thick]  (n1) 
    (n1) edge [->,thick]  (0)  
    (p1) edge [->,thick] (p2) 
    (p1) edge [->,thick] (n1) 
    (p2) edge [->,thick] (n2)

;

\draw [->, thick] (2,-4.5) -- (2,-5.5);
\draw [loosely dotted, thick] (2,-5.6) -- (2,-6.5);
\draw [<-, thick] (-2,-4.5) -- (-2,-5.5);
\draw [loosely dotted, thick] (-2,-5.6) -- (-2,-6.5);
 
\end{tikzpicture}
\end{center}
\caption{The state graph in Example 2.}
\end{figure}

\noindent\textbf{Example 2} (Non-conformal DLR measure).
\label{example:DLR_aperiodic_singular}
Consider $S=\mathbb{Z}$ and the transition matrix  
$$A_{a,b}=\begin{cases}
1 &    b=a+1 \text{ or } 
b=-a\leq 0  
\\
0 & otherwise \\
\end{cases}.$$
See Figure 2. It is easy to verify that the graph is locally finite, irreducible and aperiodic. Equip $X^+$ with any continuous potential function with summable variations $\phi:X^+\rightarrow\mathbb{R}$.   Let $x_0 = (0,1,2,3,4,5,\dots)$ and $\mu=\delta_{x_0}$. Clearly $\mu$ is a Radon measure. Since $\mu$ is supported on a single point and $\{y\in X^+:T^ny=T^nx_0\}=\{x_0\}$, Eq. (\ref{eq:dlr_condition}) holds trivially and  $\mu$ is a DLR\ measure. On the other hand, since $\mu([0])=1$ and $\mu(T^{-1}[0]) = \mu([0,0]\cup [-1,0])=0$, $\mu$ is non-conformal.

\section*{Acknowledgments}
I would   like to express my  gratitude  to my supervisor Prof. Omri Sarig for  the professional and moral support. I would like also  to thank the referees for the useful comments. This work is part of the author's Ph.D. dissertation at the Weizmann Institute of Science and  was also partially supported by Israel Science Foundation grants 199/14 and 1149/18.

%
\bibliography{bibfile}
\bibliographystyle{plain}

\end{document}